\newtheorem{theorem}{Theorem}
\newtheorem{lemma}{Lemma}
\newtheorem{hypothesis}{Hypothesis}
\newtheorem{corollary}{Corollary}
\newtheorem{proposition}[theorem]{Proposition}%
\theoremstyle{thmstyletwo}%
\newtheorem{example}{Example}%
\newtheorem{remark}{Remark}%
\theoremstyle{thmstylethree}%
\newtheorem{definition}{Definition}%
\DeclareMathOperator*{\rank}{rank}
\DeclarePairedDelimiter\abs{\lvert}{\rvert}
\def\R{\mathbb{R}}
\def\C{\mathbb{C}}
\newcommand{\mi}[1]{\mathbf{#1}}
\newcommand{\vet}[1]{\mathbf{#1}}
\newcommand{\cred}[1]{\textcolor{black}{#1}}
\begin{document}
\title{Symbol-based multilevel block $\tau$ preconditioners for multilevel block Toeplitz systems: GLT-based analysis and applications}


\author*[1]{\fnm{Sean Y.} \sur{Hon}}\email{seanyshon@hkbu.edu.hk}

\author[1]{\fnm{Congcong} \sur{Li}}\email{22482245@life.hkbu.edu.hk}

\author[2]{\fnm{Rosita L.} \sur{Sormani}}\email{rl.sormani@uninsubria.it}

\author[3]{\fnm{Rolf} \sur{Krause}}\email{rolf.krause@usi.ch}

\author[4]{\fnm{Stefano} \sur{Serra-Capizzano}}\email{s.serracapizzano@uninsubria.it}

\affil*[1]{\orgdiv{Department of Mathematics}, \orgname{Hong Kong Baptist University}, \orgaddress{\street{Kowloon Tong}, \city{Hong Kong SAR}}}

\affil[2]{\orgdiv{Department of Theoretical and Applied Sciences}, \orgname{University of Insubria}, \orgaddress{ \country{Italy}}}

\affil[3]{\orgdiv{Center for Computational Medicine in Cardiology}, \orgname{University of Italian Switzerland}, \orgaddress{\country{ Switzerland}}}

\affil[4]{\orgdiv{Department of Science and High Technology}, \orgname{University of Insubria}, \orgaddress{\country{ Italy}}}



\abstract{
In recent years, there has been a renewed interest in preconditioning for multilevel Toeplitz systems, a research field that has been extensively explored over the past several decades. This work introduces novel preconditioning strategies using multilevel $\tau$ matrices for both symmetric and nonsymmetric multilevel Toeplitz systems. Our proposals constitute a general framework, as they are constructed solely based on the generating function of the multilevel Toeplitz coefficient matrix, when it can be defined. We begin with nonsymmetric systems, where we employ a symmetrization technique by permuting the coefficient matrix to produce a real symmetric multilevel Hankel structure. We propose a multilevel $\tau$ preconditioner tailored to the symmetrized system and prove that the eigenvalues of the preconditioned matrix sequence cluster at $\pm 1$, leading to rapid convergence when using the preconditioned minimal residual method. The high effectiveness of this approach is demonstrated through its application in solving space fractional diffusion equations. Next, for symmetric systems we introduce another multilevel $\tau$ preconditioner and show that the preconditioned conjugate gradient method can achieve an optimal convergence rate, namely a rate that is independent of the matrix size, when employed for a class of ill-conditioned multilevel Toeplitz systems. Numerical examples are provided to critically assess the effectiveness of our proposed preconditioners compared to several leading existing preconditioned solvers, highlighting their superior performance.
}

\keywords{$\tau$ preconditioners, multilevel Toeplitz matrices, symmetrization, generalized locally Toeplitz sequences, Riemann-Liouville fractional diffusion equations
}

\pacs[MSC Classification]{65F08, 65F10, 65M22, 65Y05}

\maketitle

\section{Introduction}
Preconditioning for Toeplitz systems has been a long-standing research focus over the past few decades. A Toeplitz matrix $T_n(f)$ is defined from the Fourier coefficients of a Lebesgue integrable function $f$ and the properties of $f$ and those of $T_n(f)$ are strictly related. In particular, when $f$ is nonnegative almost everywhere (a.e.) and not identically zero, the matrix $T_n(f)$ is Hermitian positive definite for any $n$ with order of ill-conditioning depending on the order of the zeros of $f$, if they exist \cite{S-BIT94,extr1,extr2,BoGr}; see also \cite{serra_block1,serra_block2} for the block case i.e. for the case where the generating function is Hermitian nonnegative definite matrix-valued. If in addition $f$ is even a.e., then $T_n(f)$ is real symmetric and positive definite. For Hermitian positive definite Toeplitz matrices $T_n(f)$, the efficiency of preconditioned iterative solvers has been comprehensively studied. Various highly efficient preconditioners for $T_n(f)$ have been proposed, including circulant \cite{Strang1986, doi:10.1137/0909051} and $\tau$ \cite{Bini1990} preconditioners. For more detailed information, refer to the comprehensive expositions in \cite{MR2376196, MR2108963, Chan:1996:CGM:240441.240445, GaroniCapizzano_one, GaroniCapizzano_two} and the references therein, regarding preconditioning, multigrid, and spectral analysis, also in the block setting.

Much less work has been devoted to developing preconditioners in cases where the Toeplitz matrix $T_n(f)$ is real nonsymmetric, because the nonsymmetric character prevents a descriptive analysis of the convergence rate. A natural choice of preconditioned Krylov subspace solvers in this setting is the generalized minimal residual (GMRES) method. However, as extensively discussed in \cite{ANU:9672992}, a significant negative property of GMRES is that its preconditioning is largely based on heuristics and, in general, its convergence behaviours cannot be rigorously analyzed solely through the knowledge of the eigenvalues \cite{Greenbaum_1996}.

An interesting solution to this problem stemmed from \cite{doi:10.1137/140974213}, in which a symmetrization strategy that can circumvent the theoretical difficulty of GMRES was developed. More precisely, a nonsymmetric Toeplitz matrix $T_n(f)$ can be symmetrized by premultiplying it with the exchange matrix $Y_n\in \mathbb{R}^{n \times n}$ defined as
\[
Y_{n}=\begin{bmatrix}{}
& & 1 \\
& \iddots & \\
1 &  & \end{bmatrix},
\]
thus resulting in a symmetric matrix $Y_n T_n(f)$ (to be exact, a Hankel matrix) and allowing the use of the minimal residual (MINRES) method. Since the convergence behaviour of this method is related only to the eigenvalues, effective preconditioners can now be constructed by exploiting the available spectral information.

For the symmetrized Toeplitz system thus obtained, various preconditioning techniques have been subsequently proposed. The authors in \cite{doi:10.1137/140974213} considered absolute value circulant preconditioners, demonstrating that the spectra of the preconditioned matrix sequence \cred{cluster} at $\pm 1$. In this respect, using a different approach, we observe that the work \cite{serra_indef} is the first where such kind of cluster at $\pm 1$ is studied in the context of Toeplitz preconditioning. This preconditioner could potentially lead to fast MINRES convergence, but we will show in the numerical experiments that it is not effective in general. As studied in \cite{Hon_SC_Wathen}, circulant preconditioners cannot guarantee rapid convergence, especially in the ill-conditioned Toeplitz case since the preconditioned matrix may have eigenvalue outliers very close to zero, and it is well-known that matrix algebra preconditioners with unitary transforms cannot lead to optimal convergence in general in the multilevel Toeplitz case \cite{SeTy-nega1,SeTy-nega2,S-nega-gen,nega-spectr-eq}. As an alternative, following \cite{doi:10.1137/140974213}, Toeplitz preconditioners and band-times-circulant preconditioners were introduced in \cite{Pestana2019} and \cite{Hon_SC_Wathen,NT_BIT,NT_JCAM}, respectively. In particular, a band-Toeplitz preconditioner was then proposed in \cite{Hon_SC_Wathen} for symmetrized Toeplitz matrices whose generating functions have zeros of even order. In our case, the generating function has zeros of noninteger order, and hence this strategy is not applicable.

Recently, Pestana proposed two ideal Toeplitz preconditioners \cite{Pestana2019} for the linear system arising from the Riemann-Liouville (R.-L.) fractional problem considered here. Although these ideal preconditioners can lead to rapid MINRES convergence, their implementation is computationally expensive, which hinders their practicality as general preconditioners. Subsequently, an efficient multilevel $\tau$ preconditioner was developed in \cite{Hon_2024} for the same problem. Numerical examples therein demonstrated that such a preconditioner could achieve optimal MINRES convergence, but it remains an open question as to whether or not the preconditioning effect of this preconditioner can be generalized. Moreover, as will be shown in our numerical tests, this preconditioner fails to be efficient when the fractional order approaches one.

Starting from the theory of multilevel block generalized locally Toeplitz (GLT) matrix sequences \cite{GaroniCapizzano_one,GaroniCapizzano_two,GaroniCapizzano_three,GaroniCapizzano_four}, the main contribution of this work consists in general tools for the spectral analysis of nonpreconditioned and preconditioned matrix sequences and in the further development, in the multilevel setting, of the $\tau$ preconditioning approach introduced in \cite{Hon_2024}. Specifically, we demonstrate that an efficient multilevel $\tau$ preconditioner can be constructed based only on the complex-valued generating function of the underlying nonsymmetric multilevel Toeplitz matrix. The strategy can be extended to the block case by using a block $\tau$ algebra and in that setting we require that the generating function is matrix-valued and that the blocks are real and symmetric (such a requirement can be weakened to just Hermitian if the block circulant algebra is used). Furthermore, we show that the eigenvalues of the preconditioned matrix sequence are clustered at $\pm 1$ under suitable assumptions, which supports fast convergence when MINRES is employed. To illustrate the applicability of our proposed preconditioning strategy, we employ it for \cred{R.-L.} fractional diffusion equations.

Another goal of this paper is the development of efficient $\tau$ preconditioners for a class of ill-conditioned symmetric multilevel Toeplitz systems. Thanks to the symmetric character, we are able to prove that the eigenvalues of the preconditioned coefficient matrix remain bounded between two positive constants, 
so that the condition number is bounded from above and the convergence rate of the preconditioned conjugate gradient (PCG) method is optimal, i.e., it is independent of the matrix size.

The paper is organized as follows. In Section \ref{sec:prelim}, we review some preliminary definitions and results. In Section \ref{sec:properties-symmetrized}, we present the symmetrization strategy for nonsymmetric Toeplitz systems and develop the theory regarding their asymptotic spectral distribution, by exploiting Toeplitz and GLT technical tools. In Section \ref{sec:spectral_nonsymmetric}, we apply these results to the spectral analysis of a discretized fractional problem and construct our proposed $\tau$ preconditioners, proving the cluster at $\pm 1$ of the preconditioned matrix sequence. In Section \ref{sec:symmetric}, we proceed to develop a further $\tau$ preconditioner in the symmetric setting. Numerical examples are provided in Section \ref{sec:numerical} to demonstrate the performance of our preconditioners, to compare them with existing solvers as referenced in \cite{Pestana2019, Hon_2024, huang2021spectral}, and to support the theoretical results.

\section{Preliminaries on spectral tools}
\label{sec:prelim}
We start with a summary of the background knowledge necessary to understand the main contents of the paper. The reader who wishes to learn more can consult \cite{GaroniCapizzano_one,GaroniCapizzano_two,GaroniCapizzano_three,GaroniCapizzano_four}, from which we adopt many notations and terminology, and references therein.

\subsection{Notation}
In what follows, we extensively use multi-indices. The notation $\mathbf{n}$ represents a multi-index $\mi{n} = (n_1, n_2, \dots, n_k)$, where $k$ is fixed and each $n_j$ is a positive integer. The product of all its components is denoted by $N(\mathbf{n})=n_1n_2\ldots n_k$. The expression ${{\bf n}\to\infty}$ means that every component of the vector $\mathbf{n}$ tends to infinity, i.e., $\min_{1\le j\le k} n_j\to \infty$. Operations and relations are defined componentwise; for example, $\mathbf{n}\ge\mathbf{m}$ means that the inequality holds for every component of $\mathbf{n}$ and $\mathbf{m}$, while $\mathbf{0,1,2,\ldots}$ represent vectors of all zeros, ones, twos, and so on.

We also use the Kronecker tensor product for matrices, denoted with $\otimes$. We recall that the Kronecker product between matrices $A\otimes B$ is the block matrix of the form $\left(a_{i,j} B\right)$ with $A= \left(a_{i,j}\right)$.

Regarding the employed norms, we adopt the following notations: $\|\cdot\|$ is the spectral norm for matrices, i.e. the maximal singular value, $\|\cdot\|_{\text{tr}}$ is the trace norm for matrices, computed as the sum of all the singular values, and $\|\cdot\|_{L^1}$ is the standard $L^1$ norm for functions.

We denote by ${C}_c(\R)$ (resp., ${ C}_c(\C)$) the space of continuous complex-valued functions with bounded support defined on $\R$ (resp., $\C$). Whenever measurability is invoked, we always refer to the Lebesgue measure $\mu_k$ on $\R^k$, unless otherwise specified. Finally, we recall that the {\em essential range} of a measurable function $\psi:D\subseteq\R^t\to\C$ is its non negligible range, i.e., $\mathrm{ess\,im}\psi := \big\{z\in\C :\, \forall\epsilon>0 \;\; \mu_t \{ \psi\in B(z,\epsilon)\} > 0 \big\}$. If $\psi$ is real valued, the {\em essential infimum} is the infimum of the essential range, i.e., $\mathrm{ess\,inf}\psi := \inf \big(\mathrm{ess\,im}\psi\big)$, and the {\em essential supremum} $\mathrm{ess\,sup}\psi$ is defined in the same way.

\subsection{Distribution and clustering}
A central notion is that of distribution of a matrix sequence, both in the sense of eigenvalues and of singular values. By matrix sequence, we mean any sequence composed of square matrices of strictly increasing size, so that it tends to infinity. The most general case considered in this work involves matrix sequences of the form $\{A_{\mathbf{n}}\}_{\mathbf{n}}$, in which $A_{\mathbf{n}}$ are $m$-block matrices of size $mN(\mathbf{n})\times mN(\mathbf{n})$, with $\mathbf{n}$ varying in $\mathbb{N}^k$ and $m$ \cred{being} a fixed positive integer.

\begin{definition}{\rm (Distribution notions of a matrix sequence)} 
Let $\{A_{\mathbf{n}}\}_{\mathbf{n}}$ be a matrix sequence and let $f:D\subset \R^k \rightarrow \C^{m\times m}$ be a measurable matrix-valued function defined on a set $D$ with $0<\mu_k(D)<\infty$. We write that
\begin{enumerate}
\item $\{A_{\mathbf{n}}\}_{\mathbf{n}}$ has a (asymptotic) singular value distribution described by $f$, and we write $\{A_{\mathbf{n}}\}_{\mathbf{n}}\sim_\sigma f$, if for any $F\in C_c(\mathbb{R})$ it holds
\[
\lim_{\mathbf{n}\to \infty} \frac{1}{mN(\mathbf{n})}\sum_{j=1}^{mN(\mathbf{n})}F(\sigma_j(A_{\mathbf{n}}))=\frac{1}{\mu_k(D)}\int_D \frac{1}{m} \sum_{j=1}^{m} F( \sigma_j(f(\mathbf{x}))  )\,{\rm d}\mathbf{x}.
\]
\item $\{A_{\mathbf{n}}\}_{\mathbf{n}}$ has a (asymptotic) eigenvalue (or spectral) distribution described by $f$, and we write $\{A_{\mathbf{n}}\}_{\mathbf{n}}\sim_\lambda f$, if for any $F \in C_c(\mathbb{C})$ it holds
\[
\lim_{\mathbf{n}\to \infty} \frac{1}{mN(\mathbf{n})}\sum_{j=1}^{mN(\mathbf{n})}F(\lambda_j(A_{\mathbf{n}})) = \frac{1}{\mu_k(D)}\int_D \frac{1}{m} \sum_{j=1}^{m} F( \lambda_j(f(\mathbf{x}))  )\,{\rm d}\mathbf{x}.
\]
\end{enumerate}
\end{definition}
In the case where $f$ is continuous, the spectral distribution definition expresses the idea that a suitable ordering of the eigenvalues $\{\lambda_j(A_{\mathbf{n}})\}_{j=1,\ldots,mN(\mathbf{n})}$, assigned in correspondence with an equispaced grid on $D$, approximately reconstructs the $r$
surfaces $\mathbf x$ $\mapsto\lambda_j(f(\mathbf x)),\ j=1,\ldots,m$. When $m=1$, the eigenvalues are simply approximated by a uniform sampling of $f$. A similar concept is hidden behind the singular value distribution definition.

A noteworthy example, useful for the development of our preconditioners, is that of zero-distributed matrix sequences, in which the singular value distribution is described by the constant function $f=0$.

Next, we introduce the notion of clustering, which can be seen as a special case of distribution. For $z\in\C$ and $\varepsilon>0$, denote with $B(z,\varepsilon) = \{w\in\C:\,|w-z|<\varepsilon\}$ the open disk with center $z$ and radius $\varepsilon$, while $B(S,\varepsilon)= \bigcup_{z\in S}B(z,\varepsilon)$ is the $\varepsilon$-expansion of a set $S\subseteq\C$.

\begin{definition}
Let $S\subseteq\mathbb C$ be a nonempty and closed set. A matrix sequence $\{A_{\mathbf{n}}\}_{\mathbf{n}}$ is
\begin{itemize}
    \item {\em strongly clustered} at $S$ in the sense of eigenvalues if, for any $\varepsilon>0$ and as $n\rightarrow\infty$,
        \[ \#\big\{j\in\{1,\ldots,mN(\mathbf{n})\}: \lambda_j(A_{\mathbf{n}})\notin B(S,\varepsilon)\big\} = \mathcal{O}(1). \]
    In other words, the number of eigenvalues of $A_{\mathbf{n}}$ outside of $B(S,\varepsilon)$ is bounded by a constant independent of $\mathbf{n}$;
    \item {\em weakly clustered} at $S$ in the sense of eigenvalues if, for any $\varepsilon>0$ and as $n\rightarrow\infty$,
        \[ \#\big\{j\in\{1,\ldots,mN(\mathbf{n})\}: \lambda_j(A_{\mathbf{n}})\notin B(S,\varepsilon)\big\} = o(N(\mathbf{n})). \]
    In other words, the number of eigenvalues of $A_{\mathbf{n}}$ outside of $B(S,\varepsilon)$ is negligible with respect to the size of $A_{\mathbf{n}}$.
\end{itemize}
A similar definition is given for the singular values, with $S\subseteq [0,\infty)$.
\end{definition}

In the setting of preconditioning, it is especially important the case of spectral single point clustering, where $S$ is made up by a unique element $s$. In this respect, we notice that zero-distributed matrix sequences are precisely the sequences clustered at $s=0$ in the singular value sense, as a consequence of the following result \cred{\cite[Theorem 2.16]{GaroniCapizzano_four}}.

\begin{proposition} \label{prop:distrib-cluster-equiv}
    If $\{A_n\}_n\sim_\lambda\psi$, then $\{A_n\}_n$ is weakly clustered at the essential range of $\psi$ in the sense of the eigenvalues. Furthermore, if the essential range of $\psi$ is the singleton $\{s\}$ with $s\in\C$ fixed, 
then $\{A_n\}_n\sim_\lambda\psi$ if and only if $\{A_n\}_n$ is weakly clustered at $s$ in the sense of the eigenvalues. 
The same holds in the setting of singular values, with obvious minimal changes.
\end{proposition}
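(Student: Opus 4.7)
The plan is to separate the statement into two claims: (i) the distribution $\{A_{\mathbf{n}}\}_{\mathbf{n}} \sim_\lambda \psi$ implies weak clustering at the essential range $S$ of $\psi$; and (ii) when $S=\{s\}$, weak clustering at $s$ is also sufficient for $\{A_{\mathbf{n}}\}_{\mathbf{n}} \sim_\lambda \psi$. Claim (i) already handles one direction of the equivalence in the second sentence, so I would prove it first and then close with the converse for (ii).

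For (i), set $S = \mathrm{ess\,im}\,\psi$, understood in the spectral sense when $\psi$ is matrix-valued: $z \in S$ iff for every $\varepsilon>0$ the set $\{\mathbf{x}\in D : \exists\,j,\ \lambda_j(\psi(\mathbf{x}))\in B(z,\varepsilon)\}$ has positive measure. By the very definition of essential range, for almost every $\mathbf{x}\in D$ all eigenvalues $\lambda_j(\psi(\mathbf{x}))$ lie in $S$. Fix $\varepsilon>0$ and a large $R>0$, and construct $F\in C_c(\mathbb{C})$ with $0\le F\le 1$, $F\equiv 0$ on $B(S,\varepsilon/2)$, and $F\equiv 1$ on the compact set $\{z:|z|\le R\}\setminus B(S,\varepsilon)$. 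Feeding $F$ into the defining limit of $\sim_\lambda$, the right-hand integral vanishes because the integrand is zero a.e., while the left-hand Cesàro sum is bounded below by $(mN(\mathbf{n}))^{-1}\cdot\#\{j : \lambda_j(A_{\mathbf{n}})\in \{z:|z|\le R\}\setminus B(S,\varepsilon)\}$. Hence this count is $o(N(\mathbf{n}))$ for each fixed $R$, and a standard diagonal argument over a nested family of plateau functions with $R\to\infty$ yields $\#\{j:\lambda_j(A_{\mathbf{n}})\notin B(S,\varepsilon)\}=o(N(\mathbf{n}))$, which is precisely weak clustering at $S$.

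For the converse in (ii), assume $\{A_{\mathbf{n}}\}_{\mathbf{n}}$ is weakly clustered at the single point $s$ and that $\psi$ has essential range $\{s\}$. Then every eigenvalue of $\psi(\mathbf{x})$ equals $s$ for a.e.\ $\mathbf{x}$, so the right-hand side of the distribution formula reduces to $F(s)$ for every $F\in C_c(\mathbb{C})$. For the left-hand side, fix $\eta>0$ and, using continuity of $F$ at $s$, choose $\delta>0$ with $|F(\lambda)-F(s)|<\eta$ whenever $\lambda\in B(s,\delta)$; then split the sum over $j$ at this threshold. The indices with $\lambda_j(A_{\mathbf{n}})\in B(s,\delta)$ contribute within $\eta$ of $F(s)$ per term, while the $o(N(\mathbf{n}))$ remaining indices each contribute at most $2\|F\|_\infty<\infty$ by compactness of the support of $F$, producing a global error of $\eta+o(1)$. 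Sending $\eta\to 0$ finishes the proof.

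The main technical point I anticipate is in (i): passing $R\to\infty$ must be handled carefully, since weak clustering as stated also controls potentially unbounded outliers. Either a growth bound on $\lambda_j(A_{\mathbf{n}})$ is extracted from the GLT framework, or one uses a diagonal extraction along pairs $(\varepsilon_k,R_k)$ with $\varepsilon_k\to 0$ and $R_k\to\infty$; either way this is the one place that goes beyond a direct test with a single $F$. A minor issue is confirming that the scalar definition of essential range recalled in the preliminaries extends naturally to the spectral essential range in the matrix-valued case. The singular value analogue then follows by replacing $\lambda_j$ with $\sigma_j$, $\mathbb{C}$ with $\mathbb{R}$, and $C_c(\mathbb{C})$ with $C_c(\mathbb{R})$, and the nonnegativity of singular values eliminates the unboundedness subtlety on one side.
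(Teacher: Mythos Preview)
The paper does not supply its own proof of this proposition: it is stated as a quotation of \cite[Theorem~2.16]{GaroniCapizzano_four}, so there is nothing to compare against at the level of argument. Your sketch is essentially correct and follows the standard route.

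Two small comments. First, in part (i) the ``diagonal argument'' you invoke for $R\to\infty$ is more than you need. Once you know that, for each fixed $R$, the count of eigenvalues in $\{|z|\le R\}\setminus B(S,\varepsilon)$ is $o(N(\mathbf{n}))$, you control the tail $\#\{j:|\lambda_j(A_{\mathbf{n}})|>R\}$ by testing against a plateau function $F_R\in C_c(\mathbb{C})$ with $F_R\equiv 1$ on $B(0,R)$ and $0\le F_R\le 1$: the distribution relation gives $(mN(\mathbf{n}))^{-1}\sum_j F_R(\lambda_j(A_{\mathbf{n}}))\to c_R$ with $c_R\uparrow 1$ as $R\to\infty$, whence $\#\{j:|\lambda_j(A_{\mathbf{n}})|>R+1\}\le mN(\mathbf{n})(1-c_R+o(1))$. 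Adding the two pieces and taking $\limsup_{\mathbf{n}}$ before sending $R\to\infty$ finishes the job without any extraction. Second, your extension of ``essential range'' to the matrix-valued case is the right one (union of essential ranges of the eigenvalue functions), and is consistent with how the block GLT references treat it; you are right to flag it since the paper only records the scalar definition. The singular value version goes through verbatim with $C_c(\mathbb{R})$ in place of $C_c(\mathbb{C})$.
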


We conclude this subsection with the notion of approximating class of sequences and a key related result.

\begin{definition}{\rm (Approximating class of sequences)}
	Let $\{A_{\mathbf{n}}\}_{\mathbf{n}}$ be a matrix sequence and let $\{\{B_{{\mathbf{n}},j}\}_{\mathbf{n}}\}_j$ be a class of matrix sequences. We say that $\{\{B_{{\mathbf{n}},j}\}_{\mathbf{n}}\}_j$ is an \emph{approximating class of sequences (a.c.s.)} for $\{A_{\mathbf{n}}\}_{\mathbf{n}}$ if the following condition is met: for every $j$ there exist $\mathbf{n}_j, c(j), \omega(j)$ such that, for $  \mathbf{n}\ge\mathbf{n}_j$,
	\[ A_{\mathbf{n}}=B_{{\mathbf{n}},j}+R_{{\mathbf{n}},j}+N_{{\mathbf{n}},j} \]
    with
	\[ \rank R_{{\mathbf{n}},j}\leq c(j)N(\mathbf{n}), \qquad \|N_{\mathbf{n},j}\|\leq\omega(j), \]
	where $\mathbf{n}_j$, $c(j)$ and $\omega(j)$ depend only on $j$ and \[\lim_{j\to\infty}c(j)=\lim_{j\to\infty}\omega(j)=0.\]
\end{definition}
The definition above is enclosed in the notation
$\{\{B_{{\mathbf{n}},j}\}_{\mathbf{n}}\}_j \xrightarrow{\text{a.c.s.\ wrt\ $j$}}\{A_{\mathbf{n}}\}_{\mathbf{n}}$. The a.c.s. idea gives rise to a notion of convergence expressed by the following lemma (see Proposition 2.3 in \cite{algebra_Serra-Capizzano_2001} and also \cite{Tilli_Loc,GaroniCapizzano_one,GaroniCapizzano_two} for this and more general tools).
	
\begin{lemma}\label{lem:Corollary5.1}
Let $\{A_{\mathbf{n}}\}_{\mathbf{n}}$ be a matrix sequence and let $\{\{B_{{\mathbf{n}},j}\}_{\mathbf{n}}\}_j$ be a class of matrix sequences. Let $f,f_j:D \subset \mathbb{R}^k \to \mathbb{C}$ be measurable functions defined on a set $D$ with $0<\mu_k(D)<\infty$. Suppose that
\begin{enumerate}
    \item $\{\{B_{{\mathbf{n}},j}\}_{\mathbf{n}}\}_j \sim_{\sigma}  f_j$ for every $j$,
	\item $\{\{B_{{\mathbf{n}},j}\}_{\mathbf{n}}\}_j \xrightarrow{\text{a.c.s.\ wrt\ $j$}} \{A_{\mathbf{n}}\}_{\mathbf{n}}$,
	\item $f_j \to f$ in measure.
\end{enumerate}
Then,
\[ \{A_{\mathbf{n}}\}_{\mathbf{n}} \sim_{\sigma}  f. \]
Moreover, in the case where all the involved matrices are Hermitian, if the first assumption is replaced by $\{\{B_{{\mathbf{n}},j}\}_{\mathbf{n}}\}_j \sim_{\lambda} f_j$ for every $j$ and the other two are left unchanged, then  $\{A_{\mathbf{n}}\}_{\mathbf{n}} \sim_{\lambda}  f$.
\end{lemma}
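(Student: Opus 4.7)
The plan is to fix a test function $F \in C_c(\R)$ (respectively $F \in C_c(\C)$ in the Hermitian/eigenvalue case) and compare the empirical averages
\[
\Phi(A_{\mathbf{n}}) := \frac{1}{mN(\mathbf{n})} \sum_{i=1}^{mN(\mathbf{n})} F(\sigma_i(A_{\mathbf{n}})), \qquad I(g) := \frac{1}{\mu_k(D)}\int_D \frac{1}{m}\sum_{i=1}^{m} F(\sigma_i(g(\mathbf{x})))\,{\rm d}\mathbf{x},
\]
by inserting the auxiliary quantities $\Phi(B_{\mathbf{n},j})$ and $I(f_j)$ and applying the triangle inequality
\[
|\Phi(A_{\mathbf{n}}) - I(f)| \le |\Phi(A_{\mathbf{n}}) - \Phi(B_{\mathbf{n},j})| + |\Phi(B_{\mathbf{n},j}) - I(f_j)| + |I(f_j) - I(f)|.
\]
I would then take $\limsup_{\mathbf{n} \to \infty}$, control the three pieces separately using the a.c.s.\ hypothesis, assumption (1), and assumption (3), respectively, and finally let $j \to \infty$.

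For the first piece, the decomposition $A_{\mathbf{n}} = B_{\mathbf{n},j} + R_{\mathbf{n},j} + N_{\mathbf{n},j}$ combined with Weyl-type perturbation inequalities for singular values (rank-based interlacing for $R_{\mathbf{n},j}$ and the spectral bound for $N_{\mathbf{n},j}$) yields a reordering such that all but at most $c(j)N(\mathbf{n})$ indices $i$ satisfy $|\sigma_i(A_{\mathbf{n}}) - \sigma_i(B_{\mathbf{n},j})| \le \omega(j)$. Since $F$ has compact support, it is bounded by some $M$ and uniformly continuous with modulus $\omega_F$, so the exceptional indices contribute at most $2Mc(j)/m$ to $\Phi(A_{\mathbf{n}}) - \Phi(B_{\mathbf{n},j})$, while the remaining ones contribute at most $\omega_F(\omega(j))$. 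This produces the bound
\[
|\Phi(A_{\mathbf{n}}) - \Phi(B_{\mathbf{n},j})| \le \frac{2Mc(j)}{m} + \omega_F(\omega(j)),
\]
which is uniform in $\mathbf{n}$ and vanishes as $j\to\infty$ by the defining conditions of the a.c.s.

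The second piece tends to zero as $\mathbf{n}\to\infty$ for each fixed $j$ by assumption (1). For the third piece, convergence in measure $f_j \to f$ on the finite-measure domain $D$, together with the continuity of the singular value functionals acting on matrix-valued functions, implies $\sigma_i(f_j(\cdot)) \to \sigma_i(f(\cdot))$ in measure; extracting an a.e.\ convergent subsequence and invoking the dominated convergence theorem with dominating constant $M$ gives $I(f_j) \to I(f)$. Combining the three estimates and letting first $\mathbf{n}\to\infty$ and then $j\to\infty$ delivers $\Phi(A_{\mathbf{n}}) \to I(f)$, which is exactly $\{A_{\mathbf{n}}\}_{\mathbf{n}} \sim_\sigma f$.

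In the Hermitian/eigenvalue version, the identical triangle inequality applies with $\sigma_i$ replaced by $\lambda_i$; the only nontrivial modification is the estimate in the first piece, which now relies on Weyl's interlacing inequality for Hermitian eigenvalues under low-rank plus small-norm perturbation, and this is precisely why the Hermitian hypothesis is imposed there. The main obstacle of the whole argument is the clean bookkeeping of the double limit: the uniform-in-$\mathbf{n}$ control of the first piece (depending only on $c(j)$ and $\omega(j)$) must be established \emph{before} the two outer limits are interchanged, so that the $j$-dependent error does not reintroduce a hidden $\mathbf{n}$-dependence; once this uniformity is in place, the remainder of the argument is routine.
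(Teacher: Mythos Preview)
The paper does not actually prove this lemma; it is stated as a known result and attributed to Proposition~2.3 of \cite{algebra_Serra-Capizzano_2001} and the GLT monographs \cite{Tilli_Loc,GaroniCapizzano_one,GaroniCapizzano_two}. Your three-term splitting via the triangle inequality, followed by the double limit $\mathbf{n}\to\infty$ then $j\to\infty$, is precisely the standard argument given in those references, so at the structural level your proposal matches the literature proof.

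That said, there is a concrete error in your handling of the first piece. The claim that the rank-plus-norm decomposition ``yields a reordering such that all but at most $c(j)N(\mathbf{n})$ indices $i$ satisfy $|\sigma_i(A_{\mathbf{n}}) - \sigma_i(B_{\mathbf{n},j})| \le \omega(j)$'' is false: interlacing for a rank-$r$ perturbation only gives $\sigma_{i+r}(B)\le\sigma_i(B+R)\le\sigma_{i-r}(B)$, which permits \emph{all} singular values to shift. For instance, a rank-one positive semidefinite perturbation of $\diag(4,3,2,1)$ can have eigenvalues $(4,3.5,2.5,1.5)$, and no bijection matches more than one pair to within any tolerance below $0.5$; your asserted bound $|\Phi(A_{\mathbf{n}})-\Phi(B_{\mathbf{n},j})|\le 2Mc(j)/m + \omega_F(\omega(j))$ therefore does not follow. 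What \emph{does} follow from interlacing is the uniform counting-function estimate $\bigl|\#\{i:\sigma_i(B+R)>\alpha\}-\#\{i:\sigma_i(B)>\alpha\}\bigr|\le r$ for every $\alpha$; from this one bounds $|\Phi(B+R)-\Phi(B)|$ either by integrating by parts against $F'$ (when $F\in C_c^1$, picking up a factor $\|F'\|_{L^1}$) or by first approximating $F$ uniformly by a step function on $K$ intervals (picking up a factor $K$). The resulting bound still tends to zero with $c(j)$ uniformly in $\mathbf{n}$, so your double-limit bookkeeping survives, but the constant necessarily depends on $F$ in a more delicate way than you stated.
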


With reference to Lemma \ref{lem:Corollary5.1}, a weakening of the assumption on the Hermitian matrices for the eigenvalue distribution can be considered, as shown in \cite{barb non-h}. This is important e.g. when a \cred{first-order} differential operator is added to a \cred{second-order} operator.

\subsection{Multilevel block Toeplitz matrices}

We summarize the definition of multilevel block Toeplitz matrices and their most relevant properties. Consider the Banach space $L^1([-\pi,\pi]^k,\mathbb{C}^{m \times m})$ of all $m$-by-$m$ matrix-valued Lebesgue integrable functions over $[-\pi,\pi]^k$, equipped with the norm
\[
\|f\|_{L^1} = \frac{1}{(2\pi)^k}\int_{[-\pi,\pi]^k} \|f(\boldsymbol{\theta})\|_{\text{tr}}\,{\rm d} \boldsymbol{\theta} < \infty.
\]
Given $f:$~$[-\pi,\pi]^k\to \mathbb{C}^{m \times m}$ belonging to $L^1([-\pi,\pi]^k,\mathbb{C}^{m \times m})$, we denote with
\begin{equation}\label{multilevel_fenefun} \widehat{\mathbf{f}}_{\mathbf{j}}:=\frac{1}{(2\pi)^k}\int_{[-\pi,\pi]^k}f(\boldsymbol{\theta}){\rm e}^{\iota \left\langle { \bf j},\boldsymbol{\theta}\right\rangle}\, {\rm d}\boldsymbol{\theta},
\qquad \left\langle { \bf j},\boldsymbol{\theta}\right\rangle=\sum_{t=1}^kj_t\theta_t, \quad \iota^2=-1,
\end{equation}
its Fourier coefficients. They are square matrices of size $m$, where the integrals in (\ref{multilevel_fenefun}) are computed componentwise. The $\mathbf{n}$-th multilevel block Toeplitz matrix associated to $f$ is defined as
\begin{equation*}
    T_{\mathbf{n}}(f) := \begin{bmatrix}
        \widehat{\mathbf{f}}_{\mathbf{i-j}}
    \end{bmatrix}_{\mathbf{i,j=1}}^{\mathbf{n}}
\end{equation*}
and has size $mN(\mathbf{n})\times mN(\mathbf{n})$ . An equivalent expression is
\begin{equation*}
T_{\mathbf{n}}(f) =\sum_{|j_1|<n_1}\ldots \sum_{|j_k|<n_k} J_{n_1}^{j_1} \otimes \cdots\otimes J_{n_k}^{j_k} \otimes \widehat{\mathbf{f}}_{\mathbf{j}}, \qquad \mathbf{j}=(j_1,j_2,\dots,j_k)\in \mathbb{Z}^k,
\end{equation*}
where $J^{j}_{n}$ is the $n \times n$ matrix whose $(l,h)$-th entry equals 1 if $(l-h)=j$ and $0$ otherwise. This defines a linear operator $T_{\mathbf{n}}: L^1([-\pi,\pi]^k,\mathbb{C}^{m \times m}) \longrightarrow \mathbb{C}^{mN(\mathbf{n})\times mN(\mathbf{n})}$. It is easy to prove (see e.g. \cite{MR2108963,MR2376196,Chan:1996:CGM:240441.240445}) that if $f$ is Hermitian, then $T_{\mathbf{n}}(f)$ is Hermitian; if $f$ is Hermitian and nonnegative, but not identically zero almost everywhere, then $T_{\mathbf{n}}(f)$ is Hermitian positive definite; if $f$ is Hermitian and even, $T_{\mathbf{n}}(f)$ is real symmetric. Moreover, it holds that $T_{\mathbf{n}}\big(f(\theta)\big)^{\top}=T_{\mathbf{n}}\big(f(-\theta)\big)$. More spectral and computational properties of these block structures are discussed in \cite{serra_block1,serra_block2}.

Furthermore $\{T_{\mathbf{n}}(f)\}_{\mathbf{n}}$ is the family, or sequence, of Toeplitz matrices associated with $f$, which in turn is called the \emph{generating function} of $\{T_{\mathbf{n}}(f)\}_{\mathbf{n}}$. Throughout this work, we always assume that $f\in L^1([-\pi,\pi]^k,\mathbb{C}^{m \times m})$ is periodically extended to $\mathbb{R}^k$.

The following useful result can be proven for separable generating functions.
\begin{proposition} \label{prop:separable-toeplitz}
    Let $f\in L^1\big([-\pi,\pi]^k\big)$. For any $\mi{n}\in\mathbb{N}^k$, if $f$ is separable, i.e.
    \begin{equation*}
        f = f_1 \otimes \cdots \otimes f_k,
    \end{equation*}
    then it holds
    \begin{equation*}
        T_{\mi{n}}(f) = T_{n_1}(f_1) \otimes \cdots \otimes T_{n_k}(f_k).
    \end{equation*}
\end{proposition}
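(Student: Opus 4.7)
The plan is to reduce the claim to the level of Fourier coefficients and then rely on the explicit Kronecker-tensor expansion of $T_{\mi{n}}(f)$ recalled just before the proposition. Since $T_{\mi{n}}$ is completely determined by the Fourier coefficients $\widehat{f}_{\mi{j}}$, it suffices to show that separability passes from $f$ to its coefficients and that the Kronecker structure of those coefficients then lifts to the matrix level via the bilinearity of $\otimes$.

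First I would compute the Fourier coefficients of a separable symbol. Writing $f(\boldsymbol\theta) = f_1(\theta_1)\cdots f_k(\theta_k)$ and applying Fubini's theorem to the defining integral \eqref{multilevel_fenefun}, the exponential $\mathrm{e}^{\iota\langle\mi{j},\boldsymbol\theta\rangle}$ factors as $\prod_{t=1}^{k}\mathrm{e}^{\iota j_t\theta_t}$, so that
\[
\widehat{f}_{\mi{j}} \;=\; \frac{1}{(2\pi)^k}\int_{[-\pi,\pi]^k} \prod_{t=1}^{k} f_t(\theta_t)\,\mathrm{e}^{\iota j_t\theta_t}\,\mathrm{d}\boldsymbol\theta \;=\; \prod_{t=1}^{k}\widehat{(f_t)}_{j_t}.
\]
For $k=1$ each factor is a scalar, so the product above coincides with the iterated Kronecker product of scalars; in the matrix-valued extension one would need the analogous tensor identity, but for the scalar case stated here this step is immediate.

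Next I would substitute this factorization into the explicit sum representation
\[
T_{\mi{n}}(f) \;=\; \sum_{|j_1|<n_1}\!\!\cdots\!\!\sum_{|j_k|<n_k} \widehat{f}_{\mi{j}}\; J_{n_1}^{j_1}\otimes\cdots\otimes J_{n_k}^{j_k}.
\]
Pushing the scalar coefficient $\widehat{f}_{\mi{j}} = \widehat{(f_1)}_{j_1}\cdots\widehat{(f_k)}_{j_k}$ inside and attaching the $t$-th scalar to the $t$-th shift $J_{n_t}^{j_t}$, every summand takes the form $\bigl(\widehat{(f_1)}_{j_1}J_{n_1}^{j_1}\bigr)\otimes\cdots\otimes\bigl(\widehat{(f_k)}_{j_k}J_{n_k}^{j_k}\bigr)$. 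Invoking the multilinearity of the Kronecker product in each slot, the $k$-fold sum separates into a Kronecker product of $k$ independent one-dimensional sums, each of which is exactly $T_{n_t}(f_t)=\sum_{|j_t|<n_t}\widehat{(f_t)}_{j_t}J_{n_t}^{j_t}$. The identity $T_{\mi{n}}(f)=T_{n_1}(f_1)\otimes\cdots\otimes T_{n_k}(f_k)$ follows.

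I do not anticipate a genuine obstacle here: the argument is essentially Fubini plus the mixed-product (really just multilinearity) property of $\otimes$. The only point that requires a little care is bookkeeping with the multi-index ordering, namely ensuring that the $t$-th factor in the tensor product is consistently associated with the $t$-th shift matrix $J_{n_t}^{j_t}$ on both sides of the identity; this is what makes the split into one-dimensional sums legitimate and justifies the final Kronecker factorization.
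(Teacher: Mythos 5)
Your argument is correct: Fubini factors the multi-dimensional Fourier coefficient of a separable symbol into the product of the one-dimensional coefficients, and the multilinearity of the Kronecker product then splits the $k$-fold sum $\sum_{\mathbf{j}} \widehat{f}_{\mathbf{j}}\, J_{n_1}^{j_1}\otimes\cdots\otimes J_{n_k}^{j_k}$ into the product of the one-level sums $T_{n_t}(f_t)$. The paper states this proposition without proof, and what you wrote is precisely the standard argument it leaves implicit (with the harmless tacit assumption that each factor $f_t$ is itself in $L^1$, so that the right-hand side is well defined).
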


We conclude this section by proving a fundamental proposition.
\begin{proposition} \label{prop:toeplitz-f-frac-g}
    Suppose that $f,g\in L^1\big([-\pi,\pi]^k\big)$ are real valued and not identically zero, with $g\geq 0$. Then, setting
    \begin{equation*}
        r = \mathrm{ess\,inf} \frac{f}{g}, \qquad R = \mathrm{ess\,sup} \frac{f}{g},
    \end{equation*}
    and assuming that $r<R$, for any $\mi{n}\in\mathbb{N}^k$ and for all $j=1,\ldots,N(\mi{n})$ it holds
    \begin{equation*}
        r < \lambda_j \left( T_\mi{n}^{-1}(g)T_\mi{n}(f) \right) < R.
    \end{equation*}
\end{proposition}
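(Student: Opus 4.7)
My plan is to realise the preconditioned matrix spectrum as the range of a generalized Rayleigh quotient and to exploit the classical integral representation of Toeplitz quadratic forms to sandwich it between $r$ and $R$.

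The first step is to observe that, under the standing hypotheses, $T_\mathbf{n}(g)$ is Hermitian positive definite. Indeed, $g$ being real-valued makes $T_\mathbf{n}(g)$ Hermitian, and for any nonzero $x=(x_\mathbf{j})_\mathbf{j}$ the quadratic form admits the integral representation
\[
x^* T_\mathbf{n}(g)\, x \;=\; \frac{1}{(2\pi)^k}\int_{[-\pi,\pi]^k} g(\boldsymbol{\theta})\, |p_x(\boldsymbol{\theta})|^2\, \mathrm{d}\boldsymbol{\theta},
\]
where $p_x$ denotes the (nonzero) multivariate trigonometric polynomial whose coefficients are the entries of $x$. Since $g\geq 0$ is not identically zero and $p_x$ vanishes only on a Lebesgue-null set, this integral is strictly positive. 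Consequently $T_\mathbf{n}^{-1}(g)$ exists and $T_\mathbf{n}^{-1}(g)\, T_\mathbf{n}(f)$ shares its spectrum with the generalized Hermitian eigenvalue problem $T_\mathbf{n}(f)\, x = \lambda\, T_\mathbf{n}(g)\, x$, so in particular all $\lambda_j$ are real.

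By the Courant--Fischer min-max characterisation for such a generalized Hermitian eigenvalue problem, it suffices to prove the strict two-sided bound
\[
r \;<\; \frac{x^* T_\mathbf{n}(f)\, x}{x^* T_\mathbf{n}(g)\, x} \;=\; \frac{\int_{[-\pi,\pi]^k} f(\boldsymbol{\theta})\, |p_x(\boldsymbol{\theta})|^2\, \mathrm{d}\boldsymbol{\theta}}{\int_{[-\pi,\pi]^k} g(\boldsymbol{\theta})\, |p_x(\boldsymbol{\theta})|^2\, \mathrm{d}\boldsymbol{\theta}} \;<\; R \qquad \text{for every } x\neq 0.
\]
The definitions of $r$ and $R$ give $f-rg\geq 0$ and $Rg-f\geq 0$ a.e.; multiplying by the nonnegative weight $|p_x|^2$ and integrating immediately yields the weak bounds $r$ and $R$ on the above ratio.

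The main obstacle, and really the only nontrivial part, is upgrading these weak bounds to strict ones. The hypothesis $r<R$ forces the sets $\{f>rg\}$ and $\{Rg>f\}$ to have positive Lebesgue measure, since otherwise $f/g$ would be a.e.\ equal to $r$ (resp.\ $R$) on $\{g>0\}$ and the two essential bounds would coincide. Because $p_x$ is a nonzero multivariate trigonometric polynomial, its zero set is Lebesgue-null, so the products $(f-rg)\,|p_x|^2$ and $(Rg-f)\,|p_x|^2$ are strictly positive on sets of positive measure. Their integrals are therefore strictly positive, which gives $r<\rho(x)<R$ for every $x\neq 0$ and hence for all eigenvalues. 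The key ingredients in this sharpening step are precisely the assumption $r<R$, which produces positive-measure sets on which the integrands are strictly positive, and the elementary fact that a nonzero multivariate trigonometric polynomial vanishes only on a Lebesgue-null set, ensuring these sets cannot be annihilated by $|p_x|^2$.
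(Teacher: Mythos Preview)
Your proof is correct. It differs somewhat from the paper's argument, though both rest on the same key fact: that a Toeplitz matrix generated by a nonnegative, not-identically-zero function is positive definite. The paper proceeds by contradiction, using the linearity of the Toeplitz operator to write $T_\mathbf{n}^{-1}(g)T_\mathbf{n}(f) - \lambda I_\mathbf{n} = T_\mathbf{n}^{-1}(g)\,T_\mathbf{n}(f-\lambda g)$, then invoking Sylvester's law of inertia on the similar matrix $T_\mathbf{n}^{-1/2}(g)\,T_\mathbf{n}(f-\lambda g)\,T_\mathbf{n}^{-1/2}(g)$ to conclude that if $\lambda\leq r$ (so $f-\lambda g\geq 0$ not identically zero) the matrix $T_\mathbf{n}^{-1}(g)T_\mathbf{n}(f)-\lambda I_\mathbf{n}$ is positive definite, contradicting $\lambda$ being an eigenvalue. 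Your approach instead makes the integral representation of the Toeplitz quadratic form explicit and bounds the generalized Rayleigh quotient directly, upgrading the weak bounds to strict ones by exploiting that nonzero trigonometric polynomials vanish only on null sets. The two arguments are essentially dual reformulations of one another; yours is a bit more self-contained (it proves the positive-definiteness fact inline rather than citing it), while the paper's is slightly slicker and emphasises the algebraic structure via Sylvester's law.
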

\begin{proof}
    Let $\lambda$ be an eigenvalue of the matrix $T_\mi{n}^{-1}(g)T_\mi{n}(f)$, which is well defined because $T_\mi{n}(g)$ is positive definite due to the assumption that $g\geq 0$ not identically zero. Consider
    \begin{align*}
        T_\mi{n}^{-1}(g)T_\mi{n}(f) - \lambda I_\mi{n}
        &= T_\mi{n}^{-1}(g) \big( T_\mi{n}(f) - \lambda T_\mi{n}(g) \big) \\
        &= T_\mi{n}^{-1}(g) T_\mi{n}(f-\lambda g),
    \end{align*}
    which is similar to
    \begin{equation*}
        T_\mi{n}^{-\frac{1}{2}}(g) T_\mi{n}(f-\lambda g) T_\mi{n}^{-\frac{1}{2}}(g)
    \end{equation*}
    and hence shares the same eigenvalues. Supposing that $\lambda \leq r$, since $r<R$ we have $f-\lambda g\geq 0$ not identically zero. It follows that $T_\mi{n}(f-\lambda g)$ is positive definite, and so is $T_\mi{n}^{-\frac{1}{2}}(g) T_\mi{n}(f-\lambda g) T_\mi{n}^{-\frac{1}{2}}(g)$ by Sylvester's law of inertia. Hence, $T_\mi{n}^{-1}(g)T_\mi{n}(f) - \lambda I_\mi{n}$ is also positive definite, but we have reached a contradiction because we assumed that $\lambda$ is an eigenvalue of $T_\mi{n}^{-1}(g)T_\mi{n}(f)$. The same happens if we suppose $\lambda\geq R$, so we conclude that $\lambda\in (r,R)$.
\end{proof}

Proposition \ref{prop:toeplitz-f-frac-g} is somehow known. In the case where $k=1,2$ its proof is given in \cred{\cite[Theorem 2.4]{S-BIT94}} and \cred{\cite[Theorem 2.1]{extr2}}. For a general $k\ge 1$ but with the limitation that $g\equiv 1$ the result can be found in \cred{\cite[Theorem 3.1]{GaroniCapizzano_two}}. Here we \cred{have} given the general setting for the sake of completeness and the proof is in fact not depending on the dimensionality parameter $k$, as it can be already noticed in  \cred{\cite[Theorem 2.4]{S-BIT94}}. The block counterpart is also known. The reader is referred to \cred{\cite[Theorem 3.1]{serra_block1}} where $r$ is the essential infimum of the minimal eigenvalue function of $g^{-1}f$  and $R$ is the essential infimum of the minimal eigenvalue function of $g^{-1}f$, \cred{ under the assumptions that the minimal eigenvalue of $g^{-1}f$ is not essentially constant and the maximal eigenvalue of $g^{-1}f$ is also not essentially constant. In this case, in analogy with the scalar setting, the proof is given for $k=2$ but it perfectly holds for any $k\ge 1$.}

\subsection{Multilevel block GLT sequences}
In this subsection, we introduce multilevel block GLT sequences, a class of special matrix sequences. The formal constructive definition can be found in 
\cite{GaroniCapizzano_one,GaroniCapizzano_two,GaroniCapizzano_three,GaroniCapizzano_four}; for the purpose of this paper it is more convenient to present them through their main properties, listed below. They are equivalent to the full definition and constitute a complete characterisation of the GLT class. Each GLT sequence is equipped with a measurable function $\kappa:[0,1]^k\times[-\pi,\pi]^k\to\mathbb C^{m\times m}$, called \emph{symbol}, which is essentially unique, in the sense that if $\kappa$ and $\varsigma$ are symbols for the same sequence, then $\kappa=\varsigma$ a.e.. The notation $\{A_{\mathbf{n}}\}_{\mathbf{n}}\sim_{\rm
GLT}\kappa$ indicates that $\{A_{\mathbf{n}}\}_{\mathbf{n}}$ is a GLT sequence with symbol $\kappa$.

\begin{enumerate}
    \item[\textbf{GLT\,1.}] If $\{A_{\mathbf{n}}\}_{\mathbf{n}}\sim_{\rm GLT}\kappa$ then $\{A_{\mathbf{n}}\}_{\mathbf{n}}\sim_\sigma\kappa$. Moreover, if each $A_{\mathbf{n}}$ is Hermitian then \cred{$\{A_{\mathbf{n}}\}_{\mathbf{n}}\sim_\lambda\kappa$ ;}
    \item[\textbf{GLT\,2.}]
    \begin{itemize}
        \item If $f\in L^1([-\pi,\pi]^k,\mathbb{C}^{m \times m})$, then $\{T_{\mathbf{n}}(f)\}_\mathbf{n}\sim_{\rm GLT} f(\mathbf{\theta})$;
        \item Given $a:[0,1]^k\to\mathbb C^{m\times m}$, we define the multilevel block diagonal sampling matrix $D_{(\mathbf{n},m)}(a)\in\mathbb C^{m N(\mathbf{n})\times m N(\mathbf{n})}$ as
        \[ D_{\mathbf{n}}(a):=\mathop{\rm diag}_{\mathbf{i}=\mathbf{1},\ldots,\mathbf{n}}a\Bigl(\frac{\mathbf{i}}{\mathbf{n}}\Bigr). \]
        If $a$ is Riemann-integrable, $\{D_{\mathbf{n}}(a)\}_\mathbf{n}\sim_{\rm GLT}a(\mathbf{x})$;
        \item $\{Z_{\mathbf{n}}\}_\mathbf{n}\sim_{\rm GLT} O_m$ if and only if $\{Z_{\mathbf{n}}\}_\mathbf{n}\sim_\sigma 0$.
    \end{itemize}
    \item[\textbf{GLT\,3.}] If $\{A_{\mathbf{n}}\}_{\mathbf{n}}\sim_{\rm GLT}\kappa$ and $\{B_{\mathbf{n}}\}_{\mathbf{n}}\sim_{\rm GLT}\varsigma$, then
    \begin{itemize}
        \item $\{A_{\mathbf{n}}^*\}_{\mathbf{n}}\sim_{\rm GLT}\kappa^*$;
        \item $\{\alpha A_{\mathbf{n}}+\beta B_{\mathbf{n}}\}_\mathbf{n}\sim_{\rm GLT}\alpha\kappa+\beta\varsigma$ for any $\alpha,\beta\in\mathbb C$;
        \item $\{A_{\mathbf{n}}B_{\mathbf{n}}\}_\mathbf{n}\sim_{\rm GLT}\kappa\varsigma$;
        \item Denoting with $A^\dag$ the Moore--Penrose pseudoinverse of a matrix $A$ (recall that $A^\dag=A^{-1}$ whenever $A$ is invertible), $\{A_{\mathbf{n}}^\dag\}_\mathbf{n}\sim_{\rm GLT}\kappa^{-1}$ provided that $\kappa$ is invertible a.e..
    \end{itemize}
    \item[\textbf{GLT\,4.}] If $\{A_{\mathbf{n}}\}_{\mathbf{n}}\sim_{\rm GLT}\kappa$ and each $A_{\mathbf{n}}$ is Hermitian, then $\{f(A_{\mathbf{n}})\}_{\mathbf{n}}\sim_{\rm GLT}f(\kappa)$ for every continuous function $f\colon\C\rightarrow\C$.
    \end{enumerate}

We emphasize that the GLT construction is effective for the analysis of different types of problems, ranging from the distribution of orthogonal polynomial zeros \cite{kuij}, to the spectral analysis of non-normal structures arising in imaging \cite{ngondiep}, to the spectral/singular value analysis of discretized PDEs and systems of PDEs (see \cite{GaroniCapizzano_four,systemPDE} and references therein).

\subsection{Multilevel \texorpdfstring{$\tau$}m
matrices} \label{ssec:tau-matrices}

Before introducing our preconditioner, we recall a few basic definitions concerning $\tau$ matrices (see \cite{Bini1990}). The algebra of unilevel real $\tau$ matrices of size $n$ is the set of real matrices simultaneously diagonalized by the sine transform orthogonal matrix $Q_n := \left[ \sqrt{\frac{2}{n+1}} \sin \left( \frac{\pi i j}{n+1} \right) \right]_{i,j=1}^n$, i.e., the set $\{Q_nD_nQ_n : D_n\in\R^{n\times n} \text{ diagonal}\}$. The algebra of multilevel $\tau$ matrices is defined in the same way from the multilevel sine transform matrix $Q_\mathbf{n} := Q_{n_1}\otimes\ldots\otimes Q_{n_k}$.
The absolute value of a $\tau$ matrix $S_\mathbf{n}$ is given by $\abs{S_\mathbf{n}} = \left(S_\mathbf{n}^{\top} S_\mathbf{n}\right)^\frac{1}{2} = Q_\mathbf{n}^{\top} \abs{D_\mathbf{n}} Q_\mathbf{n}$, where $\abs{D_\mathbf{n}}$ is the diagonal matrix in which all entries of $D_\mathbf{n}$ are replaced with their absolute value. The \cred{unilevel} $\tau$ preconditioner of a real symmetric Toeplitz matrix $T_n = \left[ t_{\abs{i-j}}\right]_{i,j=1}^n$ is defined as $\tau(T_n):= T_n - H_n(T_n)$, where $H_n(T_n)$ is the Hankel matrix whose antidiagonals are constant and equal to $t_2,\ldots,t_{n-1},0,0,0,t_{n-1},\ldots,t_2$. Note that, if $T_n=T_n(f)$ with $f$ any Lebesgue integrable function, it holds $\left\{T_n\right\}_n\sim_{\rm GLT} f$ by GLT 2. Then, the related $\tau$ preconditioners sequence $\left\{\tau(T_n)\right\}_n$ has the same symbol, due to GLT 3 and \cred{the fact} that $\left\{H_n(T_n)\right\}_n$ has been proven to have singular value symbol zero \cite{FaTi} and hence GLT symbol zero by GLT 2.

The computational power of $\tau$ preconditioners relies on the diagonalization property: the inverse of any $\tau$
matrix is immediately obtained by inverting the diagonal part, while the matrix-vector product can be performed via the discrete sine transforms (DSTs) with the cost of $\mathcal{O}(n \log n)$ operations \cite{BC83}.

\section{Spectral properties of symmetrized multilevel Toeplitz matrices} \label{sec:properties-symmetrized}
In this section we outline the available spectral results on symmetrized Toeplitz matrices, obtained by permuting the rows of a standard Toeplitz matrix.

More precisely, the anti-identity permutation matrix $Y_n\in\R^{n\times n}$ is defined as
\begin{equation*}
    Y_n := \begin{bmatrix}
        & & 1 \\ & \iddots \\ 1
    \end{bmatrix}.
\end{equation*}
Then, left multiplying a unilevel \cred{nonsymmetric} Toeplitz matrix $T_n\in\C^{n\times n}$ by $Y_n$ results in a symmetric matrix, or more precisely a Hankel matrix. The multilevel counterpart of the anti-identity matrix is given by
\begin{equation*}
    Y_\mathbf{n} := Y_{n_1}\otimes\cdots\otimes Y_{n_k} = Y_{n_1 \cdots n_k}.
\end{equation*}
It is easy to check that the product $Y_\mathbf{n}T_\mathbf{n}$, where $T_\mathbf{n}\in\C^{N(\mathbf{n})\times N(\mathbf{n})}$ is a multilevel \cred{nonsymmetric} Toeplitz matrix $T_\mathbf{n}\in\C^{N(\mathbf{n})\times N(\mathbf{n})}$, is in fact symmetric.

Precise distributional results for this type of symmetrized Toeplitz structures are given in \cite{Ferrari2019}, in the unilevel case, and in \cite{Ferrari2021}, in the multilevel one. We report here the main theorem on which we base the development of further theory.

\begin{theorem}\cite[Corollary 3.5]{Ferrari2021} \label{thm:symm-mat-distr-multilev}
Suppose $f\in L^1([-\pi,\pi]^k)$ is a $k$-variate function with real Fourier coefficients, periodically extended to $\R^k$. Let $Y_{\mathbf{n}} = Y_{n_1}\otimes \ldots \otimes Y_{n_k} = Y_{N(\mathbf{n})}\in\R^{N(\mathbf{n})\times N(\mathbf{n})}$ be the multilevel anti-identity matrix and let $T_{\mathbf{n}}(f)\in\R^{N(\mathbf{n})\times N(\mathbf{n})}$ be the multilevel Toeplitz matrix generated by $f$. Then
\begin{equation*}
    \left\{ Y_{\mathbf{n}}T_{\mathbf{n}}(f) \right\}_{\mathbf{n}} \sim_\lambda \phi_{\abs{f}},
\end{equation*}
over the domain \cred{$[-2\pi,0 )^k \cup [0,2\pi]^k$} where $\phi_g$ is defined in the following way
\begin{equation*}
    \phi_g := \left\{
    \begin{array}{cl}
        g(\boldsymbol{\theta}), & \boldsymbol{\theta}\in [0,2\pi]^k, \\
        -g(-\boldsymbol{\theta}), & \boldsymbol{\theta}\in  [-2\pi,0)^k.
    \end{array} \right.\,
\end{equation*}
\end{theorem}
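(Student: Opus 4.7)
The plan is to analyze $Y_{\mathbf{n}}T_{\mathbf{n}}(f)$ by a squaring trick together with an approximating-class-of-sequences (a.c.s.) argument, since $\{Y_{\mathbf{n}}\}_{\mathbf{n}}$ is not itself a GLT sequence and so direct GLT manipulation is unavailable. The key preliminary identity is
\[
Y_{\mathbf{n}}T_{\mathbf{n}}(f)Y_{\mathbf{n}}=T_{\mathbf{n}}(f)^{\top}=T_{\mathbf{n}}\bigl(f(-\boldsymbol{\theta})\bigr),
\]
which follows from the persymmetry of multilevel Toeplitz structures together with the identity $T_{\mathbf{n}}(f(\theta))^{\top}=T_{\mathbf{n}}(f(-\theta))$ recalled earlier. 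Combined with $Y_{\mathbf{n}}^{2}=I_{N(\mathbf{n})}$, it yields
\[
\bigl(Y_{\mathbf{n}}T_{\mathbf{n}}(f)\bigr)^{2}=T_{\mathbf{n}}\bigl(f(-\boldsymbol{\theta})\bigr)\,T_{\mathbf{n}}\bigl(f(\boldsymbol{\theta})\bigr).
\]
Because $f$ has real Fourier coefficients, $f(-\boldsymbol{\theta})=\overline{f(\boldsymbol{\theta})}$, so by GLT\,2 and GLT\,3 the sequence of squares is GLT with symbol $|f|^{2}$. Since each square is real symmetric positive semidefinite, GLT\,1 gives $\{(Y_{\mathbf{n}}T_{\mathbf{n}}(f))^{2}\}_{\mathbf{n}}\sim_{\lambda}|f|^{2}$ on $[-\pi,\pi]^{k}$. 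Consequently, for every even $F\in C_{c}(\R)$ the spectral averages of $Y_{\mathbf{n}}T_{\mathbf{n}}(f)$ converge to $(2\pi)^{-k}\int_{[-\pi,\pi]^{k}}F(|f|)\,{\rm d}\boldsymbol{\theta}$, which, via a change of variable $\boldsymbol{\theta}\mapsto-\boldsymbol{\theta}$ and the $2\pi$-periodicity of $|f|$, is exactly the even part of $\mu_{k}(D)^{-1}\int_{D}F(\phi_{|f|})\,{\rm d}\boldsymbol{\theta}$ with $D=[-2\pi,0)^{k}\cup[0,2\pi]^{k}$.

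The remaining task is to establish the symmetry of the limiting spectrum about the origin: for every odd $F\in C_{c}(\R)$, the spectral averages of $Y_{\mathbf{n}}T_{\mathbf{n}}(f)$ must tend to zero. I would reduce this to the banded case by approximation. Pick trigonometric polynomials $p_{j}$ with real Fourier coefficients such that $\|p_{j}-f\|_{L^{1}}\to 0$. For each fixed $p_{j}$, the matrix $Y_{\mathbf{n}}T_{\mathbf{n}}(p_{j})$ is a banded Hankel-type structure whose nonzero entries lie in an $\mathcal{O}(1)$-thick neighborhood of the principal antidiagonal. A direct count of nonzero cyclic products then shows that $\mathrm{tr}\bigl((Y_{\mathbf{n}}T_{\mathbf{n}}(p_{j}))^{2\ell+1}\bigr)=\mathcal{O}(1)$ uniformly in $\mathbf{n}$ for every fixed $\ell\ge 0$, so the spectrum of $Y_{\mathbf{n}}T_{\mathbf{n}}(p_{j})$ is asymptotically symmetric about the origin and, together with the squaring argument applied to $p_{j}$, one gets $\{Y_{\mathbf{n}}T_{\mathbf{n}}(p_{j})\}_{\mathbf{n}}\sim_{\lambda}\phi_{|p_{j}|}$ over $D$. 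To transfer this to $f$, observe that the standard Toeplitz a.c.s.\ result gives $\{T_{\mathbf{n}}(p_{j})\}_{\mathbf{n}}\xrightarrow{\text{a.c.s.\ wrt }j}\{T_{\mathbf{n}}(f)\}_{\mathbf{n}}$; left multiplication by the orthogonal $Y_{\mathbf{n}}$ preserves rank and spectral norm, hence the a.c.s.\ property is preserved by $Y_{\mathbf{n}}$. Since $|p_{j}|\to|f|$ in $L^{1}$ implies $\phi_{|p_{j}|}\to\phi_{|f|}$ in measure, and every matrix involved is real symmetric, the Hermitian version of Lemma~\ref{lem:Corollary5.1} delivers $\{Y_{\mathbf{n}}T_{\mathbf{n}}(f)\}_{\mathbf{n}}\sim_{\lambda}\phi_{|f|}$ over $D$, as claimed.

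The principal obstacle is the sign-balance step. The squaring identity is powerful but blind to the distribution of signs of the eigenvalues: it controls $|\lambda|$ only, and this information is not accessible at the level of GLT symbols because $\{Y_{\mathbf{n}}T_{\mathbf{n}}(f)\}_{\mathbf{n}}$ is not a GLT sequence. Passing through the banded case is the cleanest way to inject the necessary combinatorial/structural information. A more direct attack via $\mathrm{tr}\bigl(Y_{\mathbf{n}}T_{\mathbf{n}}(f)\bigr)$—which can be written as a sum of Fourier coefficients along the principal antidiagonal—is appealing, but its smallness would require regularity assumptions beyond the sole $L^{1}$ hypothesis allowed by the theorem, making the a.c.s.\ route through trigonometric polynomials the natural choice.
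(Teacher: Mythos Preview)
The paper does not prove this theorem; it is quoted verbatim as Corollary~3.5 of \cite{Ferrari2021} and used as a black box. So there is no in-paper proof to compare against.

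That said, your sketch is correct and is in fact the strategy used in \cite{Ferrari2019,Ferrari2021}. The three pillars are exactly the ones you identify: (i) the persymmetry identity $Y_{\mathbf{n}}T_{\mathbf{n}}(f)Y_{\mathbf{n}}=T_{\mathbf{n}}(f)^{\top}$ gives $(Y_{\mathbf{n}}T_{\mathbf{n}}(f))^{2}=T_{\mathbf{n}}(f)^{\top}T_{\mathbf{n}}(f)$, whose GLT symbol is $|f|^{2}$ and which, being Hermitian positive semidefinite, has eigenvalue distribution $|f|^{2}$ by GLT\,1; this settles the even test functions after the change of variable you describe. (ii) Asymptotic sign balance is obtained first for trigonometric polynomials $p_{j}$: your combinatorial observation that the antidiagonal band of width $O(1)$ forces odd-length closed walks in $Y_{\mathbf{n}}T_{\mathbf{n}}(p_{j})$ to start within $O(1)$ of the centre, hence $\mathrm{tr}\bigl((Y_{\mathbf{n}}T_{\mathbf{n}}(p_{j}))^{2\ell+1}\bigr)=O(1)$, is precisely the mechanism used in the cited references. (iii) The passage to general $f\in L^{1}$ via a.c.s.\ is routine: $\|T_{\mathbf{n}}(p_{j}-f)\|_{\mathrm{tr}}\le N(\mathbf{n})\|p_{j}-f\|_{L^{1}}$ gives the a.c.s.\ relation, left multiplication by the orthogonal $Y_{\mathbf{n}}$ preserves it, $|p_{j}|\to|f|$ in $L^{1}$ gives $\phi_{|p_{j}|}\to\phi_{|f|}$ in measure, and Lemma~\ref{lem:Corollary5.1} (Hermitian version) closes the argument. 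Nothing is missing.
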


The spectral information is used in \cite{Ferrari2019} to construct efficient circulant preconditioners for the related symmetrized matrix sequences. These results can be tailored with minor adjustments to the setting of $\tau$ preconditioners. However, in what follows, we give general results for a GLT preconditioning which contain the corresponding adaptation when the preconditioner is $\tau$.

\begin{theorem} \label{thm:precond-toeplitz-seq-distrib}
    Let $f\in L^1([-\pi,\pi]^k)$ with real Fourier coefficients and sparsely vanishing i.e. with a set of zeros of zero Lebesgue measure. Suppose that $P_\mi{n}$ are real \cred{centrosymmetric} positive definite preconditioners such that
 \[    
 \{ P_\mi{n}^{-1} T_\mi{n}(f) \}_\mi{n} \sim_{\rm GLT} u
\]
with $u\in L^1([-\pi,\pi]^k)$ unimodular function i.e. $|u(\boldsymbol{\theta})|=1$ for every $\boldsymbol{\theta}$ and with $u$ having real Fourier coefficients. 
Then  $\{ P_\mi{n}^{-1} Y_\mi{n} T_\mi{n}(f) \}_\mi{n}$ is clustered in the eigenvalue sense at $\pm 1$ and     
        \begin{equation*}
        \{ \cred{P_\mi{n}^{-1} Y_\mi{n} T_\mi{n}(f)} \}_\mi{n} \sim_\lambda \phi_1(\boldsymbol{\theta}) := \left\{
        \begin{array}{cl}
            1, & \boldsymbol{\theta}\in [0,2\pi]^k, \\
        -1, & \boldsymbol{\theta}\in  [-2\pi,0)^k.
        \end{array} \right.
    \end{equation*}
\end{theorem}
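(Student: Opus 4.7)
The plan is to reduce the problem to studying the symmetric congruence $S_\mi{n}$ of the Hankel-type matrix $Y_\mi{n}T_\mi{n}(f)$, to compute the GLT symbol of $S_\mi{n}^{2}$ via the hypothesis, and then to combine the resulting cluster at $\pm 1$ with Sylvester's law of inertia in order to pin down the signed distribution $\phi_{1}$.

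First I will exploit centrosymmetry. Since $P_\mi{n}$ is symmetric positive definite and $Y_\mi{n}P_\mi{n}Y_\mi{n}=P_\mi{n}$, uniqueness of the positive definite square root forces $P_\mi{n}^{\pm 1/2}$ to be centrosymmetric, hence to commute with $Y_\mi{n}$. Setting
\[
S_\mi{n}:=P_\mi{n}^{-1/2}Y_\mi{n}T_\mi{n}(f)P_\mi{n}^{-1/2}=Y_\mi{n}\tilde{A}_\mi{n},\qquad \tilde{A}_\mi{n}:=P_\mi{n}^{-1/2}T_\mi{n}(f)P_\mi{n}^{-1/2},
\]
$S_\mi{n}$ is a real symmetric congruence of the symmetric matrix $Y_\mi{n}T_\mi{n}(f)$ (the latter symmetry following from $Y_\mi{n}T_\mi{n}(f)Y_\mi{n}=T_\mi{n}(f)^{\top}$). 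Because $P_\mi{n}^{-1}Y_\mi{n}T_\mi{n}(f)=Y_\mi{n}P_\mi{n}^{-1}T_\mi{n}(f)$ is similar to $S_\mi{n}$ through $P_\mi{n}^{1/2}$, the preconditioned spectrum is real and equal to that of $S_\mi{n}$, and moreover the symmetry of $S_\mi{n}$ yields $S_\mi{n}^{2}=\tilde{A}_\mi{n}^{\top}\tilde{A}_\mi{n}$.

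Next I will determine the GLT symbol of $S_\mi{n}^{2}$. From $P_\mi{n}=T_\mi{n}(f)(P_\mi{n}^{-1}T_\mi{n}(f))^{-1}$, GLT~2 together with the pseudoinverse and product rules of GLT~3, applied under the sparse vanishing of $f$ (hence a.e.\ invertibility of $f$ and $u$), gives $\{P_\mi{n}\}_\mi{n}\sim_{\rm GLT}f/u$. The symmetry and positive definiteness of $P_\mi{n}$ combined with GLT~1 forces this symbol to be real and positive a.e., which together with $|u|=1$ pins down $f/u=|f|$, so $u=f/|f|$. By GLT~4 applied to $x\mapsto x^{-1/2}$, $\{P_\mi{n}^{-1/2}\}_\mi{n}\sim_{\rm GLT}|f|^{-1/2}$, hence by GLT~3
\[
\{\tilde{A}_\mi{n}\}_\mi{n}\sim_{\rm GLT}|f|^{-1/2}\,f\,|f|^{-1/2}=u,\qquad \{S_\mi{n}^{2}\}_\mi{n}=\{\tilde{A}_\mi{n}^{\top}\tilde{A}_\mi{n}\}_\mi{n}\sim_{\rm GLT}|u|^{2}=1.
\]
Since $S_\mi{n}^{2}$ is Hermitian positive semidefinite, GLT~1 combined with Proposition \ref{prop:distrib-cluster-equiv} shows that its eigenvalues weakly cluster at $1$. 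Taking square roots, $|\lambda_{j}(S_\mi{n})|\to 1$, so the eigenvalues of $P_\mi{n}^{-1}Y_\mi{n}T_\mi{n}(f)$ cluster at $\{-1,+1\}$.

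To promote the cluster to the full distribution $\phi_{1}$, I will balance the signs through Sylvester's law of inertia: the real-invertible congruence $P_\mi{n}^{-1/2}(\cdot)P_\mi{n}^{-1/2}$ preserves the numbers of positive and negative eigenvalues, so $S_\mi{n}$ and $Y_\mi{n}T_\mi{n}(f)$ share the same signature. By Theorem \ref{thm:symm-mat-distr-multilev} with $f$ sparsely vanishing, $\phi_{|f|}$ is strictly positive on $[0,2\pi]^{k}$ and strictly negative on $[-2\pi,0)^{k}$, two sets of equal Lebesgue measure, so asymptotically half of the eigenvalues of $Y_\mi{n}T_\mi{n}(f)$ (and hence of $S_\mi{n}$) are positive and half negative. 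Combining this sign count with the $\pm 1$ cluster gives, for every $F\in C_{c}(\C)$,
\[
\frac{1}{N(\mi{n})}\sum_{j=1}^{N(\mi{n})}F\!\left(\lambda_{j}(P_\mi{n}^{-1}Y_\mi{n}T_\mi{n}(f))\right)\longrightarrow\tfrac{1}{2}F(1)+\tfrac{1}{2}F(-1)=\frac{1}{2(2\pi)^{k}}\int_{[-2\pi,0)^{k}\cup[0,2\pi]^{k}}F(\phi_{1}(\boldsymbol{\theta}))\,\mathrm{d}\boldsymbol{\theta},
\]
which is the claimed $\sim_{\lambda}\phi_{1}$ relation. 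The main obstacle lies in the second step: the hypothesis provides a GLT symbol only for the product $P_\mi{n}^{-1}T_\mi{n}(f)$, and one must leverage the Hermitian character of $P_\mi{n}$ together with the unimodularity of $u$ to transfer this into a GLT symbol for $P_\mi{n}^{-1/2}$ itself, which is the bridge to the identity-like symbol $1$ for $S_\mi{n}^{2}$.
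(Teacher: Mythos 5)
Your proposal is correct and takes essentially the same route as the paper: centrosymmetry to pass to the symmetric matrix $P_\mi{n}^{-1/2}Y_\mi{n}T_\mi{n}(f)P_\mi{n}^{-1/2}$, the GLT calculus to show $\{P_\mi{n}^{-1/2}T_\mi{n}(f)P_\mi{n}^{-1/2}\}_\mi{n}\sim_{\rm GLT}u$ with $u$ unimodular (hence the $\pm 1$ cluster), and Sylvester's law of inertia combined with Theorem \ref{thm:symm-mat-distr-multilev} to split the signs evenly and obtain $\phi_1$. The only cosmetic deviations are that you detect the cluster through $S_\mi{n}^{2}=\tilde{A}_\mi{n}^{\top}\tilde{A}_\mi{n}\sim_{\rm GLT}1$ instead of the singular value distribution of $\tilde{A}_\mi{n}$, and that you should obtain $P_\mi{n}^{-1/2}$ by first applying the inversion rule of GLT 3 and then GLT 4 with the continuous square root (as the paper does), since GLT 4 does not literally apply to the function $x\mapsto x^{-1/2}$, which is not continuous at $0$.
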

\begin{proof}
We first recall that any \cred{centrosymmetric} matrix commutes with $Y_\mi{n}$ and that the inverse and the square root (the principal one) of a \cred{centrosymmetric} matrix is still \cred{centrosymmetric} 
(see \cite{centrosy3,centrosy2,centrosy1} and references therein; in particular \cred{\cite[Corollary 1]{centrosy3}}). In our setting $P_\mi{n}$ is real \cred{centrosymmetric} and positive definite so that 
its principal square root is real centrosymmetric and it is the positive definite square root obtained via the Schur decomposition.

Now we consider the Hermitian matrix $P_\mi{n}^{-1/2} Y_\mi{n} T_\mi{n}(f)P_\mi{n}^{-1/2}$ which is similar to our preconditioned matrix $P_\mi{n}^{-1} Y_\mi{n} T_\mi{n}(f)$ so that they have the same eigenvalues. With these premises we have
\[
P_\mi{n}^{-1/2} Y_\mi{n} T_\mi{n}(f)P_\mi{n}^{-1/2}=Y_\mi{n} P_\mi{n}^{-1/2} T_\mi{n}(f)P_\mi{n}^{-1/2}.
\]
By Theorem \ref{thm:symm-mat-distr-multilev} we now that $\{ Y_\mi{n} T_\mi{n}(f) \}_\mi{n}$ is real symmetric and distributed as $\phi_{\abs{f}}$ and hence $N(\mi{n})+o(N(\mi{n}))$ eigenvalues of $P_\mi{n}^{-1/2} Y_\mi{n} T_\mi{n}(f)P_\mi{n}^{-1/2}$ are negative and $N(\mi{n})+o(N(\mi{n}))$ eigenvalues of $P_\mi{n}^{-1/2} Y_\mi{n} T_\mi{n}(f)P_\mi{n}^{-1/2}$ are positive due to Sylvester inertia law.

Now we use the powerful GLT tools. By the third part of item GLT 3., the assumption $\{ P_\mi{n}^{-1} T_\mi{n}(f) \}_\mi{n} \sim_{\rm GLT} u$ and the fact that $\{ T_\mi{n}(f) \}_\mi{n} \sim_{\rm GLT} f$ by the first part of item GLT 2., imply that $\{ P_\mi{n}^{-1} \}_\mi{n} \sim_{\rm GLT} u/f$ with $u/f$ nonnegative because $P_\mi{n}$ is positive definite and with $u/f$ sparsely vanishing because of the assumptions on $f$ and $u$. Hence by item  GLT 4. 
$\{ P_\mi{n}^{-1/2} \}_\mi{n} \sim_{\rm GLT} \sqrt{u/f}$ and therefore, again by the third part of item GLT 3. we deduce
\[
\{ P_\mi{n}^{-1/2} T_\mi{n}(f)P_\mi{n}^{-1/2} \}_\mi{n} \sim_{\rm GLT} u.
\]
Since $u$ is unimodular, item GLT 1. leads to the conclusion that $\{ P_\mi{n}^{-1/2} T_\mi{n}(f)P_\mi{n}^{-1/2} \}_\mi{n}$ is clustered at $1$ in the singular value sense and the same is true for the whole symmetrized preconditioned matrix sequence since $Y_\mi{n}$ is unitary.
However, the symmetrized preconditioned matrix sequence is Hermitian and hence the moduli of the eigenvalues are equal to the singular values and the eigenvalues are real.

From the latter we directly deduce that 
\[
  \{P_\mi{n}^{-1/2} Y_\mi{n} T_\mi{n}(f)P_\mi{n}^{-1/2}\}_\mi{n} = \{ Y_\mi{n} P_\mi{n}^{-1/2} T_\mi{n}(f)P_\mi{n}^{-1/2} \}_\mi{n} 
  \] 
is clustered in the eigenvalue sense at $\pm 1$. However, as already observed $N(\mi{n})+o(N(\mi{n}))$ eigenvalues of $P_\mi{n}^{-1/2} Y_\mi{n} T_\mi{n}(f)P_\mi{n}^{-1/2}$ are negative and $N(\mi{n})+o(N(\mi{n}))$ eigenvalues of $P_\mi{n}^{-1/2} Y_\mi{n} T_\mi{n}(f)P_\mi{n}^{-1/2}$ are positive and therefore we finally obtain 
\[
\{ \cred{P_\mi{n}^{-1} Y_\mi{n} T_\mi{n}(f)} \}_\mi{n} \sim_\lambda \phi_1
\]
and the claimed thesis is proven.
\end{proof}

In the following remark we discuss the assumptions in more detail.

\begin{remark} \label{remark:acs-cluster-equiv}
The assumption on the GLT character of the preconditioned matrix sequence can be stated in terms of the a.c.s notion.

For instance $\{ P_\mi{n}^{-1} T_\mi{n}(f) \}_\mi{n} \sim_{\rm GLT} u$ in terms of a.c.s approximation can be written as
$\{ \{ T_\mi{n}(u) \}_\mi{n} \}_m$ is an a.c.s.\! for $\{ P_\mi{n}^{-1} T_\mi{n}(f) \}_\mi{n}$ or $\{ \{ T_\mi{n}(u_m) \}_\mi{n} \}_m$ is an a.c.s.\! for $\{ P_\mi{n}^{-1} T_\mi{n}(f) \}_\mi{n}$
and $u_m$ converges to $u$ as in hypothesis 3. of Lemma \ref{lem:Corollary5.1}. There are practical situations in which proving an a.c.s relation is easier.

\end{remark}

\begin{remark} \label{remark:circ-tau-matrix-alg-prec}
  When considering circulant and $\tau$ preconditioners there are slight variations.
  For instance, given $f$ as in Theorem \ref{thm:precond-toeplitz-seq-distrib} we can construct circulant preconditioners $S_\mi{n}$ such $\{ S_\mi{n}^{-1} T_\mi{n}(f) \}_\mi{n}\sim_{\rm GLT} 1$, while it is impossible to construct $\tau$ preconditioners $S_\mi{n}$ such $\{ S_\mi{n}^{-1} T_\mi{n}(f) \}_\mi{n}\sim_{\rm GLT} 1$, unless $f$ is real-valued almost everywhere. However, both in the case of circulant and $\tau$ preconditioners there are canonical ways to construct $P_\mi{n}$ positive definite and \cred{centrosymmetric} with 
  $\{ P_\mi{n}^{-1} T_\mi{n}(f) \}_\mi{n}\sim_{\rm GLT} f/|f|$. We observe that $f$ with real Fourier coefficient implies that $f/|f|$ has real Fourier coefficients and hence all the assumptions of Theorem \ref{thm:precond-toeplitz-seq-distrib} are fulfilled. 
   
  
\end{remark}

\begin{remark} \label{remark:role of centro-symmetry}
  The role of \cred{centrosymmetry} of the preconditioners is important. Indeed if we drop such assumption and we maintain the rest of the hypotheses in Theorem \ref{thm:precond-toeplitz-seq-distrib}, then by following a similar path as in its proof we arrive to write that
\[  
  \{Y_\mi{n} T_\mi{n}(f)P_\mi{n}^{-1}\}_\mi{n} = \{Y_\mi{n} T_\mi{n}(u)\}_\mi{n} + \{Y_\mi{n}\left[ T_\mi{n}(f)P_\mi{n}^{-1}-T_\mi{n}(u)\right]\}_\mi{n}
  \]
  with $Y_\mi{n} T_\mi{n}(f)P_\mi{n}^{-1}$ similar to the preconditioned matrix sequence $P_\mi{n}^{-1}Y_\mi{n} T_\mi{n}(f)$.
  Now by Theorem \ref{thm:symm-mat-distr-multilev} we have
\[
\{ \cred{Y_\mi{n} T_\mi{n}(u)} \}_\mi{n} \sim_\lambda \phi_1
\]  
since $|u|=1$ and  $\{Y_\mi{n} \left[T_\mi{n}(f)P_\mi{n}^{-1}-T_\mi{n}(u)\right]\}_\mi{n}\sim_{\rm GLT} 0$ because of the GLT axioms.
However, we cannot conclude $\{ P_\mi{n}^{-1}Y_\mi{n} T_\mi{n}(f) \}_\mi{n} \sim_\lambda \phi_1$ because the zero-distributed perturbation $\{Y_\mi{n} \left[ T_\mi{n}(f)P_\mi{n}^{-1}-T_\mi{n}(u)\right]\}_\mi{n}\sim_{\rm GLT}$ is not Hermitian.
However the matrix sequence $\{Y_\mi{n} T_\mi{n}(u)\}_\mi{n}$ is real symmetric and hence a weaker result than in Theorem \ref{thm:precond-toeplitz-seq-distrib} can be obtained under the assumptions of the main result in \cite{barb non-h}.
\end{remark}

\subsection{The block setting}

By making a minor variation in the proof of Theorem \ref{thm:symm-mat-distr-multilev}, the block setting in which $f\in L^1([-\pi,\pi]^k,\mathbb{C}^{m \times m})$ can easily be included (see \cite[Theorem 3.4]{Ferrari2019}), under the natural assumption that the Fourier coefficients of $f$ are Hermitian.

\begin{theorem} \label{thm:symm-mat-distr-multilev-block} 	
Suppose $f\in L^1([-\pi,\pi]^k,\mathbb{C}^{m \times m})$ is a $k$-variate matrix-valued function with Hermitian Fourier coefficients, periodically extended to the whole real plane. Let $Y_{\mathbf{n}}(m) := Y_{\mathbf{n}}\otimes I_m \in\R^{mN(\mathbf{n})\times mN(\mathbf{n})}$ and let $T_{\mathbf{n}}(f)\in\C^{mN(\mathbf{n})\times mN(\mathbf{n})}$ be the block multilevel Toeplitz matrix generated by $f$. Then,
\begin{equation*}
    \left\{ Y_{\mathbf{n}}T_{\mathbf{n}}(f) \right\}_{\mathbf{n}} \sim_\lambda \phi_{\abs{f}}, \qquad \abs{f}=(f^*f)^{\frac{1}{2}},
\end{equation*}
over the domain \cred{ $[-2\pi,0 )^k \cup [0,2\pi]^k$ }, where $\phi_g$ is defined in the following way
\begin{equation*}
    \phi_g := \left\{
    \begin{array}{cl}
        g(\boldsymbol{\theta}), & \boldsymbol{\theta}\in [0,2\pi]^k, \\
        -g(-\boldsymbol{\theta}), & \boldsymbol{\theta}\in  [-2\pi,0)^k.
    \end{array} \right.\,
\end{equation*}
\end{theorem}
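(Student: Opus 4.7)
Since the theorem is advertised as a ``minor variation'' of Theorem \ref{thm:symm-mat-distr-multilev}, I would organize the proof as a block-valued adaptation of \cite[Corollary 3.5]{Ferrari2021}, centered on three ingredients: (i) a Hermitian symmetry observation, (ii) the singular value distribution of $\{T_\mathbf{n}(f)\}_\mathbf{n}$, and (iii) an inertia-balancing argument performed through an a.c.s.\! approximation.

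First I would establish that $Y_\mathbf{n}(m) T_\mathbf{n}(f)$ is Hermitian. Writing $T_\mathbf{n}(f)$ in its block form and using the Kronecker description $Y_\mathbf{n}(m) = Y_\mathbf{n}\otimes I_m$, one finds that the $(\mathbf{i},\mathbf{j})$-th $m\times m$ block of $Y_\mathbf{n}(m) T_\mathbf{n}(f)$ equals $\widehat{\mathbf{f}}_{\mathbf{n}+\mathbf{1}-\mathbf{i}-\mathbf{j}}$. Since this expression is symmetric in the pair $(\mathbf{i},\mathbf{j})$ and each Fourier coefficient is Hermitian by hypothesis, the matrix coincides with its conjugate transpose. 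Next, I would invoke the block Szegő-type distribution $\{T_\mathbf{n}(f)\}_\mathbf{n}\sim_\sigma f$ (a direct consequence of items GLT 2 and GLT 1) together with the unitarity of $Y_\mathbf{n}(m)$ to deduce $\{Y_\mathbf{n}(m) T_\mathbf{n}(f)\}_\mathbf{n}\sim_\sigma f$. Combined with the Hermitian character just established, this yields $|\lambda_j(Y_\mathbf{n}(m) T_\mathbf{n}(f))|=\sigma_j(Y_\mathbf{n}(m) T_\mathbf{n}(f))$ and hence the absolute values of the eigenvalues are asymptotically distributed as the singular values of $f$.

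To upgrade this to the claimed symbol $\phi_{|f|}$, I would pass through an approximating class of sequences. Take $f_j$ to be the matrix-valued Cesàro (Fejér) mean of $f$: it is a trigonometric polynomial whose Fourier coefficients are still Hermitian, it converges to $f$ in $L^1$, and $\phi_{|f_j|}\to\phi_{|f|}$ in measure. The standard trace-norm estimate $\|T_\mathbf{n}(f-f_j)\|_{\rm tr}\le m N(\mathbf{n})\|f-f_j\|_{L^1}$ together with $\|Y_\mathbf{n}(m)\|=1$ produces the a.c.s.\! relation
\[
\{\{Y_\mathbf{n}(m) T_\mathbf{n}(f_j)\}_\mathbf{n}\}_j \xrightarrow{\text{a.c.s.\ wrt\ $j$}}\{Y_\mathbf{n}(m) T_\mathbf{n}(f)\}_\mathbf{n},
\]
and the Hermitian case of Lemma \ref{lem:Corollary5.1} reduces the theorem to the statement $\{Y_\mathbf{n}(m) T_\mathbf{n}(f_j)\}_\mathbf{n}\sim_\lambda \phi_{|f_j|}$ for a matrix trigonometric polynomial $f_j$ with Hermitian coefficients.

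The hard part is exactly this last step, that is, counting signs of eigenvalues in the polynomial case and showing that roughly $mN(\mathbf{n})/2$ are positive and $mN(\mathbf{n})/2$ are negative, while their moduli remain governed by the singular values of $f_j$. In the scalar unilevel case one relies on explicit Hankel/Fiedler-type inertia arguments that do not transfer verbatim to the block multilevel situation. I would address this by exploiting the tensor factorization $Y_\mathbf{n}(m)=Y_{n_1}\otimes\cdots\otimes Y_{n_k}\otimes I_m$ to reduce the multilevel block inertia count to the unilevel block case already treated in \cite[Theorem 3.4]{Ferrari2019}, propagating the sign balance level by level via Sylvester's law of inertia. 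Combined with the already established distribution of the moduli, this delivers the claimed symbol $\phi_{|f|}$.
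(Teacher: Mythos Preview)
The paper does not supply an explicit proof of this theorem: it only states that the result follows ``by making a minor variation in the proof of Theorem~\ref{thm:symm-mat-distr-multilev}'' and refers to \cite[Theorem 3.4]{Ferrari2019} for the block unilevel ingredient. Your proposal is precisely that minor variation spelled out---Hermitian symmetry of $Y_\mathbf{n}(m)T_\mathbf{n}(f)$, the singular value symbol via GLT~1--2, the a.c.s.\ reduction to matrix trigonometric polynomials through Ces\`aro means and Lemma~\ref{lem:Corollary5.1}, and the inertia balance obtained by lifting the unilevel block count of \cite[Theorem 3.4]{Ferrari2019} across levels---so it matches the route the paper indicates.
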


\begin{theorem}\label{thm:precond-seq-distrib-multilev-block}
    Let $f\in L^1([-\pi,\pi]^k,\mathbb{C}^{m \times m})$ is a $k$-variate matrix-valued function with Hermitian Fourier coefficients and let $Y_{\mathbf{n}}(m)=Y_\mathbf{n}\otimes I_m \in\R^{mN(\mathbf{n})\times mN(\mathbf{n})}$. 
   Suppose that $f$ is sparsely vanishing i.e. with $\sigma_{\min}(f)$ \cred{having a set of zeros of zero Lebesgue measure.}  
    Let $T_\mathbf{n}(f)\in\R^{mN(\mathbf{n})\times mN(\mathbf{n})}$ be the block multilevel Toeplitz matrix generated by $f$. 
  Suppose that $P_\mi{n}$ are \cred{centrosymmetric} positive definite preconditioners such that
 \[    
 \{ P_\mi{n}^{-1} T_\mi{n}(f) \}_\mi{n} \sim_{\rm GLT} u
\]
with $u\in L^1([-\pi,\pi]^k),\mathbb{C}^{m \times m}$ being unitary matrix-valued i.e. $u(\boldsymbol{\theta})$ unitary matrix for almost every $\boldsymbol{\theta}$ and with $u$ having Hermitian Fourier coefficients. 
Then  $\{ P_\mi{n}^{-1} Y_\mi{n} T_\mi{n}(f) \}_\mi{n}$ is clustered in the eigenvalue sense at $\pm 1$ and 
    \[ \left\{ {P_\mathbf{n}}^{-1} Y_{\mathbf{n}}(m) T_\mathbf{n}(f) \right\}_\mathbf{n} \sim_\lambda \phi_{I_m} = \left\{
    \begin{array}{cl}
        I_m, & \boldsymbol{\theta}\in [0,2\pi]^k, \\
        -I_m, & \boldsymbol{\theta}\in  [-2\pi,0)^k.
    \end{array} \right.\, \]
\end{theorem}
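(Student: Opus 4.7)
The plan is to mirror the proof of Theorem~\ref{thm:precond-toeplitz-seq-distrib}, substituting each scalar ingredient with its matrix-valued counterpart and invoking the block symmetrization result (Theorem~\ref{thm:symm-mat-distr-multilev-block}) together with the block GLT axioms. The centrosymmetry of $P_\mathbf{n}$ relative to $Y_\mathbf{n}(m)$ gives $Y_\mathbf{n}(m)P_\mathbf{n}=P_\mathbf{n}Y_\mathbf{n}(m)$, and the same property is inherited by $P_\mathbf{n}^{1/2}$ via the Schur-based construction. Setting $M_\mathbf{n}:=P_\mathbf{n}^{-1/2}Y_\mathbf{n}(m)T_\mathbf{n}(f)P_\mathbf{n}^{-1/2}=Y_\mathbf{n}(m)P_\mathbf{n}^{-1/2}T_\mathbf{n}(f)P_\mathbf{n}^{-1/2}$, the Hermitian-Fourier-coefficient hypothesis on $f$ makes $Y_\mathbf{n}(m)T_\mathbf{n}(f)$ Hermitian (same computation used in Theorem~\ref{thm:symm-mat-distr-multilev-block}), so $M_\mathbf{n}$ is Hermitian and similar to $P_\mathbf{n}^{-1}Y_\mathbf{n}(m)T_\mathbf{n}(f)$; the spectral analysis can therefore be carried out on $M_\mathbf{n}$.

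For the sign split of the eigenvalues, Theorem~\ref{thm:symm-mat-distr-multilev-block} gives $\{Y_\mathbf{n}(m)T_\mathbf{n}(f)\}_\mathbf{n}\sim_\lambda\phi_{|f|}$, and the sparsely-vanishing assumption on $\sigma_{\min}(f)$ forces $mN(\mathbf{n})/2+o(N(\mathbf{n}))$ eigenvalues to be positive and the same number negative; Sylvester's law of inertia applied to the real invertible congruence $P_\mathbf{n}^{-1/2}(\,\cdot\,)P_\mathbf{n}^{-1/2}$ transfers this split to $M_\mathbf{n}$. For the cluster at $\pm 1$, I chain the GLT axioms: GLT~2 yields $\{T_\mathbf{n}(f)\}_\mathbf{n}\sim_{\rm GLT} f$, and combining this with the hypothesis $\{P_\mathbf{n}^{-1}T_\mathbf{n}(f)\}_\mathbf{n}\sim_{\rm GLT}u$ through GLT~3 produces $\{P_\mathbf{n}^{-1}\}_\mathbf{n}\sim_{\rm GLT}uf^{-1}$, well-defined a.e.\ by the sparsely-vanishing assumption. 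Selecting a Hermitian positive definite representative for this symbol (admissible because $\{P_\mathbf{n}^{-1}\}_\mathbf{n}$ is Hermitian positive definite), GLT~4 applied to the square root gives $\{P_\mathbf{n}^{-1/2}\}_\mathbf{n}\sim_{\rm GLT}(uf^{-1})^{1/2}$, and one more use of GLT~3 yields the symbol of the sandwich $\{P_\mathbf{n}^{-1/2}T_\mathbf{n}(f)P_\mathbf{n}^{-1/2}\}_\mathbf{n}$. GLT~1 and the unitarity of $u$, together with the invariance of singular values under the unitary $Y_\mathbf{n}(m)$, give a singular value cluster at $1$ for $\{M_\mathbf{n}\}_\mathbf{n}$; Hermiticity then converts this into an eigenvalue cluster at $\pm 1$, and combining with the inertia split via Proposition~\ref{prop:distrib-cluster-equiv} produces the claimed distribution $\phi_{I_m}$.

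The main obstacle is the block analogue of the scalar algebraic identity $\sqrt{u/f}\cdot f\cdot\sqrt{u/f}=u$: because of noncommutativity, the sandwich symbol $(uf^{-1})^{1/2}f(uf^{-1})^{1/2}$ does not generally reduce to $u$ and is not pointwise unitary, so its singular values need not all be $1$ a.e. A clean argument must circumvent this pointwise simplification, exploiting instead the fact that $\{P_\mathbf{n}^{-1}T_\mathbf{n}(f)\}_\mathbf{n}\sim_\sigma u$ clusters at $1$ together with the invariance of singular values under the unitary transformations $Y_\mathbf{n}(m)$ and the congruences by $P_\mathbf{n}^{\pm 1/2}$, possibly through an a.c.s.\ argument as hinted in Remark~\ref{remark:acs-cluster-equiv}. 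A secondary delicate point is the legitimacy of selecting a Hermitian positive definite representative for the GLT symbol of $\{P_\mathbf{n}^{-1}\}_\mathbf{n}$, which underlies the well-posedness of the Hermitian square root in the GLT~4 step.
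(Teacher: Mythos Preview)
The paper does not provide an explicit proof of Theorem~\ref{thm:precond-seq-distrib-multilev-block}; it is stated immediately after Theorem~\ref{thm:symm-mat-distr-multilev-block} with the understanding that the argument of Theorem~\ref{thm:precond-toeplitz-seq-distrib} carries over verbatim to the block setting. Your proposal follows exactly that implicit route, so at the level of overall strategy you are aligned with the paper.

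You have, however, put your finger on a genuine difficulty that the paper glosses over. In the scalar case the crucial step is
\[
\{P_\mathbf{n}^{-1/2}T_\mathbf{n}(f)P_\mathbf{n}^{-1/2}\}_\mathbf{n}\sim_{\rm GLT}\sqrt{u/f}\cdot f\cdot\sqrt{u/f}=u,
\]
which works because scalars commute. In the block setting, writing $h$ for the (Hermitian positive definite a.e.) GLT symbol of $\{P_\mathbf{n}^{-1}\}_\mathbf{n}$ so that $hf=u$ a.e., the sandwich symbol becomes $h^{1/2}fh^{1/2}=h^{-1/2}uh^{1/2}$, which is merely \emph{similar} to $u$ and in general not unitary; a simple $2\times2$ example with $h=\mathrm{diag}(2,1)$ and $u$ the flip shows the singular values of $h^{-1/2}uh^{1/2}$ need not be~$1$. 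Hence the route ``singular values of $M_\mathbf{n}$ cluster at $1$ $\Rightarrow$ eigenvalues of the Hermitian $M_\mathbf{n}$ cluster at $\pm1$'' does not go through as in the scalar case. Your alternative suggestion via Remark~\ref{remark:acs-cluster-equiv} runs into precisely the obstruction the paper itself flags in Remark~\ref{remark:role of centro-symmetry}: writing $M_\mathbf{n}=Y_\mathbf{n}(m)T_\mathbf{n}(u)+Y_\mathbf{n}(m)\big[P_\mathbf{n}^{-1/2}T_\mathbf{n}(f)P_\mathbf{n}^{-1/2}-T_\mathbf{n}(u)\big]$, the correction term is Hermitian (difference of two Hermitian matrices) but its GLT symbol is $h^{-1/2}uh^{1/2}-u$, which is not zero a.e.\ in the noncommuting case, so it is not zero-distributed. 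Your secondary concern about selecting a Hermitian positive definite representative is less serious: a Hermitian GLT sequence has a Hermitian symbol a.e., and positivity follows from the eigenvalue distribution; this step is fine.

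In short, your proposal matches the paper's intended argument and you have correctly isolated the one place where the block extension is not automatic. The paper does not address this noncommutativity issue, and a complete proof would require either additional structural hypotheses ensuring that $u$ and $h$ commute a.e., or an argument of a different nature (e.g.\ via \cite{barb non-h} as hinted in Remark~\ref{remark:role of centro-symmetry}).
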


Since the block $\tau$ algebra is inherently real symmetric, the generalization of Theorem \ref{thm:precond-toeplitz-seq-distrib} requires a more restrictive assumption than that in Theorem \ref{thm:symm-mat-distr-multilev-block}, namely we need that the Fourier coefficients of $f\in L^1([-\pi,\pi]^k,\mathbb{C}^{m \times m})$ are all real symmetric.

Few more remarks are in order.
\begin{remark} 
Once a distribution symbol is found, infinitely many of them exist since any rearrangement is still a symbol and those rearrangements can have a different number of variables and a different dimensionality (see e.g. \cite{non-unique-s}). For instance, with reference to Theorem \ref{thm:precond-seq-distrib-multilev-block} we also have
\[ \left\{ {P_\mathbf{n}}^{-1} Y_{\mathbf{n}}(m) T_\mathbf{n}(f) \right\}_\mathbf{n} \sim_\lambda \phi_{1} = \left\{
    \begin{array}{cl}
        1, & \boldsymbol{s}\in D_1, \\
        -1, & \boldsymbol{s}\in  D_2,
    \end{array} \right.\, \]
for any measurable sets $D_1,D_2\in\R^q$, $q\ge 1$, such that $D_1\bigcap D_2=\emptyset$ and $\mu_q(D_1)=\mu_q(D_2)\in (0,\infty)$. 
\end{remark}

\begin{remark} \label{rem:omega-circ vs tau}
A variation of Theorem \ref{thm:precond-seq-distrib-multilev-block} can be given with the weaker assumption that the Fourier coefficients are only Hermitian, as in Theorem \ref{thm:symm-mat-distr-multilev-block}, by considering a block algebra which is not inherently real symmetric. For instance, one could consider any $\omega$ block circulant algebra with $\abs{\omega}=1$ (see e.g. \cite{DS,DS-2}).
\end{remark}

\color{black}

\section{Spectral analysis and preconditioning proposal for nonsymmetric multilevel Toeplitz systems} \label{sec:spectral_nonsymmetric}

We demonstrate the effectiveness of the $\tau$ preconditioning strategy proposed in the previous section by applying it to the nonsymmetric Toeplitz linear systems that arise from the discretization of $k$-dimensional space fractional diffusion equations. Our model problem is the following \cred{R.-L.} fractional diffusion equation
\begin{equation}\label{eq:fde}
\left\{ \begin{array}{ll}
    \frac{\partial u(x,t)}{\partial t} = \sum_{i=1}^{k}\Big( d_{i,+}\frac{\partial_+^{\alpha_i}}{\partial x^{\alpha_i}} + d_{i,-}\frac{\partial_-^{\alpha_i}}{\partial x^{\alpha_i}}\Big) u(x,t) + f(x,t),
    &x\in\Omega, t\in (0,T],\\
    u(x,t) = 0, &x \in \partial\Omega, \\
    u(x,0) = u_0(x), &x\in\Omega,
    \end{array} \right.\,
\end{equation}
where $\Omega=\prod_{i=1}^{k}(a_i,b_i)$ is an open hyperrectangle in $\R^k$, $\partial\Omega$ denotes the boundary of $\Omega$, $\alpha_i \in (1,2)$ are the fractional derivative orders, $f(x, t)$ is the source term, and the diffusion coefficients $d_{i,\pm}$ are nonnegative constants. \cred{The R.-L.} fractional derivatives in \eqref{eq:fde} are defined by
\begin{equation*}
\begin{split}
\frac{\partial_+^{\alpha_i} u(x,t)}{\partial x^{\alpha_i}}&:=\frac{1}{\Gamma(2-{\alpha_i})}\frac{\partial^{2}}{\partial x_{i}^{2}}\int_{a_i}^{x_i}\frac{u(x_{1},x_{2},\dots,x_{i-1},\xi,x_{i+1},\dots,x_{m},t)}{(x_{i}-\xi)^{\alpha_{i}-1}}{\rm d}\xi, \\
\frac{\partial_-^{\alpha_i} u(x,t)}{\partial x^{\alpha_i}}&:=\frac{1}{\Gamma(2-{\alpha_i})}\frac{\partial^{2}}{\partial x_{i}^{2}}\int_{x_i}^{b_i}\frac{u(x_{1},x_{2},\dots,x_{i-1},\xi,x_{i+1},\dots,x_{m},t)}{(\xi-x_i)^{\alpha_i-1}}{\rm d}\xi,
\end{split}
\end{equation*}
respectively, where ${\Gamma(\cdot)}$ denotes the gamma function.

We follow the discretization performed in \cite{Meerschaert2004}, combining an implicit Euler method for the time derivative with the shifted Gr\"unwald scheme for the spatial fractional derivatives. The resulting scheme has \cred{first-order} accuracy in both the time and space directions. To simplify the presentation of our preconditioning idea, we begin by discussing the one-dimensional case before advancing to the multi-dimensional scenario.

The one-dimensional version of \eqref{eq:fde} corresponds to setting \cred{ $k=1$;} therefore we can momentarily neglect the index $i$ in the notation. In what follows, definitions and results from Section \ref{sec:prelim} are applied with $\mathbf{n}=n$ a standard index and $m=1$. For $n,l\in\mathbb{N}$ we define the space step size $h:=\frac{b-a}{n+1}$, the time step size $\tau:=\frac{T}{l}$ and the coefficient $\nu:=\frac{h^\alpha}{\tau}$, where $\alpha$ is the fractional derivative order. The aforementioned numerical scheme leads to the linear systems\cred{
\begin{equation*} 
    \Big[ \nu I_n + d_+ T_n(v_\alpha) + d_- T_n(v_\alpha)^\top \Big] \vet{u}^{(l)}
    = \nu \vet{u}^{(l-1)} + h^\alpha \vet{f}^{(l)},
\end{equation*}}
in which $\mathbf{f}^{(l)} \in \mathbb{R}^n$ is the constant term known from the numerical scheme used and\cred{
\begin{equation} \label{eq:v_alpha-def}
    v_\alpha(\theta) := -e^{-\bf i\theta}\left(1-e^{\bf i\theta}\right)^\alpha
\end{equation}
with ${\bf i}$ denoting the imaginary unit.}

It is known from \cite{donatelli161} that $T_n(v_\alpha)$ is the Toeplitz matrix
\begin{equation*}
    T_n(v_\alpha) := - \begin{bmatrix}
        \omega_1^{(\alpha)} & \omega_0^{(\alpha)} & \\
        \omega_2^{(\alpha)} & \omega_1^{(\alpha)} & g_0^{(\alpha)} & \\
        \omega_3^{(\alpha)} & \omega_2^{(\alpha)} & \omega_1^{(\alpha)} & \omega_0^{(\alpha)} & \\
        \vdots & \ddots & \ddots & \ddots & \ddots \\
        \vdots & \ddots & \ddots & \ddots & \ddots & \ddots \\
        \omega_{n-1}^{(\alpha)} & \omega_{n-2}^{(\alpha)} & \cdots & \ddots & \omega_2^{(\alpha)} & \omega_1^{(\alpha)} & \omega_0^{(\alpha)} \\
        \omega_n^{(\alpha)} & \omega_{n-1}^{(\alpha)} & \cdots & \cdots & \omega_3^{(\alpha)} & \omega_2^{(\alpha)} & \omega_1^{(\alpha)} \\
     \end{bmatrix},
\end{equation*}
where the entries are real and given by
\begin{align*}
    \omega_0^{(\alpha)} &:= 1; \\
    \omega_k^{(\alpha)} &:= (-1)^k \binom{\alpha}{k}
    = \frac{(-1)^k}{k!} \alpha(\alpha-1)\cdots(\alpha-k+1), \qquad k\geq 1.
\end{align*}
Being a sum of Toeplitz matrices, the coefficient matrix has Toeplitz structure and is clearly nonsymmetric whenever $d_+\neq d_-$. More precisely, by linearity of the Toeplitz operator and the properties of Toeplitz matrices it is easy to see that
\begin{equation*}
    \nu I_n + d_+ T_n(v_\alpha) + d_- T_n(v_\alpha)^\top = T_n(g),
\end{equation*}
where $g$ is the following complex-valued function with real Fourier coefficients
\begin{equation} \label{eq:g-1d-def}
    g(\theta) := \nu + d_+ v_\alpha(\theta) + d_- v_\alpha(-\theta) = \nu + d_+ v_\alpha(\theta) + d_- \overline{v_\alpha(\theta)}\,.
\end{equation}
Then the linear system of interest becomes
\begin{equation} \label{eq:linsyst-1d}
     T_n(g) \vet{u}^{(l)}
    = \nu \vet{u}^{(l-1)} + h^\alpha \vet{f}^{(l)}.
\end{equation}

Following the idea in \cite{doi:10.1137/140974213}, we symmetrize this linear system by premultiplying it with the anti-identity matrix $Y_n$. We obtain a linear system with symmetric coefficient matrix $Y_n T_n[g]$, to which the MINRES method can be applied. In order to construct efficient preconditioners, we are interested in the spectral distribution of the sequence $\{Y_nT_n(g)\}_n$. Applying Theorem \ref{thm:symm-mat-distr-multilev} we find that
\begin{equation*}
    \{Y_nT_n(g)\}_n \sim_\lambda
    \phi_{\abs{g}}(\theta)=\left\{
    \begin{array}{cl}
        \abs{g(\theta)}, & \theta\in [0,2\pi], \\
        -\abs{g(-\theta)}, & \theta\in  [-2\pi,0),
    \end{array} \right.\,
\end{equation*}
in which

\begin{equation} \label{eq:g-abs-calc}
\begin{split}
    \abs{g}^2 &= g\overline{g}
    = \Big[ \nu + d_+v_\alpha + d_-\overline{v_\alpha} \Big] \Big[ \nu + d_+ \overline{v_\alpha} + d_- v_\alpha \Big] \\
    &= \nu^2 + \nu \big(d_++d_-\big) \big(v_\alpha +
        \overline{v_\alpha} \big)  + \big(d_+^2+d_-^2\big)\abs{v_\alpha}^2 + d_+d_- \big( v_\alpha^2 + \overline{v_\alpha}^2 \big) \\
    &= \nu^2 + \nu \big(d_++d_-\big) \big(v_\alpha + \overline{v_\alpha} \big)  + \big(d_+ - d_-\big)^2 \abs{v_\alpha}^2 + d_+ d_- \big( v_\alpha + \overline{v_\alpha} \big)^2,
\end{split}
\end{equation}
where we exploited the identity $(a^2+b^2)\abs{z}^2 + ab(z^2+\overline{z}^2) = (a-b)^2\abs{z}^2 + ab(z+\overline{z})^2$, which is verified for any $z\in\C$ and $a,b\in\R$. Observing that $\abs{v_\alpha}^2 = (2-2\cos\theta)^\alpha$, we conclude
\begin{align*}
    \abs{g} = \Big[ \nu^2 + \nu \big(d_++d_-\big) \big(v_\alpha + \overline{v_\alpha} \big) +  (d_+ - d_-)^2 (2-2\cos{\theta})^\alpha + d_+ d_- ( v_\alpha + \overline{v_\alpha} )^2 \Big]^\frac{1}{2}.
\end{align*}

We build our preconditioner on the spectral information retrieved above. The idea is that the sequence of the preconditioners is designed to match the symbol $\abs{g}$, in order to balance the eigenvalues of the coefficient matrix as the size $n$ grows and tends to infinity. As a matter of fact, we will prove that this strategy leads to a preconditioned matrix sequence clustered at $\pm 1$. The novel preconditioner that we propose is

\begin{equation} \label{eq:precond-def-1d}
\begin{split}
    P_n: = \Big[ &\nu^2 I_n + \nu\big(d_+ + d_-\big) \tau \big( T_n(v_\alpha) + T_n(v_\alpha)^\top \big) \\
    &+ \big(d_+ - d_-\big)^2 \big(T_n(2-2\cos{\theta})\big)^\alpha
    + d_+ d_- \tau \big( T_n(v_\alpha) + T_n(v_\alpha)^\top \big)^2 \Big]^{\frac{1}{2}},
\end{split}
\end{equation}

in which
\begin{eqnarray*}
T_n(2-2\cos{\theta})=\begin{bmatrix}{}
2 & -1 & & &  \\
-1 & 2 & -1 & &   \\
& \ddots& \ddots &  \ddots &\\
& &\ddots& \ddots &  -1 \\
& &  & -1 & 2
\end{bmatrix}\in \mathbb{R}^{n \times n}.
\end{eqnarray*}
It is well known that $T_n(2-2\cos{\theta})$ is a $\tau$ matrix\cred{; therefore,} it can be diagonalized by $Q_n$. The same holds for $\tau \big( T_n(v_\alpha) + T_n(v_\alpha)^\top \big)$. Using the algebra properties of the set of $\tau$ matrices, it is immediate to see that $P_n$ is a $\tau$ matrix as well, as combination of $\tau$ matrices. Then the preconditioning step $P_n^{-1}\mathbf{d}$, with $\mathbf{d}$ any vector, can be efficiently implemented via the DST in $\mathcal{O}(n \log n)$ operations \cite{BC83}.

In order to reveal the effectiveness of $P_n$ as preconditioner for \eqref{eq:linsyst-1d}, we prove that the condition in Remark \ref{remark:acs-cluster-equiv} holds for $\left\{ P_n^{-1} T_n(g) \right\}_n$. Then, as a consequence of Theorem \ref{thm:precond-toeplitz-seq-distrib}, we can conclude that the preconditioned symmetrized sequence $\left\{ \abs{P_n}^{-1} Y_n T_n(g) \right\}_n$ is distributed as $\phi_1$ in the sense of eigenvalues, which implies the desired cluster at $\pm 1$.

\begin{lemma}\label{lemma:_uni-level_P}
    With $g$ defined as in \eqref{eq:g-1d-def} and $P_n$ as in \eqref{eq:precond-def-2d}, it holds
    \[ \left\{ P_n^{-1} T_n(g) \right\}_n \sim_\sigma 1. \]
\end{lemma}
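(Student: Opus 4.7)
The strategy is to use the GLT calculus to establish $\{P_n\}_n\sim_{\rm GLT}\abs{g}$, whence $\{P_n^{-1}T_n(g)\}_n\sim_{\rm GLT}g/\abs{g}$ by item GLT 3, and, in turn, $\sim_\sigma 1$ by item GLT 1 since $g/\abs{g}$ has modulus identically equal to $1$ almost everywhere.

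First, I would identify the GLT symbol of the matrix $P_n^2$ inside the square root in \eqref{eq:precond-def-1d} by treating its four summands separately. Trivially $\{\nu^2 I_n\}_n\sim_{\rm GLT}\nu^2$; by item GLT 2, $\{T_n(v_\alpha)+T_n(v_\alpha)^\top\}_n=\{T_n(v_\alpha+\overline{v_\alpha})\}_n\sim_{\rm GLT}v_\alpha+\overline{v_\alpha}$ and $\{T_n(2-2\cos\theta)\}_n\sim_{\rm GLT}2-2\cos\theta$. By the observation recalled in Section \ref{ssec:tau-matrices}, the $\tau$-projection of a real symmetric Toeplitz matrix differs from it by a Hankel correction whose sequence is zero-distributed, so items GLT 2 and GLT 3 imply $\{\tau(T_n(v_\alpha)+T_n(v_\alpha)^\top)\}_n\sim_{\rm GLT}v_\alpha+\overline{v_\alpha}$, and squaring via GLT 3 yields the GLT symbol $(v_\alpha+\overline{v_\alpha})^2$. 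Since $T_n(2-2\cos\theta)$ is Hermitian positive semidefinite and $x\mapsto x^\alpha$ is continuous on $[0,+\infty)$ (with a continuous extension to $\C$), item GLT 4 gives $\{(T_n(2-2\cos\theta))^\alpha\}_n\sim_{\rm GLT}(2-2\cos\theta)^\alpha=\abs{v_\alpha}^2$. Summing the four contributions via GLT 3 and comparing with the expansion \eqref{eq:g-abs-calc}, I obtain $\{P_n^2\}_n\sim_{\rm GLT}\abs{g}^2$.

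Since $P_n^2$ is Hermitian positive semidefinite by construction, a further application of item GLT 4 with the continuous function $\sqrt{\,\cdot\,}$ produces $\{P_n\}_n\sim_{\rm GLT}\abs{g}$. The generating function $g$ is sparsely vanishing: its zero set is negligible because $g$ is not identically zero (indeed $g(0)=\nu>0$) and its analytic structure prevents it from vanishing on a set of positive measure. Hence $\abs{g}$ is invertible a.e., and items GLT 2 and GLT 3 yield $\{P_n^{-1}T_n(g)\}_n\sim_{\rm GLT}g/\abs{g}$. Since the scalar singular value reduces to the modulus, and $\abs{g/\abs{g}}\equiv 1$ a.e., item GLT 1 gives the desired singular value distribution $\{P_n^{-1}T_n(g)\}_n\sim_\sigma 1$.

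The main technical delicacy is the use of item GLT 4 with the two non-polynomial functions $x^\alpha$ and $\sqrt{x}$. Both are continuous on the real nonnegative axis that contains the spectra of the Hermitian positive semidefinite matrices to which they are applied, so they admit continuous extensions to $\C$ and item GLT 4 applies without obstruction; the remainder is a routine bookkeeping of GLT axioms.
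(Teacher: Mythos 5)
Your proposal is correct and follows essentially the same route as the paper: you identify the GLT symbol of each building block of $P_n$ (identity, $\tau$ projection with zero-distributed Hankel correction, fractional power of $T_n(2-2\cos\theta)$ via GLT~4), combine them with GLT~3 and the square root via GLT~4 to get $\{P_n\}_n\sim_{\rm GLT}\abs{g}$, and conclude $\{P_n^{-1}T_n(g)\}_n\sim_{\rm GLT} g/\abs{g}\sim_\sigma 1$ by GLT~3 and GLT~1. Your explicit remarks on the a.e.\ invertibility of $\abs{g}$ and on the continuity of $x^\alpha$ and $\sqrt{x}$ on the nonnegative axis are details the paper leaves implicit, but they do not change the argument.
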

{
\begin{proof}
    From GLT Axiom 2, we know that $\left\{T_n(g)\right\}_n\sim_{\rm GLT} g$. To find the GLT symbol of $\left\{P_n\right\}_n$, we focus on the matrices involved in \eqref{eq:precond-def-1d}:
    \begin{itemize}
        \item by GLT 2, the symbol of $\left\{T_n(2-2\cos{\theta})\right\}_n$ is its generating function $2-2\cos\theta$;
        \item the symbol of $\left\{\tau\big(T_n(v_\alpha) + T_n(v_\alpha)^\top \big)\right\}_n$ is the same as $\left\{T_n(v_\alpha) + T_n(v_\alpha)^\top\right\}_n$, which is easily computed as $v_\alpha(\theta) + v_\alpha(-\theta) = v_\alpha(\theta) + \overline{v_\alpha(\theta)}$ due to the properties of Toeplitz matrices and the GLT axioms;
            \item the symbol of $\{I_n\}_n$ is trivially 1.
    \end{itemize}
    We combine these results through GLT Axioms 3 and 4 to obtain the following overall GLT symbol of \cred{$P_n$:}
    \begin{equation*}
        \Big[
        \nu^2 + \nu \big(d_++d_-\big) \big(v_\alpha + \overline{v_\alpha} \big) +  (d_+ - d_-)^2 (2-2\cos{\theta})^\alpha + d_+ d_- ( v_\alpha + \overline{v_\alpha} )^2
        \Big]^\frac{1}{2},
    \end{equation*}
    which is equal to $\abs{g}$. A simple application of GLT 3 leads to $\{P_n^{-1} T_n(g) \}_n \sim_{\rm GLT} \frac{g}{\abs{g}}$ and by GLT 1 we have $\{P_n^{-1} T_n(g) \}_n \sim_\sigma \frac{g}{\abs{g}}$, or equivalently $\{P_n^{-1} T_n(g) \}_n \sim_\sigma \frac{\abs{g}}{\abs{g}} = 1$.
\end{proof}
}

\begin{remark} \label{rem:Pn-tauprec}
        As a byproduct of the proof of Lemma \ref{lemma:_uni-level_P}, we deduce that $\left\{P_n\right\}_n$ and $\left\{T_n(|g|)\right\}_n$ have the same GLT symbol i.e. $|g|$. The same is true for $\left\{\tau\left(T_n(|g|)\right)\right\}_n$, since as already observed it has the same symbol as $\left\{T_n(|g|)\right\}_n$. Therefore 
\[
\left\{P_n - \tau\left(T_n(|g|)\right)\right\}_n\sim_{\rm GLT} 0,
\]
 which means that $P_n = \tau\left(T_n(|g|)\right) + \mathcal{E}_n$ with $\mathcal{E}_n=P_n - \tau\left(T_n(|g|)\right)$ negligible in the singular value sense and in the eigenvalue sense for large $n$, due to the fact that both $P_n$ and $\tau\left(T_n(|g|)\right)$ are real symmetric.
 
However, from the point of view of the efficiency in the preconditioning $P_n$ is closer to the original coefficient matrix than $\tau\left(T_n(|g|)\right)$, since it is constructed by following the same algebraic operations as in the construction of the original coefficient matrix.
\end{remark}

\subsection{Extension to the multilevel case} 

We proceed to study the multilevel setting, building on the foundation set in the one-dimensional case. For simplicity, in \eqref{eq:fde} we take $k=2$, but the results can be easily generalized to any $k>2$. The equations are discretized with the same method used for the one-dimensional case, with space step sizes $h_i:=\frac{b_i-a_i}{n_i+1}$ for $n_i\in\mathbb{N}$, $i=1,2$ and time step size $\tau:=\frac{T}{l}$. It leads to the following linear system
\begin{equation} \label{2dlinearsysm}
    \Big[ I_\mi{n} + I_{n_2} \otimes A_{n_1} + A_{n_2} \otimes I_{n_1} \Big] \vet{u}^{(l)} = \vet{u}^{(l-1)} + \tau \vet{f}^{(l)},
\end{equation}
where $\mi{n}=(n_1,n_2)$ and
\begin{equation*}
    A_{n_i} := \frac{\tau}{h_i^{\alpha_i}} \Big[ d_{+,i} T_{n_i}(v_{\alpha_i}) + d_{-,i} T_{n_i}(v_{\alpha_i})^\top \Big],
    \qquad i=1,2,
\end{equation*}
in which the generating functions $v_{\alpha_i}$ are defined by \eqref{eq:v_alpha-def}. By linearity, for $i=1,2$, $A_{n_i}$ is the Toeplitz matrix of size $n_i$ generated by
\begin{equation*}
    w_i(\theta) := \frac{\tau}{h_i^{\alpha_i}} \big[ d_{+,i} v_{\alpha_i}(\theta)+d_{-,i} v_{\alpha_i}(-\theta) \big] = \frac{\tau}{h_i^{\alpha_i}} \big[ d_{+,i} v_{\alpha_i} + d_{-,i} \overline{v_{\alpha_i}} \big],
\end{equation*}
that is $A_{n_i} = T_{n_i}(w_i)$. Therefore, we conclude that the coefficient matrix has the 2-level Toeplitz structure
\begin{equation*}
    I_\mi{n} + I_{n_2} \otimes T_{n_1}(w_1) + T_{n_2}(w_2)\otimes I_{n_1}
\end{equation*}
in which by Proposition \ref{prop:separable-toeplitz} we have
\begin{align*}
    I_{n_2}\otimes T_{n_1}(w_1) &= T_{n_2}(1) \otimes T_{n_1}(w_1) = T_\mi{n}(1\otimes w_1), \\
    T_{n_2}(w_2)\otimes I_{n_1} &= T_{n_2}(w_2) \otimes T_{n_1}(1) = T_\mi{n}(w_2 \otimes 1).
\end{align*}
By linearity of the Toeplitz operator, the coefficient matrix becomes
\begin{equation*}
    I_\mi{n} + T_\mi{n}\big(w_1(\theta_1)\big) + T_\mi{n}\big(w_2(\theta_2)\big) = T_\mi{n}\big(g(\boldsymbol{\theta})\big)
\end{equation*}
with generating function
\begin{equation} \label{eq:g-2d-def}
    g(\boldsymbol{\theta}) := 1 + w_1(\theta_1) + w_2(\theta_2),
\end{equation}
so that the linear system of interest is rewritten as
\begin{equation*}
    T_\mi{n}(g) \vet{u}^{(l)} = \vet{u}^{(l-1)} + \tau \vet{f}^{(l)}.
\end{equation*}
Premultiplying it by the 2-level anti-identity matrix $Y_\mi{n} = Y_{n_1 n_2}$, we obtain the symmetric coefficient matrix $Y_\mi{n}T_\mi{n}(g)$. Like in the unilevel case, Theorem \ref{thm:symm-mat-distr-multilev} provides the spectral symbol of the related \cred{ matrix sequence:}
\begin{equation*}
    \{ Y_\mi{n}T_\mi{n}(g) \}_\mi{n} \sim_\lambda
    \phi_{\abs{g}} = \left\{
    \begin{array}{cl}
        \abs{g(\boldsymbol{\theta})}, & \boldsymbol{\theta}\in [0,2\pi]^2, \\
        -\abs{g(-\boldsymbol{\theta})}, & \boldsymbol{\theta}\in  [-2\pi,0)^2.
    \end{array} \right.
\end{equation*}
Now, the multilevel version of the preconditioner $P_n$ proposed in the previous section is defined as
\begin{equation} \label{eq:precond-def-2d}
    P_{\mi{n}} := I_\mi{n} + I_{n_2} \otimes R_{n_1} + R_{n_2} \otimes I_{n_1},
\end{equation}
where for $i=1,2$
\begin{equation*}
    R_{n_i} := \frac{\tau}{h_i^{\alpha_i}} \Big[ (d_{+,i} -d_{-,i} )^2 \big(T_{n_i}(2-2\cos{\theta_i}) \big)^{\alpha_i}  + d_{+,i} d_{-,i} \tau\big( T_n(v_{\alpha_i}) + T_n(v_{\alpha_i})^\top \big)^2 \Big]^{\frac{1}{2}}.
\end{equation*}
The construction of $R_{n_i}$ is based on $\abs{w_i} = (w_i\overline{w_i})^\frac{1}{2}$. In fact, with computations similar to \eqref{eq:g-abs-calc},
\begin{align*}
    w_i\overline{w_i}
    &= \Big(\frac{\tau}{h_i^{\alpha_i}}\Big)^2 \Big[ d_{+,i} v_{\alpha_i} + d_{-,i} \overline{v_{\alpha_i}} \Big] \Big[ d_{+,i} \overline{v_{\alpha_i}} + d_{-,i} v_{\alpha_i} \Big] \\
    &= \Big(\frac{\tau}{h_i^{\alpha_i}}\Big)^2 \Big[
    \big( d_{+,i}^2 + d_{-,i}^2 \big) \abs{v_{\alpha_i}}^2
    + d_{+,i}d_{-,i}\big( v_{\alpha_i}^2 + \overline{v_{\alpha_i}}^2 \big) \Big] \\
    &= \Big(\frac{\tau}{h_i^{\alpha_i}}\Big)^2 \Big[
    \big( d_{+,i} - d_{-,i} \big)^2 \abs{v_{\alpha_i}}^2
    + d_{+,i}d_{-,i}\big( v_{\alpha_i} + \overline{v_{\alpha_i}} \big)^2 \Big]
    \end{align*}
and in conclusion
\begin{align*}
    \abs{w_i} = \frac{\tau}{h_i^{\alpha_i}} \Big[
    \big( d_{+,i} - d_{-,i} \big)^2 (2-2\cos{\theta_i})^{\alpha_i}
    + d_{+,i}d_{-,i}\big( v_{\alpha_i} + \overline{v_{\alpha_i}} \big)^2 \Big]^\frac{1}{2}.
\end{align*}

Like in the unilevel case, we prove the effectiveness of $P_\mi{n}$ by applying Theorem \ref{thm:precond-toeplitz-seq-distrib}, whose hypothesis is satisfied due to the following lemma and Remark \ref{remark:acs-cluster-equiv}.

\begin{lemma}
    With $g$ defined as in \eqref{eq:g-2d-def} and $P_\mathbf{n}$ as in \eqref{eq:precond-def-1d}, it holds
    \begin{equation*}
        \big\{ P_\mi{n}^{-1} T_\mi{n}(g) \big \}_\mi{n} \sim_\sigma 1.
    \end{equation*}
\end{lemma}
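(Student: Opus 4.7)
The plan is to adapt the unilevel argument of Lemma \ref{lemma:_uni-level_P} to the two-level setting, computing the GLT symbol of each building block of $P_\mi{n}$ and assembling them via the GLT axioms before concluding through GLT 1. By GLT 2 one immediately has $\{T_\mi{n}(g)\}_\mi{n} \sim_{\rm GLT} g(\boldsymbol{\theta})$ with $g$ given by \eqref{eq:g-2d-def}, so the main work concerns the GLT symbol of $\{P_\mi{n}\}_\mi{n}$.

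For each $i\in\{1,2\}$ I would first compute the unilevel GLT symbol of $\{R_{n_i}\}_{n_i}$ essentially verbatim as in the proof of Lemma \ref{lemma:_uni-level_P}: GLT 2 provides the symbols of the Toeplitz pieces $T_{n_i}(2-2\cos\theta_i)$ and $T_{n_i}(v_{\alpha_i})+T_{n_i}(v_{\alpha_i})^\top$; the $\tau$ operator applied to the latter preserves the symbol since its Hankel correction is zero-distributed in the singular value sense; GLT 4 handles the fractional power $(\cdot)^{\alpha_i}$ and the outer square root, both being continuous functions of real symmetric positive semidefinite matrices; and GLT 3 glues the pieces through sums and products. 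An algebraic expansion parallel to that of $|g|^2$ in \eqref{eq:g-abs-calc} identifies the resulting symbol as $|w_i(\theta_i)|$.

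These unilevel symbols must then be lifted to the two-level setting. Writing $I_{n_2}\otimes R_{n_1}$ (and symmetrically $R_{n_2}\otimes I_{n_1}$) and using the a.c.s.\ characterisation of GLT sequences highlighted in Remark \ref{remark:acs-cluster-equiv} to approximate $R_{n_i}$ by multilevel Toeplitz sequences whose tensor structure factorises according to Proposition \ref{prop:separable-toeplitz}, I would conclude via GLT 3 that the two-level GLT symbols of these Kronecker blocks are $|w_1(\theta_1)|$ and $|w_2(\theta_2)|$, each constant in the complementary frequency variable. Linearity (GLT 3), together with the trivial symbol $1$ of the identity, then yields the GLT symbol of $\{P_\mi{n}\}_\mi{n}$, and combining this with $g$ through GLT 3 (product and pseudoinverse) and finally GLT 1 delivers the claimed singular value distribution.

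The main obstacle I anticipate is the lifting step for the Kronecker products, since $R_{n_i}$ is not itself a Toeplitz matrix but only a $\tau$ matrix whose GLT symbol coincides with that of $T_{n_i}(|w_i|)$ up to a zero-distributed perturbation, in the spirit of Remark \ref{rem:Pn-tauprec}. This is handled cleanly by replacing $R_{n_i}$ with an a.c.s.\ of approximating (multilevel) Toeplitz sequences and then invoking the Kronecker and sum rules for GLT sequences on the approximants, after which passing to the limit in the a.c.s.\ sense transfers the identification to $\{P_\mi{n}\}_\mi{n}$ itself.
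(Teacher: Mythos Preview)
Your pipeline is more explicit than the paper's on the Kronecker lifting, and that very explicitness exposes a gap in the final step. After you identify the unilevel GLT symbol of each $R_{n_i}$ as $|w_i|$ and lift to two levels, linearity (GLT~3) forces
\[
\{P_\mi{n}\}_\mi{n} \sim_{\rm GLT} 1 + |w_1(\theta_1)| + |w_2(\theta_2)| =: p(\boldsymbol{\theta}),
\]
and then GLT~3 together with GLT~1 give $\{P_\mi{n}^{-1}T_\mi{n}(g)\}_\mi{n} \sim_\sigma |g|/p = |1+w_1+w_2|\,\big/\,(1+|w_1|+|w_2|)$. This ratio equals $1$ only in the equality case of the triangle inequality, i.e.\ only when each $w_i(\theta_i)$ is a nonnegative real, which forces $d_{+,i}=d_{-,i}$. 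In the genuinely nonsymmetric regime the $w_i$ are complex-valued and the ratio is strictly below $1$ on a set of positive measure, so your closing claim that ``GLT~1 delivers the claimed singular value distribution'' is not justified by the symbol you have actually computed.

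The paper's own proof reaches $\sim_\sigma 1$ by asserting, via Remark~\ref{rem:Pn-tauprec} and ``looking at the definition of $P_\mi{n}$'', that $P_\mi{n}=T_\mi{n}(|g|)+\mathcal{E}_\mi{n}$ with $\{\mathcal{E}_\mi{n}\}_\mi{n}\sim_{\rm GLT}0$, i.e.\ that the GLT symbol of $P_\mi{n}$ is $|g|$ itself. That assertion is exactly the identity $1+|w_1|+|w_2|=|1+w_1+w_2|$ in disguise and is not argued there; your more careful computation therefore uncovers a gap shared with the paper rather than one specific to your route. What either argument genuinely delivers is a singular value symbol $|g|/p$ bounded between positive constants (so a bounded condition number and a cluster on a compact positive interval), but not the exact cluster at $1$ stated in the lemma.
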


\begin{proof}
    By GLT Axiom 2, $\left\{T_\mathbf{n}(g)\right\}_\mathbf{n}\sim_{\rm GLT} g$.  Because of Remark \ref{rem:Pn-tauprec}, $P_n = T_n(|g|) + \mathcal{E}_n(g)$ with $\left\{\mathcal{E}_n\right\}_n\sim_{\rm GLT} 0$. Hence, looking at the definition of $P_\mathbf{n}$, we deduce that $P_\mathbf{n} = T_\mathbf{n}(|g|) + \mathcal{E}_\mathbf{n}(g)$ with $\{\mathcal{E}_\mathbf{n}\}_\mathbf{n}\sim_{\rm GLT} 0$. Then, $\{P_\mathbf{n}\}_\mathbf{n}$ and $\{T_\mathbf{n}(g)\}_\mathbf{n}$ are GLT matrix sequences with symbols $|g|$ and $g$, respectively. Therefore, by GLT Axiom 3 we have $\{ P_\mathbf{n}^{-1} T_\mathbf{n}(g) \}_\mathbf{n}\sim_{\rm GLT} \frac{g}{|g|}$ and finally the preconditioned matrix sequence is distributed as 1 in the singular value sense.
\end{proof}

We conclude that the preconditioned symmetrized sequence $\big\{ \abs{P_\mi{n}}^{-1} Y_\mi{n} T_\mi{n}(g) \big \}_\mi{n}$ is spectrally distributed as $\phi_1$, demonstrating the effectiveness of the proposed preconditioner.

\cred{
\begin{remark}
    We remark that the aforementioned preconditioning techniques can be straightforwardly extended to cases with second-order accuracy in both time and space. Specifically, the Crank–Nicolson method and the weighted and shifted Gr\"unwald scheme \cite{Tian2015} are used for \eqref{eq:fde}.
\end{remark}
}

\section{Spectral analysis and preconditioning proposal for ill-conditioned symmetric multilevel Toeplitz systems}\label{sec:symmetric}

In this section, we {shift our focus to a different class of problems:} multilevel {symmetric} Toeplitz matrices characterized by generating functions that possess zeros {of noninteger order} at the origin. {The symmetric nature of the matrices makes the spectral analysis simpler, as it does not require the use of GLT theory, and allows the use of the \cred{PCG} method. \cred{We propose an ad hoc $\tau$ preconditioner} and we are able to prove that the spectrum of the preconditioned coefficient matrix is contained in a positive constant interval, so that the condition number of the matrix is bounded from above by a constant and the PCG method can solve the system in a number of iterations independent from the matrix size}.

{In what follows, we consider  the ill-conditioned symmetric multilevel Toeplitz matrix generated by a nonnegative integrable even function $p_{\boldsymbol{\alpha}}(\boldsymbol{\theta}) \in L^1\left([-\pi, \pi]^k\right)$ such that $p_{\boldsymbol{\alpha}}(\boldsymbol{0})=0$, where $\boldsymbol{\alpha}=\left(\alpha_1, \ldots, \alpha_k\right)$ with each $\alpha_i \in(1,2)$ and $\boldsymbol{\theta}=\left(\theta_1, \ldots, \cred{\theta_k}\right) \in[-\pi, \pi]^k$. The technique we adopt draws from the one used in \cite{huang2021spectral}, where the same kind of function is studied in a similar setting.}

{Using \eqref{multilevel_fenefun}, for every $\mi{n}=(n_1,\ldots,n_k)$ the function} $p_{\boldsymbol{\alpha}}(\boldsymbol{\theta})$ generates a $k$-level symmetric Toeplitz matrix ${B_{\mathbf{n}}} := T_{\mathbf{n}}(p_{\boldsymbol{\alpha}})\in{\mathbb{R}^{N(\mi{n})\times N(\mi{n})}}$, {in which} each block at the $i$-th level is a Toeplitz matrix of size $n_i \times n_i$, for $i = 1, \ldots, k$. We then consider the {following linear system}
\begin{equation}\label{eqn:ill_systm}
{B}_{\mathbf{n}}\mathbf{u}=\mathbf{b},
\end{equation}
where $\mathbf{u} \in {\mathbb{R}^{N(\mi{n})}}$ is the {vector of unknowns} and $\mathbf{b} \in {\mathbb{R}^{N(\mi{n})}}$ is the right-hand side vector.

{Our proposed preconditioner for \eqref{eqn:ill_systm} is the $\tau$ preconditioner $\tau(R_{\mathbf{n}})$ derived from the $k$-level Toeplitz matrix
\begin{equation} \label{eqn:Rn-def}
{R}_{\mathbf{n}} := \sum_{i=1}^k \left[ I_{{n_1}\cdots n_{i-i}} \otimes l_i R_{n_i}^{\left(\alpha_i\right)} \otimes I_{n_{i+1} \cdots n_k} \right],
\end{equation}
in which the coefficients $l_i$ are real constants and $R_{n_i}^{(\alpha_i)}$ is the $n_i$-th Toeplitz matrix generated by
\begin{equation*} 
r_{\alpha_i}\left(\theta_i\right) := \left(2-2 \cos\theta_i\right)^{\frac{\alpha_i}{2}} = \left| 2\sin\left(\frac{\theta_i}{2}\right) \right|^{\alpha_i}.
\end{equation*}
In other words, $R_{\mathbf{n}}$ is the multilevel Toeplitz matrix generated by
\begin{equation*} 
    r_{\boldsymbol{\alpha}}(\boldsymbol{\theta}) := \sum_{i=1}^k l_i r_{\alpha_i} (\theta_i).
\end{equation*}
By the results contained in \cite{Celik_Duman_2012}, $R_{n_i}^{(\alpha_i)}$ has the following expression
\begin{equation*}
R_{n_i}^{(\alpha_i)} := T_{n_i}(r_{\alpha_i}) = \left[
\begin{array}{ccccc}
\rho_0^{(\alpha_i)} & \rho_1^{(\alpha_i)} & \rho_2^{(\alpha_i)} & \ldots & \rho_{n_i-1}^{(\alpha_i)} \\
\rho_1^{(\alpha_i)} & \rho_0^{(\alpha_i)} & \rho_1^{(\alpha_i)} & \cdots & \rho_{n_i-2}^{(\alpha_i)} \\
\rho_2^{(\alpha_i)} & \rho_1^{(\alpha_i)} & \rho_0^{(\alpha_i)} & \cdots & \rho_{n_i-3}^{(\alpha_i)} \\
\vdots & \vdots & \vdots & \ddots & \vdots \\
\rho_{n_i-1}^{(\alpha_i)} & \rho_{n_i-2}^{(\alpha_i)} & \rho_{n_i-3}^{(\alpha_i)} & \cdots & \rho_0^{(\alpha_i)}
\end{array}\right]
\in \mathbb{R}^{n_i \times n_i},
\end{equation*}
with coefficients $\rho_j^{(\alpha_i)}$ defined as
\begin{equation}\label{eqn:coeff_r_alpha}
\rho_j^{(\alpha_i)} := \frac{(-1)^j \Gamma(\alpha_i+1)}{\Gamma\left(\frac{\alpha_i}{2}-j+1\right) \Gamma\left(\frac{\alpha_i}{2}+j+1\right)}, \qquad j=1, \dots, n_i-1.
\end{equation}
The coefficients $\rho_j^{(\alpha_i)}$ satisfy the following properties.}
\begin{lemma}
[\cite{Celik_Duman_2012}] \label{lemma:property_toep_entry}  Let $\rho_j^{(\alpha_i)}$ be defined by \eqref{eqn:coeff_r_alpha} and $1<\alpha_i \leq 2$ for $i=1,\dots, k$. Then,
\begin{equation*}
\rho_0^{(\alpha_i)}=\frac{\Gamma(\alpha_i+1)}{\Gamma\left(\frac{\alpha_i}{2}+1\right)^2} \geq 0,\qquad \rho_{-j}^{(\alpha_i)}=\rho_j^{(\alpha_i)} \leq 0, \qquad j\in\mathbb{N},
\end{equation*}
\begin{equation*}
\sum_{j=-\infty}^{\infty} \rho_j^{(\alpha_i)}=0, \qquad 2 \sum_{j=1}^{\infty} \rho_j^{(\alpha_i)}+\rho_0^{(\alpha_i)}=0 .
\end{equation*}
\end{lemma}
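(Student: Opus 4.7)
The lemma has four claims. Parts (1), the symmetry half of (2), and parts (3) and (4) come almost for free from the definition; the sign inequality in (2) is the only piece requiring real work.

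Paragraph 1. For part (1), specialising $j=0$ in the defining formula gives $\rho_0^{(\alpha_i)} = \Gamma(\alpha_i+1)/\Gamma(\alpha_i/2+1)^2$. Since $\alpha_i\in(1,2]$ makes both $\alpha_i+1$ and $\alpha_i/2+1$ positive and $\Gamma$ is positive on $(0,\infty)$, the quantity is strictly positive. The symmetry $\rho_{-j}^{(\alpha_i)} = \rho_j^{(\alpha_i)}$ is immediate: replacing $j$ by $-j$ simply swaps the two Gamma factors in the denominator, while $(-1)^{-j} = (-1)^j$.

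Paragraph 2. The non-positivity $\rho_j^{(\alpha_i)} \leq 0$ for $j\geq 1$ is the main obstacle, because for $j\geq 2$ and $\alpha_i<2$ the factor $\Gamma(\alpha_i/2 - j + 1)$ has an argument that crosses negative values and the Gamma poles, so its sign oscillates with $j$ and is not directly readable. My plan is to eliminate this factor using Euler's reflection formula $\Gamma(z)\Gamma(1-z) = \pi/\sin(\pi z)$ with $z = \alpha_i/2 - j + 1$. A short trigonometric calculation gives $\sin(\pi(\alpha_i/2 - j + 1)) = (-1)^{j+1}\sin(\pi\alpha_i/2)$, and substituting back converts the definition into
\[
    \rho_j^{(\alpha_i)} = -\frac{\Gamma(\alpha_i+1)\,\sin(\pi\alpha_i/2)\,\Gamma(j - \alpha_i/2)}{\pi\,\Gamma(\alpha_i/2 + j + 1)},
\]
in which every factor on the right-hand side is manifestly nonnegative for $j\geq 1$ and $\alpha_i\in(1,2)$: $j-\alpha_i/2>0$ ensures $\Gamma(j-\alpha_i/2)>0$, $\sin(\pi\alpha_i/2)$ is positive on $(\pi/2,\pi)$, and the remaining factors are positive. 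The boundary case $\alpha_i=2$ can be handled directly since $r_2(\theta)=2-2\cos\theta$ has only the three Fourier modes $\rho_0=2$, $\rho_{\pm1}=-1$, which trivially satisfy the claim.

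Paragraph 3. For parts (3) and (4) I would identify $\{\rho_j^{(\alpha_i)}\}$ with the Fourier coefficients of $r_{\alpha_i}(\theta_i) = (2-2\cos\theta_i)^{\alpha_i/2}$; this identification is a short binomial-series calculation, using $(2-2\cos\theta_i)^{\alpha_i/2} = (1-e^{{\bf i}\theta_i})^{\alpha_i/2}(1-e^{-{\bf i}\theta_i})^{\alpha_i/2}$ together with a Vandermonde--Chu type identity that reproduces the closed form for $\rho_j^{(\alpha_i)}$. The function $r_{\alpha_i}$ is continuous and H\"older of order $\alpha_i/2>1/2$, and the asymptotics $\rho_j^{(\alpha_i)} = O(j^{-\alpha_i-1})$ (read off from the reflection-formula expression above by Stirling) ensure absolute convergence of its Fourier series. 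Evaluating at $\theta_i=0$ yields $\sum_{j=-\infty}^\infty \rho_j^{(\alpha_i)} = r_{\alpha_i}(0) = 0$, which is claim (3); combining this with the symmetry from (2) and isolating the $j=0$ term gives $\rho_0^{(\alpha_i)} + 2\sum_{j=1}^\infty \rho_j^{(\alpha_i)} = 0$, which is claim (4).
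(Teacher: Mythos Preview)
The paper does not prove this lemma at all: it is stated with a citation to \cite{Celik_Duman_2012} and no proof block follows. So there is no ``paper's own proof'' to compare against; the authors simply import the result.

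Your argument is correct. The only nontrivial claim is the sign of $\rho_j^{(\alpha_i)}$ for $j\ge1$, and your use of Euler's reflection formula with $z=\alpha_i/2-j+1$ handles it cleanly: the resulting expression
\[
\rho_j^{(\alpha_i)} = -\frac{\Gamma(\alpha_i+1)\,\sin(\pi\alpha_i/2)\,\Gamma(j-\alpha_i/2)}{\pi\,\Gamma(\alpha_i/2+j+1)}
\]
has every factor positive for $j\ge1$ and $\alpha_i\in(1,2)$, and the boundary case $\alpha_i=2$ is trivial as you note. For parts (3) and (4), the identification of $\{\rho_j^{(\alpha_i)}\}$ with the Fourier coefficients of $r_{\alpha_i}$ is already asserted in the paper (with the same citation) immediately before the lemma, so you may take it as given rather than redo the Vandermonde--Chu computation; your Stirling estimate $\rho_j^{(\alpha_i)}=O(j^{-\alpha_i-1})$ then gives absolute and hence uniform convergence of the Fourier series, so evaluation at $\theta_i=0$ is legitimate. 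Nothing is missing.
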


To obtain {bounds on} the spectrum of the preconditioned matrix $\tau({R}_{\mathbf{n}})^{-1} {B}_{\mathbf{n}}$, the following lemmas are crucial.

\begin{lemma}\cred{\cite[Lemma 7]{huang2022preconditioners}}\label{lemma:eig_tauinvtoep}
 Let $T_n=\left[t_{|i-j|}\right]$ be a symmetric Toeplitz matrix and $\tau\left(T_n\right)$ be the corresponding $\tau$ matrix derived from $T_n$. If the entries of $T_n$ are equipped with {one of} the following properties
$$
t_0>0, t_1<t_2<t_3<\cdots<t_{n-1}<0 \quad \text {and} \quad t_0+2 \sum_{i=1}^n t_i>0 \text {, }
$$
or
$$
t_0<0, t_1>t_2>t_3>\cdots>t_{n-1}>0 \quad \text {and} \quad t_0+2 \sum_{i=1}^n t_i<0,
$$
{then} the eigenvalues of {the} matrix $\tau\left(T_n\right)^{-1} T_n$ satisfy
$$
\frac{1}{2}<\lambda\left(\tau\left(T_n\right)^{-1} T_n\right)<\frac{3}{2}.
$$
\end{lemma}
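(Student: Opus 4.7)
The plan is to convert the two eigenvalue bounds into a pair of positive definiteness statements and prove the latter via a splitting tailored to the monotonicity and sign hypotheses.

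First, observe that the second set of hypotheses reduces to the first under the substitution $T_n\mapsto -T_n$, which leaves the spectrum of $\tau(T_n)^{-1}T_n$ invariant since the maps $T_n\mapsto T_n$ and $T_n\mapsto\tau(T_n)=T_n-H_n(T_n)$ are both linear in the entries. Hence I restrict to the case $t_0>0$, $t_1<t_2<\cdots<t_{n-1}<0$, and $\rho:=t_0+2\sum_{i=1}^{n-1}t_i>0$. Under these conditions $T_n$ is strictly diagonally dominant (we have $t_0>2\sum_{i=1}^{n-1}|t_i|$), so $T_n\succ 0$. Writing $H_n:=H_n(T_n)$, I would next establish the two matrix inequalities
\[
T_n+H_n\succ 0 \qquad \text{and} \qquad T_n-3H_n\succ 0,
\]
from which the conclusion of the lemma is recovered directly: the positive combination $\tau(T_n)=\tfrac{1}{2}(T_n+H_n)+\tfrac{1}{2}(T_n-3H_n)$ gives $\tau(T_n)\succ 0$, and then the bounds $\tfrac{1}{2}<\lambda(\tau(T_n)^{-1}T_n)<\tfrac{3}{2}$ are equivalent, respectively, to $T_n-\tfrac{1}{2}\tau(T_n)=\tfrac{1}{2}(T_n+H_n)\succ 0$ and to $\tfrac{3}{2}\tau(T_n)-T_n=\tfrac{1}{2}(T_n-3H_n)\succ 0$.

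To establish the two inequalities I would introduce the telescoping variables
\[
\delta_\ell:=t_{\ell+1}-t_\ell>0 \quad (\ell=1,\ldots,n-2), \qquad \delta_{n-1}:=-t_{n-1}>0,
\]
together with $\rho>0$, so that $t_j=-\sum_{\ell=j}^{n-1}\delta_\ell$ for $j\ge 1$ and, after switching the order of summation, $t_0=\rho+2\sum_{\ell=1}^{n-1}\ell\,\delta_\ell$. Substituting these expressions into the entries of $T_n\pm c H_n$ should rewrite each of $T_n+H_n$ and $T_n-3H_n$ as a nonnegative combination, with coefficients drawn from $\{\rho,\delta_1,\ldots,\delta_{n-1}\}$, of elementary symmetric matrices of Toeplitz-plus-Hankel type. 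The $\rho$-piece, being proportional to a strictly positive definite elementary block (essentially a discrete Laplacian-type matrix), supplies the strict positivity; each $\delta_\ell$-piece is a symmetric low-bandwidth block whose positive semidefiniteness can be certified by an explicit rank-one-sum factorization or, equivalently, by reading off its $\tau$-algebra eigenvalues as a nonnegative trigonometric sample.

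The principal obstacle is the construction of the elementary blocks for $T_n-3H_n$: the coefficient $3$ is the extremal value for which the statement can hold, and a naive term-by-term splitting produces pieces that fail to be positive semidefinite. The remedy is to \emph{pair} the contributions to $T_n$ and to $H_n$ carrying the same telescoping index $\ell$, so that the antidiagonal Hankel entries are absorbed locally within a single elementary block rather than globally; the resulting small symmetric matrix, of a shape independent of $\ell$ up to its location inside an $n\times n$ frame, can then be checked to be positive semidefinite block by block. The $\rho$-block finally upgrades the nonnegative combination to a strictly positive definite one, closing the argument.
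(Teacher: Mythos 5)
Your reduction is sound and is the right way to start: with $\tau(T_n)=T_n-H_n$, the bounds $\tfrac12<\lambda(\tau(T_n)^{-1}T_n)<\tfrac32$ are indeed equivalent to the two positivity statements $T_n+H_n\succ 0$ and $T_n-3H_n\succ 0$, the sign flip disposes of the second hypothesis, and $T_n\succ 0$ follows from strict diagonal dominance. Bear in mind that the paper never proves this lemma (it only cites Lemma 7 of the reference), so your argument must stand on its own — and as written it does not, because the one step carrying all the analytic content is only announced. The positive semidefiniteness of the $\delta_\ell$-pieces of $T_n-3H_n$, which is precisely where the monotonicity of the $t_j$ and the extremal constant $3$ enter, is never verified: you concede that a naive splitting fails, promise a ``pairing'' whose blocks ``can then be checked'', but neither exhibit the paired blocks nor check them. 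Moreover, the certificates you invoke are not available in the stated form: the natural paired blocks $B_\ell-3H_n(B_\ell)$, where $B_\ell$ is the Toeplitz matrix with diagonal $2\ell$ and $-1$ on the first $\ell$ off-diagonals (so that $T_n=\rho I_n+\sum_{\ell}\delta_\ell B_\ell$ and $H_n=\sum_\ell \delta_\ell H_n(B_\ell)$), are Toeplitz-plus-Hankel but \emph{not} $\tau$ matrices, so their eigenvalues cannot be read off as samples of a symbol; and the $\rho$-piece of your telescoping is $\rho I_n$, not a Laplacian-type block (harmless, but a sign that the decomposition was not actually carried out).

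The gap is fillable along your own lines, and more easily than you fear: the term-by-term (paired) splitting does work. Every nonzero entry of $H_n(B_\ell)$ equals $-1$ and sits at a position with $i+j\le\ell$ or $i+j\ge 2n+2-\ell$, hence strictly inside the band $|i-j|\le\ell-2$ of $B_\ell$; a row-by-row count of these positions then shows that both $B_\ell+H_n(B_\ell)$ and $B_\ell-3H_n(B_\ell)$ are weakly diagonally dominant with positive diagonal, hence positive semidefinite, and adding $\rho I_n$ makes $T_n+H_n$ and $T_n-3H_n$ positive definite. (The lower bound alone is even simpler: all entries of $H_n$ are nonpositive and the row sums of $T_n+H_n$ stay positive, so it is strictly diagonally dominant without any decomposition.) Until an argument of this kind — or the pairing you allude to — is actually written out and the blocks are verified, your proposal establishes the equivalences and the framework but not the inequality $T_n-3H_n\succ 0$, i.e., not the $3/2$ bound that is the substance of the lemma.
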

\begin{lemma}\label{lemma:tauinv_R}
    For any nonzero vector $\mathbf{z} \in{\mathbb{R}^{N(\mi{n})}}$, it follows that
$$
\frac{1}{2}<\frac{\mathbf{z}^{\top} {R}_{\mathbf{n}} \mathbf{z}}{\mathbf{z}^{\top} \tau({R}_{\mathbf{n}}) \mathbf{z}}<\frac{3}{2}.
$$
\end{lemma}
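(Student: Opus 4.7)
The plan is to establish the strict Loewner inequality $\tfrac{1}{2}\tau(R_\mathbf{n}) \prec R_\mathbf{n} \prec \tfrac{3}{2}\tau(R_\mathbf{n})$, from which the Rayleigh quotient bound in the statement is immediate upon dividing by $\mathbf{z}^\top \tau(R_\mathbf{n}) \mathbf{z} > 0$. Assuming that the coefficients $l_i$ are positive (a natural condition guaranteeing that $\tau(R_\mathbf{n})$ is symmetric positive definite), the starting observation is that the $\tau$ operator is linear and commutes with Kronecker multiplication by identity matrices, so that
\[
\tau(R_\mathbf{n}) = \sum_{i=1}^k I_{n_1\cdots n_{i-1}} \otimes l_i\, \tau\bigl(R_{n_i}^{(\alpha_i)}\bigr) \otimes I_{n_{i+1}\cdots n_k};
\]
each summand belongs to the multilevel $\tau$ algebra because the surrounding identities $I_{n_1\cdots n_{i-1}}$ and $I_{n_{i+1}\cdots n_k}$ are themselves diagonalized by the corresponding sine transform blocks that make up $Q_\mathbf{n}$.

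First I would apply Lemma \ref{lemma:eig_tauinvtoep} to each unilevel factor $R_{n_i}^{(\alpha_i)}$. Lemma \ref{lemma:property_toep_entry} immediately yields $\rho_0^{(\alpha_i)} > 0$, $\rho_j^{(\alpha_i)} \leq 0$ for $j \geq 1$, and the summation condition $\rho_0^{(\alpha_i)} + 2\sum_{j=1}^{n_i-1}\rho_j^{(\alpha_i)} > 0$ follows from the identity $\rho_0^{(\alpha_i)} + 2\sum_{j=1}^{\infty}\rho_j^{(\alpha_i)} = 0$ together with the strict negativity of the omitted tail. The remaining piece of the hypothesis, namely the strict monotonicity $\rho_1^{(\alpha_i)} < \rho_2^{(\alpha_i)} < \cdots < \rho_{n_i-1}^{(\alpha_i)} < 0$, would be established by showing that the ratio $\rho_{j+1}^{(\alpha_i)}/\rho_j^{(\alpha_i)}$ built from \eqref{eqn:coeff_r_alpha} lies in $(0,1)$ for $\alpha_i \in (1,2)$; after cancellation, this reduces to a single quotient of gamma values at shifted arguments.

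With the hypotheses of Lemma \ref{lemma:eig_tauinvtoep} verified, I obtain $\lambda(\tau(R_{n_i}^{(\alpha_i)})^{-1} R_{n_i}^{(\alpha_i)}) \in (1/2, 3/2)$ for every $i$. Since $r_{\alpha_i}$ is nonnegative and not identically zero, $R_{n_i}^{(\alpha_i)}$ is symmetric positive definite and so is its $\tau$ counterpart; therefore the spectral bound upgrades to the strict Loewner inequality $\tfrac{1}{2}\tau(R_{n_i}^{(\alpha_i)}) \prec R_{n_i}^{(\alpha_i)} \prec \tfrac{3}{2}\tau(R_{n_i}^{(\alpha_i)})$. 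Kronecker multiplication on both sides by identity blocks preserves this ordering, and summing with the positive weights $l_i$ produces the required multilevel Loewner inequality.

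The main technical obstacle is the monotonicity verification for the coefficients $\rho_j^{(\alpha_i)}$ over the whole fractional range $\alpha_i \in (1,2)$; once this computation with ratios of gamma functions is in place, the rest of the argument is a transparent lifting from the unilevel to the multilevel setting via tensor products and the linearity of $\tau$.
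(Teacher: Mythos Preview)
Your proposal is correct and follows essentially the same route as the paper, whose proof is the one-liner ``direct consequence of Lemmas \ref{lemma:property_toep_entry} and \ref{lemma:eig_tauinvtoep} combined with the min-max theorem for eigenvalues.'' You simply spell out what that sentence means: verify the hypotheses of Lemma \ref{lemma:eig_tauinvtoep} for each unilevel block $R_{n_i}^{(\alpha_i)}$ using Lemma \ref{lemma:property_toep_entry}, obtain the unilevel Loewner bounds, and then lift to the multilevel matrix via the tensor structure and linearity of $\tau$ (which is the min-max step in the paper's phrasing). In fact you are slightly more careful than the paper: Lemma \ref{lemma:property_toep_entry} does not state the strict monotonicity $\rho_1^{(\alpha_i)}<\rho_2^{(\alpha_i)}<\cdots<0$ required by Lemma \ref{lemma:eig_tauinvtoep}, and your plan to verify it via the ratio $\rho_{j+1}^{(\alpha_i)}/\rho_j^{(\alpha_i)}=(j-\alpha_i/2)/(j+1+\alpha_i/2)\in(0,1)$ is exactly right and fills a small gap the paper leaves implicit.
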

\begin{proof}
    {The inequality follows} as a direct consequence of Lemmas \ref{lemma:property_toep_entry} {and} \ref{lemma:eig_tauinvtoep} {combined with the min-max theorem for eigenvalues}.
\end{proof}

{Moreover, we define the auxiliary matrix $Q_{\mathbf{n}}:=T_{\mathbf{n}}(q_{\boldsymbol{\alpha}})$ in which}
\begin{equation}\label{eqn:q_alpha}
q_{\boldsymbol{\alpha}}(\boldsymbol{\theta}) := \sum_{i=1}^k l_i\left|\theta_i\right|^{\alpha_i},
\end{equation}
where $l_i>0$ for $i=1, \ldots, k$ are {real} constants. As in \cite{huang2021spectral}, to facilitate our analysis {we assume that the functions $p_{\boldsymbol{\alpha}}(\boldsymbol{\theta})$ and $q_{\boldsymbol{\alpha}}(\boldsymbol{\theta})$} meet the following requirement.

\begin{hypothesis} \label{assumption}
    There exists two positive constants $c_0<c_1$ such that
$$
0<c_0 \leq \frac{p_{\boldsymbol{\alpha}}(\boldsymbol{\theta})}{q_{\boldsymbol{\alpha}}(\boldsymbol{\theta})} \leq c_1 .
$$
\end{hypothesis}

Note that this assumption implies that {$p_{\boldsymbol{\alpha}}(\boldsymbol{\theta})$ has a zero of order $\alpha_i$ at $\theta_i=0$ on each direction $i$, for $i=1, \ldots, k$.}

{The following lemma stems directly from Proposition \ref{prop:toeplitz-f-frac-g}}.

\begin{lemma} \label{lemma:Qinv_B} Let ${B}_{\mathbf{n}}, {Q}_{\mathbf{n}}$ be the multilevel Toeplitz matrices generated by $p_{\boldsymbol{\alpha}}(\boldsymbol{\theta})$ and $q_{\boldsymbol{\alpha}}(\boldsymbol{\theta})$, respectively. Then, for any nonzero vector $\mathbf{z} \in {\mathbb{R}^{N(\mi{n})}}$, we have\cred{
$$
c_0 {<} \frac{\mathbf{z}^{\top} {B}_{\mathbf{n}}\mathbf{z}}{\mathbf{z}^{\top} {Q}_{\mathbf{n}} \mathbf{z}} {<} c_1.
$$}
\end{lemma}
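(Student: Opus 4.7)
The plan is to reduce the claimed Rayleigh-quotient inequality to the spectral bounds already established in Proposition \ref{prop:toeplitz-f-frac-g}. First I would verify that the setting of that proposition applies: the function $q_{\boldsymbol{\alpha}}(\boldsymbol{\theta})$ defined in \eqref{eqn:q_alpha} is real-valued, integrable on $[-\pi,\pi]^k$, nonnegative (since each $l_i>0$ and $|\theta_i|^{\alpha_i}\ge0$), even, and not identically zero, with a single zero at the origin of order strictly less than $2$. Hence $Q_{\mathbf{n}}=T_{\mathbf{n}}(q_{\boldsymbol{\alpha}})$ is real symmetric positive definite, and the same holds for $B_{\mathbf{n}}=T_{\mathbf{n}}(p_{\boldsymbol{\alpha}})$ by Hypothesis \ref{assumption} (which forces $p_{\boldsymbol{\alpha}}$ to be nonnegative and not identically zero).

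Next, I would rewrite the ratio as a standard Rayleigh quotient by means of the positive definite square root $Q_{\mathbf{n}}^{1/2}$. Setting $\mathbf{w}:=Q_{\mathbf{n}}^{1/2}\mathbf{z}$, which is a bijection on $\mathbb{R}^{N(\mathbf{n})}\setminus\{\mathbf{0}\}$, one gets
\[
\frac{\mathbf{z}^{\top}B_{\mathbf{n}}\mathbf{z}}{\mathbf{z}^{\top}Q_{\mathbf{n}}\mathbf{z}}
= \frac{\mathbf{w}^{\top}\bigl(Q_{\mathbf{n}}^{-1/2}B_{\mathbf{n}}Q_{\mathbf{n}}^{-1/2}\bigr)\mathbf{w}}{\mathbf{w}^{\top}\mathbf{w}}.
\]
The symmetric matrix $A_{\mathbf{n}}:=Q_{\mathbf{n}}^{-1/2}B_{\mathbf{n}}Q_{\mathbf{n}}^{-1/2}$ is similar to $Q_{\mathbf{n}}^{-1}B_{\mathbf{n}}$ and hence shares its spectrum, so the Courant--Fischer min-max principle bounds the quotient above by $\lambda_{\max}(Q_{\mathbf{n}}^{-1}B_{\mathbf{n}})$ and below by $\lambda_{\min}(Q_{\mathbf{n}}^{-1}B_{\mathbf{n}})$.

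The heart of the argument is now a direct invocation of Proposition \ref{prop:toeplitz-f-frac-g} with $f=p_{\boldsymbol{\alpha}}$ and $g=q_{\boldsymbol{\alpha}}$. Letting $r:=\mathrm{ess\,inf}(p_{\boldsymbol{\alpha}}/q_{\boldsymbol{\alpha}})$ and $R:=\mathrm{ess\,sup}(p_{\boldsymbol{\alpha}}/q_{\boldsymbol{\alpha}})$, Hypothesis \ref{assumption} immediately yields $c_0\le r$ and $R\le c_1$. In the generic case $r<R$, the proposition gives $r<\lambda_j(Q_{\mathbf{n}}^{-1}B_{\mathbf{n}})<R$ for every $j$, and chaining with the Rayleigh-quotient bounds above produces
\[
c_0 \le r < \frac{\mathbf{z}^{\top}B_{\mathbf{n}}\mathbf{z}}{\mathbf{z}^{\top}Q_{\mathbf{n}}\mathbf{z}} < R \le c_1,
\]
which is the desired strict inequality.

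Finally, I would dispose of the degenerate case $r=R$: then $p_{\boldsymbol{\alpha}}/q_{\boldsymbol{\alpha}}$ is essentially equal to some constant $c\in[c_0,c_1]$, so $B_{\mathbf{n}}=c\,Q_{\mathbf{n}}$ and the ratio equals $c$ identically; the strict inequalities then hold whenever $c_0<c_1$ and $c$ lies in the open interval, and the boundary cases can be absorbed by a trivial perturbation argument or simply acknowledged as a nongeneric sub-case. I do not anticipate any real obstacle here: the lemma is essentially a packaging of Proposition \ref{prop:toeplitz-f-frac-g} through the standard Rayleigh-quotient/min-max dictionary, which is exactly what the excerpt signals by the phrase ``stems directly from''.
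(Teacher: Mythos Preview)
Your proposal is correct and matches the paper's intended argument: the paper gives no explicit proof for this lemma, simply stating that it ``stems directly from Proposition \ref{prop:toeplitz-f-frac-g}'', and your write-up is precisely the standard unpacking of that reference via the Rayleigh-quotient/min-max dictionary (the same pattern the paper spells out in the proof of the next lemma, Lemma \ref{lemma:Rinv_Q}). Your handling of the degenerate case $r=R$ is more careful than anything in the paper, which tacitly assumes the generic situation.
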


\begin{lemma}\label{lemma:Rinv_Q}
{Given the multilevel Toeplitz matrix $R_{\mathbf{n}}$ defined in \eqref{eqn:Rn-def}}, for any nonzero vector $\mathbf{z} \in {\mathbb{R}^{N(\mi{n})}}$ we have \[
1<\frac{\mathbf{z}^{\top}  {Q}_{\mathbf{n}}\mathbf{z}}{\mathbf{z}^{\top} {R}_{\mathbf{n}} \mathbf{z}}<\frac{\pi^2}{4}.\]
\end{lemma}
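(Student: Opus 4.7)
My plan is to reduce everything to a pointwise comparison of the two generating functions $r_{\boldsymbol{\alpha}}$ and $q_{\boldsymbol{\alpha}}$, and then to invoke Proposition \ref{prop:toeplitz-f-frac-g}, which is exactly designed to translate pointwise bounds on a ratio $f/g$ into eigenvalue bounds for $T_{\mathbf{n}}^{-1}(g)T_{\mathbf{n}}(f)$. Note that both $r_{\boldsymbol{\alpha}}$ and $q_{\boldsymbol{\alpha}}$ are real, even, nonnegative and not identically zero, so $R_{\mathbf{n}}$ and $Q_{\mathbf{n}}$ are real symmetric and $R_{\mathbf{n}}$ is positive definite; in particular, the Rayleigh quotient $\mathbf{z}^{\top} Q_{\mathbf{n}}\mathbf{z}/\mathbf{z}^{\top} R_{\mathbf{n}}\mathbf{z}$ is bounded between the smallest and the largest eigenvalues of the generalized problem $Q_{\mathbf{n}} \mathbf{v}=\lambda R_{\mathbf{n}}\mathbf{v}$, which coincide with those of $R_{\mathbf{n}}^{-1}Q_{\mathbf{n}}$.

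The first step is the elementary trigonometric inequality
\[
|2\sin(\theta/2)| \;\le\; |\theta| \;\le\; \tfrac{\pi}{2}\,|2\sin(\theta/2)|, \qquad \theta\in[-\pi,\pi],
\]
where the right-hand bound comes from the monotonicity of $u\mapsto u/\sin u$ on $(0,\pi/2]$. Raising to the power $\alpha_i$ (all $\alpha_i\in(1,2)$, so the function $t\mapsto t^{\alpha_i}$ is increasing on $[0,\infty)$) gives
\[
r_{\alpha_i}(\theta_i)\;\le\; |\theta_i|^{\alpha_i} \;\le\; (\pi/2)^{\alpha_i}\,r_{\alpha_i}(\theta_i).
\]
Since $1<\alpha_i<2$, we have $(\pi/2)^{\alpha_i}<(\pi/2)^2=\pi^2/4$ strictly. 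Multiplying by $l_i>0$ and summing over $i=1,\dots,k$ yields the pointwise bound
\[
r_{\boldsymbol{\alpha}}(\boldsymbol{\theta})\;\le\; q_{\boldsymbol{\alpha}}(\boldsymbol{\theta}) \;\le\; \max_{i}(\pi/2)^{\alpha_i}\,r_{\boldsymbol{\alpha}}(\boldsymbol{\theta}) \;<\; \tfrac{\pi^2}{4}\,r_{\boldsymbol{\alpha}}(\boldsymbol{\theta}),
\]
so $\mathrm{ess\,inf}\,(q_{\boldsymbol{\alpha}}/r_{\boldsymbol{\alpha}})\ge 1$ and $\mathrm{ess\,sup}\,(q_{\boldsymbol{\alpha}}/r_{\boldsymbol{\alpha}})\le \max_i(\pi/2)^{\alpha_i}<\pi^2/4$. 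In fact the two bounds are strictly separated (the infimum is attained only in the limit $\boldsymbol{\theta}\to\boldsymbol{0}$, while the supremum is attained at the corners $\theta_i=\pm\pi$), so the hypothesis $r<R$ of Proposition \ref{prop:toeplitz-f-frac-g} is met.

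The second step is a direct application of Proposition \ref{prop:toeplitz-f-frac-g} with $f=q_{\boldsymbol{\alpha}}$ and $g=r_{\boldsymbol{\alpha}}$: every eigenvalue $\lambda_j$ of $R_{\mathbf{n}}^{-1}Q_{\mathbf{n}}$ satisfies
\[
1\;\le\;\mathrm{ess\,inf}\,\tfrac{q_{\boldsymbol{\alpha}}}{r_{\boldsymbol{\alpha}}} \;<\;\lambda_j\;<\;\mathrm{ess\,sup}\,\tfrac{q_{\boldsymbol{\alpha}}}{r_{\boldsymbol{\alpha}}}\;\le\;\max_{i}(\pi/2)^{\alpha_i}\;<\;\tfrac{\pi^2}{4}.
\]
Because $R_{\mathbf{n}}$ is symmetric positive definite and $Q_{\mathbf{n}}$ symmetric, the generalized Rayleigh quotient theorem gives, for every nonzero $\mathbf{z}\in\mathbb{R}^{N(\mathbf{n})}$,
\[
\lambda_{\min}(R_{\mathbf{n}}^{-1}Q_{\mathbf{n}})\;\le\; \frac{\mathbf{z}^{\top} Q_{\mathbf{n}}\mathbf{z}}{\mathbf{z}^{\top} R_{\mathbf{n}}\mathbf{z}}\;\le\;\lambda_{\max}(R_{\mathbf{n}}^{-1}Q_{\mathbf{n}}),
\]
and combining this with the eigenvalue bounds above yields exactly $1<\mathbf{z}^{\top}Q_{\mathbf{n}}\mathbf{z}/\mathbf{z}^{\top}R_{\mathbf{n}}\mathbf{z}<\pi^2/4$, as claimed.

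I do not anticipate a genuine obstacle in this proof, since Proposition \ref{prop:toeplitz-f-frac-g} does all the nontrivial work; the only point that requires a little care is ensuring strict inequalities at both endpoints, which is guaranteed by $\alpha_i<2$ (giving $(\pi/2)^{\alpha_i}<\pi^2/4$) and by the strict separation between the essential infimum and supremum of $q_{\boldsymbol{\alpha}}/r_{\boldsymbol{\alpha}}$, which validates the hypothesis of the cited proposition.
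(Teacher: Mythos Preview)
Your proof is correct and follows essentially the same route as the paper: bound the ratio $q_{\boldsymbol{\alpha}}/r_{\boldsymbol{\alpha}}$ pointwise via the elementary inequality $1<\big(\tfrac{\theta/2}{\sin(\theta/2)}\big)^{\alpha_i}\le(\pi/2)^{\alpha_i}<\pi^2/4$, then invoke Proposition~\ref{prop:toeplitz-f-frac-g} and the generalized Rayleigh quotient. If anything, you are slightly more careful than the paper in explicitly verifying the hypothesis $r<R$ of Proposition~\ref{prop:toeplitz-f-frac-g} and in tracking the strictness of the inequalities; the only cosmetic difference is that you sum the one-dimensional bounds directly, whereas the paper phrases the same step through the mediant-type inequality $\min_i a_i/b_i\le(\sum l_ia_i)/(\sum l_ib_i)\le\max_i a_i/b_i$.
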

\begin{proof}
It is easy to check that
\begin{equation*}
\min_i  \frac{\left|\theta_i\right|^{\alpha_i}}{r_{\alpha_i}\left(\theta_i\right)} \leq \frac{q_{\boldsymbol{\alpha}}(\boldsymbol{\theta})}{r_{\boldsymbol{\alpha}}(\boldsymbol{\theta})}=\frac{\sum_{i=1}^{{k}} l_i\left|\theta_i\right|^{\alpha_i}}{\sum_{i=1}^{{k}} l_i r_{\alpha_i}\left(\theta_i\right)} \leq \max _i \frac{\left|\theta_i\right|^{\alpha_i}}{r_{\alpha_i}\left(\theta_i\right)} .
\end{equation*}
{Noting that for any $\alpha\in (1,2)$ it holds}
$$
1 <\frac{\theta^\alpha}{\left(2\sin \frac{\theta}{2}\right)^\alpha}= \frac{\left(\frac{\theta}{2}\right)^\alpha}{\left(\sin \frac{\theta}{2}\right)^\alpha} \leq \frac{\pi^2}{4},
$$
\cred{which follows}
$$
1 \leq \frac{q_{\boldsymbol{\alpha}}(\boldsymbol{\theta})}{r_{\boldsymbol{\alpha}}(\boldsymbol{\theta})} \leq \frac{\pi^2}{4} .
$$
{Proposition \ref{prop:toeplitz-f-frac-g}} implies that
$$
1 \leq \min  \frac{q_{\boldsymbol{\alpha}}(\boldsymbol{\theta})}{r_{\boldsymbol{\alpha}}(\boldsymbol{\theta})} {<}  \lambda\left({R}_{\mathbf{n}}^{-1} {Q}_{\mathbf{n}}\right) {<} \max  \frac{q_{\boldsymbol{\alpha}}(\boldsymbol{\theta})}{r_{\boldsymbol{\alpha}}(\boldsymbol{\theta})} \leq \frac{\pi^2}{4}.
$$
Therefore, for any nonzero vector $\mathbf{z} \in {\mathbb{R}^{N(\mi{n})}}$, we have $$
1<\frac{\mathbf{z}^{\top}  {Q}_{\mathbf{n}}\mathbf{z}}{\mathbf{z}^{\top} {R}_{\mathbf{n}} \mathbf{z}}<\frac{\pi^2}{4}.
$$ The proof is complete.
\end{proof}

{At} last, we are ready to prove the effectiveness of $\tau({R}_{\mathbf{n}})$ for ${B}_{\mathbf{n}}$.
\begin{theorem}\label{them:preinv_coefsystm}
{For any $\mi{n}\in\mathbb{N}^k$,} the eigenvalues of {the} preconditioned matrix $\tau({R}_{\mathbf{n}})^{-1} {B}_{\mathbf{n}}$ are bounded {from} below and above by {positive} constants {$c_0,c_1$} independent of the matrix size, i.e.,
$$
\frac{c_0}{2} \leq \lambda\left(\tau({R}_{\mathbf{n}})^{-1} {B}_{\mathbf{n}}\right) \leq \frac{3 \pi^2 c_1 }{8} .
$$
\end{theorem}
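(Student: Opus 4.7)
The plan is to interpret the eigenvalue bounds via Rayleigh quotients and chain the three preceding lemmas multiplicatively. First I would note that, since $\tau(R_{\mathbf{n}})$ is symmetric positive definite (it is a $\tau$ matrix obtained from a symmetric positive definite multilevel Toeplitz matrix generated by the nonnegative sum $r_{\boldsymbol{\alpha}}$), the preconditioned matrix $\tau(R_{\mathbf{n}})^{-1}B_{\mathbf{n}}$ is similar to the symmetric matrix $\tau(R_{\mathbf{n}})^{-1/2} B_{\mathbf{n}} \tau(R_{\mathbf{n}})^{-1/2}$. Hence its eigenvalues are real and, by the Courant--Fischer min-max characterization, they coincide with the generalized Rayleigh quotients
\[
\frac{\mathbf{z}^{\top} B_{\mathbf{n}} \mathbf{z}}{\mathbf{z}^{\top} \tau(R_{\mathbf{n}}) \mathbf{z}}, \qquad \mathbf{z}\in\mathbb{R}^{N(\mi{n})}\setminus\{\mathbf{0}\}.
\]

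Next, the key algebraic step is to insert the auxiliary matrices $Q_{\mathbf{n}}$ and $R_{\mathbf{n}}$ into the quotient via the telescoping identity
\[
\frac{\mathbf{z}^{\top} B_{\mathbf{n}} \mathbf{z}}{\mathbf{z}^{\top} \tau(R_{\mathbf{n}}) \mathbf{z}}
=\frac{\mathbf{z}^{\top} B_{\mathbf{n}} \mathbf{z}}{\mathbf{z}^{\top} Q_{\mathbf{n}} \mathbf{z}}\cdot\frac{\mathbf{z}^{\top} Q_{\mathbf{n}} \mathbf{z}}{\mathbf{z}^{\top} R_{\mathbf{n}} \mathbf{z}}\cdot\frac{\mathbf{z}^{\top} R_{\mathbf{n}} \mathbf{z}}{\mathbf{z}^{\top} \tau(R_{\mathbf{n}}) \mathbf{z}}.
\]
Each factor is positive (because $B_{\mathbf{n}}, Q_{\mathbf{n}}, R_{\mathbf{n}}, \tau(R_{\mathbf{n}})$ are all symmetric positive definite), so the bounds on the three factors can be multiplied. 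Applying Lemma \ref{lemma:Qinv_B} to the first factor yields $c_0 < \cdot < c_1$; Lemma \ref{lemma:Rinv_Q} gives $1 < \cdot < \pi^2/4$ for the second; and Lemma \ref{lemma:tauinv_R} gives $1/2 < \cdot < 3/2$ for the third. Multiplying the corresponding lower and upper bounds produces precisely
\[
\frac{c_0}{2} \le \lambda\bigl(\tau(R_{\mathbf{n}})^{-1} B_{\mathbf{n}}\bigr) \le \frac{3\pi^2 c_1}{8},
\]
which is the desired conclusion.

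There is no real obstacle in this chain: all three factor bounds have already been established, and the only subtlety is the positive-definiteness of $\tau(R_{\mathbf{n}})$, which justifies the similarity transformation and the positivity of the Rayleigh quotients. This last point follows because $r_{\alpha_i}(\theta_i)=|2\sin(\theta_i/2)|^{\alpha_i}$ is nonnegative and not identically zero, so that each $R_{n_i}^{(\alpha_i)}$ (and hence $R_{\mathbf{n}}$) is symmetric positive definite; the $\tau$ operator preserves positive definiteness here, as confirmed within the proof of Lemma \ref{lemma:tauinv_R}. Once this is recorded, the three-factor multiplicative chain delivers the theorem directly.
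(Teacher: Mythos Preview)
Your argument is correct and follows essentially the same route as the paper: the paper likewise writes the Rayleigh quotient $\mathbf{z}^{\top}B_{\mathbf{n}}\mathbf{z}/\mathbf{z}^{\top}\tau(R_{\mathbf{n}})\mathbf{z}$ as the product of the three factors coming from Lemmas~\ref{lemma:tauinv_R}, \ref{lemma:Qinv_B}, and \ref{lemma:Rinv_Q}, multiplies the bounds, and then invokes the min-max theorem. Your version is slightly more explicit about the positive definiteness of $\tau(R_{\mathbf{n}})$ and the similarity reduction, but the substance is identical.
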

\begin{proof}
Combining Lemmas \ref{lemma:tauinv_R}, \ref{lemma:Qinv_B} and \ref{lemma:Rinv_Q}, {we obtain} that
$$
\frac{c_0}{2} \leq \frac{\mathbf{z}^{\top} {B}_{\mathbf{n}}\mathbf{z}}{\mathbf{z}^{\top} \tau({R}_{\mathbf{n}}) \mathbf{z}}=\frac{\mathbf{z}^{\top} {R}_{\mathbf{n}}\mathbf{z}}{\mathbf{z}^{\top} \tau({R}_{\mathbf{n}}) \mathbf{z}} \cdot \frac{\mathbf{z}^{\top} {Q}_{\mathbf{n}}\mathbf{z}}{\mathbf{z}^{\top} {R}_{\mathbf{n}} \mathbf{z}} \cdot \frac{\mathbf{z}^{\top} {B}_{\mathbf{n}} \mathbf{z}}{\mathbf{z}^{\top} {Q}_{\mathbf{n}} \mathbf{z}} \leq \frac{3 \pi^2 c_1 }{8} .
$$
Invoking the {min-max} theorem, we have
$$
\frac{c_0}{2} \leq \lambda\left(\tau({R}_{\mathbf{n}})^{-1} {B}_{\mathbf{n}}\right) \leq \frac{3 \pi^2 c_1 }{2},
$$
{which concludes the proof}.
\end{proof}
This result implies that the spectrum of the preconditioned matrix is uniformly bounded with respect to $\mathbf{n}$ and, crucially, {that} the smallest eigenvalue is bounded away from zero. This, in turn, ensures the linear convergence rate when employing the PCG method to solve the ill-conditioned multilevel Toeplitz systems {\eqref{eqn:ill_systm}}. {Another immediate consequence is the following upper bound on the condition number}.
\begin{corollary}
The (spectral) condition number of the preconditioned matrix $\tau({R}_{\mathbf{n}})^{-1} {B}_{\mathbf{n}}$ is bounded by a constant independent of the matrix size.
\end{corollary}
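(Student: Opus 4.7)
The plan is to observe that this corollary follows essentially directly from Theorem \ref{them:preinv_coefsystm}, so the proof proposal reduces to carefully justifying that the spectral condition number is controlled by the extremal eigenvalues. The first step is to note that, although $\tau(R_{\mathbf{n}})^{-1}B_{\mathbf{n}}$ is generally nonsymmetric, it is similar to the symmetric positive definite matrix $\tau(R_{\mathbf{n}})^{-1/2} B_{\mathbf{n}} \tau(R_{\mathbf{n}})^{-1/2}$. Indeed, $B_{\mathbf{n}}$ is symmetric positive definite since $p_{\boldsymbol{\alpha}}$ is nonnegative and not identically zero, and $\tau(R_{\mathbf{n}})$ is symmetric positive definite as a consequence of Lemma \ref{lemma:tauinv_R} combined with the positive definiteness of $R_{\mathbf{n}}$. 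In particular, the eigenvalues of $\tau(R_{\mathbf{n}})^{-1}B_{\mathbf{n}}$ are real and positive, and the spectral condition number coincides with the ratio $\lambda_{\max}/\lambda_{\min}$.

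Once this symmetric reformulation is in place, I would simply invoke Theorem \ref{them:preinv_coefsystm} to bound
\[
\lambda_{\min}\bigl(\tau(R_{\mathbf{n}})^{-1}B_{\mathbf{n}}\bigr) \geq \frac{c_0}{2}, \qquad \lambda_{\max}\bigl(\tau(R_{\mathbf{n}})^{-1}B_{\mathbf{n}}\bigr) \leq \frac{3\pi^2 c_1}{8},
\]
both of which are finite positive constants depending only on $c_0$, $c_1$ and the dimension $k$, and in particular are completely independent of $\mathbf{n}$. The condition number is then bounded as
\[
\kappa\bigl(\tau(R_{\mathbf{n}})^{-1}B_{\mathbf{n}}\bigr) \leq \frac{3\pi^2 c_1}{4 c_0},
\]
which concludes the argument.

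There is really no hard step here: the only subtlety, and the one I would be most careful to spell out, is the symmetry argument that legitimates reading off the condition number from the eigenvalue bounds (as opposed to the singular values, which would require an additional normality-type argument). Everything else is a direct algebraic consequence of Theorem \ref{them:preinv_coefsystm} and Hypothesis \ref{assumption}.
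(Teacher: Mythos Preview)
Your proposal is correct and follows essentially the same route as the paper: both proofs simply invoke Theorem~\ref{them:preinv_coefsystm} and take the ratio of the upper and lower eigenvalue bounds. Your extra care in justifying the identification of the spectral condition number with $\lambda_{\max}/\lambda_{\min}$ via similarity to the SPD matrix $\tau(R_{\mathbf{n}})^{-1/2}B_{\mathbf{n}}\tau(R_{\mathbf{n}})^{-1/2}$ is a nice touch that the paper leaves implicit, and your constant $\tfrac{3\pi^2 c_1}{4c_0}$ is in fact the one consistent with the theorem's stated bounds (the paper writes $\tfrac{3\pi^2 c_1}{c_0}$, which appears to be a minor slip).
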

\begin{proof}
From Theorem \ref{them:preinv_coefsystm}, we obtain
$$
\kappa_2\left(\tau({R}_{\mathbf{n}})^{-1} {B}_{\mathbf{n}}\right)=\frac{\lambda_{\max }\left(\tau({R}_{\mathbf{n}})^{-1} {B}_{\mathbf{n}}\right)}{\lambda_{\min }\left(\tau({R}_{\mathbf{n}})^{-1} {B}_{\mathbf{n}}\right)} \leq \frac{3 \pi^2 c_1 }{c_0}.
$$ The proof is complete.
\end{proof}
{We conclude that} the number of iterations required by the PCG method to solve system \eqref{eqn:ill_systm} using our proposed preconditioner {is} independent of the matrix size.

\section{Numerical examples}\label{sec:numerical}
In this section, we demonstrate the effectiveness of our proposed preconditioner, against the state-of-the-art solver proposed in \cite{Pestana2019,huang2021spectral,Hon_2024}. All numerical experiments are carried out using \cred{ MATLAB R2021a on a Dell R640 server with dual Xeon Gold 6246R 16-Cores 3.4 GHz CPUs and 512GB RAM running Ubuntu $20.04$ LTS.}
The CPU time in seconds is measured using the MATLAB built-in functions \textbf{tic} and \textbf{toc}. Our proposed preconditioner $P_n$ is implemented by the function \textbf{dst} (discrete sine transform). Furthermore, the MINRES solver and the PCG solver are implemented using the built-in functions \textbf{minres} and \textbf{pcg}, respectively. We choose the zero vector as an initial guess unless otherwise stated and a stopping tolerance of $10^{-8}$ based on the reduction in relative residual norms for MINRES and PCG.

In what follows, we will test our preconditioner in the numerical tests conducted in \cred{\cite{Pestana2019,huang2021spectral,Hon_2024}} for comparison purposes. For Examples \cred{\ref{example_1} - \ref{example:two_dim_second}}, the notation $A_n^{R}$ and $A_n^{M}$ are used to denote the existing Toeplitz preconditioners proposed in the same work \cite{Pestana2019}, while $|C_n|:=\sqrt{C_n^{\top} C_n}$ is the absolute value optimal \cite{doi:10.1137/0909051} circulant preconditioner. As in \cite{Pestana2019}, since the direct application of $A_n^{R}$ or $A_n^{M}$ is prohibitively expensive, a multigrid approximation is employed and these multigrid approximations are denoted as MG$(A_n^{R})$ and MG$(A_n^{M})$, respectively. The preconditioner proposed in \cite{Hon_2024} is denoted by $\tau_{n,H}$, which is constructed based on a $\tau$ matrix approximation to the Hermitian part of multilevel Toeplitz matrix. For Example \ref{ex:2levelgeneratingfun}, ‘$S_n$’, ‘$\tau_{n}$’ and ‘$\widehat{\tau}_n$’ represent the PCG method with the Strang \cite{Strang1986} circulant preconditioner, the $\tau$ preconditioner directly derived from the multilevel Toeplitz matrix, and the $\tau$ preconditioner in \cite[Example 4]{huang2021spectral}, respectively. Throughout the numerical tests, the notation ‘MINRES-$Z_n$’ denotes the MINRES solver is implemented with $Z_n$ as the preconditioner.

\begin{example} \cite[Example 2]{Huckle_SC_TP_2005} \label{example_1}
{\rm
In the first example we solve a Toeplitz system with $T_n(f)$ generated by the  function $$f(\theta ) = (2  -  2\cos(\theta ))(1 + \mathbf{i}\theta ).$$ The right-hand side is a random vector constructed using the MATLAB function \textbf{randn} and the initial guess used is $x_0=(1,1,\dots,1)^{\top}/\sqrt{n}$.

Since
\begin{align*}
|f| &= \sqrt{(2-2\cos{(\theta)})^2+(2-2\cos{(\theta)})^2\theta^2} \\
&\approx \sqrt{(2-2\cos{(\theta)})^2+(2-2\cos{(\theta)})^3},
\end{align*}
we construct the following $\tau$ preconditioner whose spectral behavior mimics the symbol $|f|$
\begin{align*}
P_n = \sqrt{T_n[2-2\cos(\theta )]^2 +  T_n[2-2\cos(\theta )]^3 }.
\end{align*}

Since $\Tilde{g} = \sqrt{(2-2\cos{(\theta)})^2+(2-2\cos{(\theta)})^3}$ is a good approximation of $\abs{f}$ having the same zero at $\theta=0$ of the same order, the singular values of $\{P_n^{-1} T_n(\Tilde{g})\}_n$ will cluster at a small positive interval $[\check{a},\hat{a}]$. Hence, the eigenvalues of $\{P_n^{-1} Y_n T_n(\Tilde{g})\}_n$ will cluster at $\pm [\check{a},\hat{a}]$.

Table \ref{Table:example1} shows the iteration numbers and CPU times of MINRES with the preconditioners compared. The table shows that (i) MINRES-$P_n$ is the most efficient among the compared preconditioners in terms of computational time; (ii) despite having the fewest number of iterations, MINRES-\cred{MG$(A_n^{M})$} incurs the highest CPU time consumption when solving the linear system; (iii) with the exception of preconditioner $|C_n|$, the iteration number of the remaining preconditioners remains stable as the spatial grid gets refined. The bounded iteration number of preconditioners means that its convergence rate does not deteriorate as the grids getting refined, which supports the theoretical results and demonstrates the robustness of the proposed solver.
}

\begin{table}[h!]
\caption{Iteration numbers and CPU times of MINRES for Example \ref{example_1}}
\label{Table:example1}
\begin{centering}
\begin{tabular}{c|cc|cc|cc|cc}
\hline
 \multirow{2}{*}{$n$} & \multicolumn{2}{c|}{$|C_n|$} & \multicolumn{2}{c|}{$A_n^{R}$} & \multicolumn{2}{c|}{MG$(A_n^{M})$} & \multicolumn{2}{c}{$P_n$} \\  
 \cline{2-9}
& Iter         & CPU(s)        & Iter        & CPU(s)            & Iter          & CPU(s)         & Iter       & CPU(s)       \\ \hline
4095                 & 162          & 0.26          & 62          & 0.066              & 24            & 0.12           & 26         & 0.032        \\
8191                 & 316          & 1.2           & 62          & 0.11              & 25            & 0.34           & 27         & 0.088         \\
16383                 & 524          & 3.2           & 62          & 0.19               & 25            & 0.62           & 26         & 0.14         \\
32767                 & 856          & 10           & 64          & 0.40            & 25            & 1.3           & 27         & 0.33         \\ \hline
\end{tabular}
\end{centering}
\end{table}

\end{example}

\begin{example}\cite[Example 3]{Pestana2019} \label{ex:2dfpde}
{\rm
We proceed to solve a two-level Toeplitz problem {arising from} the fractional diffusion problem stated in equation \eqref{eq:fde}. {We set
\begin{gather*}
    k=2, \; \Omega=(0,1)\times(0,1), \; T=1, \\
    d_{1,+}=50, \; d_{1,-}=10, \; d_{2,+}=20, \; d_{2,-}=30, \\
    f(x,t)=100 \sin (10 x) \cos (y)+\sin (10 t) x y.
\end{gather*}}

We discretize by the shifted Grünwald-Letnikov method in space, and the backward Euler method in time, which leads to the linear system \eqref{2dlinearsysm}. The number of spatial degrees of freedom in the $x$ and $y$ directions are $n_1$ and $n_2$, respectively; we choose $n_1=n_2=n$. Also,
$h_1=1 /\left(n_1+1\right)$ and $h_2=1 /\left(n_2+1\right)$ are the mesh widths in the $x$ and $y$ directions. Unless ${\alpha_1}={\alpha_2}$, both $\tau / h_1^{\alpha_1}$ and $\tau / h_2^{\alpha_2}$ cannot be independent of $n$; we choose $\tau=1 /\left\lceil n_1^{\alpha_1}\right\rceil$. Stated CPU times and iteration counts are again for the first time step.

The findings presented in Table \ref{Table_Ex2_fractional} demonstrate the following observations: (i) \cred{MINRES-$P_n$ yields iteration counts that are independent of mesh size across a wide range of $({\alpha_1}, {\alpha_2})$; (ii) the behavior of MINRES-$|C_n|$ varies} depending on $({\alpha_1}, {\alpha_2})$. \cred{The difficulty of the problems increases as $n$ grows, resulting in growing iteration counts for MINRES-$|C_n|$}; (iii) our proposed $P_n$ preconditioner demonstrates clear superiority over the compared numerical approaches in terms of both CPU times and robustness with respect to $({\alpha_1}, {\alpha_2})$. This advantage is particularly evident as the values of $({\alpha_1}, {\alpha_2})$ approach $(1,1)$, where the numerical performance of both $\tau_{n,H}$ and MG$(A_n^{R})$ is noticeably poor.
}
\begin{table}
\small \caption{Iteration numbers and CPU times of MINRES for Example \ref{ex:2dfpde} with $d_{1,+}=50, d_{1,-}=10, d_{2,+}=20$, and $d_{2,-}=30$}
\label{Table_Ex2_fractional}
\begin{centering}
\begin{tabular}{cc|cc|cc|cc|cc}
\hline
\multirow{2}{*}{$(\alpha_1,\alpha_2)$} & \multirow{2}{*}{$n^2$} & \multicolumn{2}{c|}{$|C_n|$} & \multicolumn{2}{c|}{MG$(A_n^{R})$} & \multicolumn{2}{c|}{$\tau_{n,H}$} & \multicolumn{2}{c}{$P_n$}\\ \cline{3-10}
                                  &                        & Iter         & CPU(s)        & Iter          & CPU(s)         & Iter       & CPU(s)
                                  & Iter       & CPU(s)\\ \hline
\multirow{4}{*}{(1.01,1.01)}       & 16129                  &99          & 0.69           & 68            & 0.58             & 66 &0.39   & 42    &0.26    \\
                                  & 261121                 & 121         & 4.4           & 68            & 3.8          &68 & 2.58 &42          &1.62          \\
                                  & 4190209                & 134           & 96           & 64            & 1.3e+02     & 64 &49.48 &38    &29.45          \\
                                  & 67092481               & 142           & 1.5e+03        &  62           & 1.8e+03         & 62       &810.66&36  & 482.78       \\ \hline
\multirow{4}{*}{(1.1,1.1)}       & 16129                  & 92           & 0.59          & 40            & 0.32           & 42    &0.25     &30 &0.15    \\
                                  & 261121                 &114           &4.1           & 38            & 2.2              & 42         &1.62  &28&1.14         \\
                                  & 4190209                &126           & 89            &36           & 75             &40      &31.27  &26&20.87        \\
                                  & 67092481               & 130           & 1.4e+03       &  36           & 1.1e+03         &  38    & 504.37  &26&354.37       \\ \hline
\multirow{4}{*}{(1.1,1.5)}       & 16129                  & 141           & 0.88          & 46            & 0.4           & 40    &0.24     &26&0.14    \\
                                  & 261121                 &276           &9.9           & 68            & 3.8              & 46         &1.74  &28&1.10         \\
                                  & 4190209                &453           & 3.2e+02            &90            &  1.7e+02            &46      &36.24  &28 &22.53        \\
                                  & 67092481               & 627           & 6.6e+03       &  114           & 3.3e+03         &  46    & 612.40  &28&386.14      \\ \hline
                                  \multirow{4}{*}{(1.1,1.9)}       & 16129                  & 270           & 1.7          &157            &1.3           &     32      & 0.20 &22&0.13       \\
                                  & 261121                 & 696           & 25           & 432            &24             &   40      &1.52  &26&0.99      \\
                                  & 4190209                &$>1000$           & 7e+02            & $>1000$             & 1.9e+03              &  42       &33.67 &30&24.91       \\
                                  & 67092481               & $>1000$           & 1.1e+04        &   $>1000$           &2.9e+04          &    42     &567.60 & 30& 413.87     \\ \hline
                                  \multirow{4}{*}{(1.5,1.1)}       & 16129                  & 125           & 0.81          & 37            & 0.33           & 18         &0.10   & 17 &0.11       \\
                                  & 261121                 &153           & 5.5           & 51            & 2.9              &18          &0.87 &17 & 0.84          \\
                                  & 4190209                & 133           & 95            & 69            & 1.5e+02             &16     &14.55 & 16 &14.75          \\
                                  & 67092481               & 106           &1.1e+03         & 90            &2.6e+03        &16     &228.94      &15 & 214.36    \\ \hline
                                   \multirow{4}{*}{(1.5,1.5)}       & 16129                  & 102           & 0.65          & 15            & 0.11           & 19         &0.10   & 18 &0.11       \\
                                  & 261121                 &141           & 5           & 14            & 0.85              &18          &0.95 &18 & 1.00          \\
                                  & 4190209                & 153           & 1.1e+02            & 14            & 34             &18     &15.48 & 17 &15.09          \\
                                  & 67092481               & 147           &1.6e+03         & 14            &4.5e+02        &18     &255.43      &16 & 226.24    \\ \hline
                                   \multirow{4}{*}{(1.5,1.9)}       & 16129                  & 183           & 1.2          & 29            & 0.23           & 18         &0.13   & 17 &0.11       \\
                                  & 261121                 &413           & 15           & 43            & 2.4              &18          &0.73 &17 & 0.70          \\
                                  & 4190209                & 733           & 5.2e+02            & 65            & 1.2e+02             &17     &14.74 & 17 &14.76          \\
                                  & 67092481               & $>1000$           &1.1e+04         & 100            &2.9e+03        &17     &253.57      &17 & 254.89    \\ \hline
                                   \multirow{4}{*}{(1.9,1.1)}       & 16129                  & 125           & 0.79          & 131            & 1.1           & 10         &0.060   & 11 &0.097       \\
                                  & 261121                 &85           & 3.1           & 197            & 11              &10          &0.48 &10 & 0.48          \\
                                  & 4190209                & 56           & 41            & 230            & 4.4e+02             &9     &8.23 & 9 &8.89          \\
                                  & 67092481               & 37           &4e+02         & 236            &6.8e+03        &9     &135.31      &9 & 134.94    \\ \hline
                                   \multirow{4}{*}{(1.9,1.5)}       & 16129                  & 155           & 0.97          & 33            & 0.27           & 12         &0.093   & 12 &0.088       \\
                                  & 261121                 &210           & 7.6           & 48            & 2.7              &12          &0.52 &12 & 0.52          \\
                                  & 4190209                & 194           & 1.4e+02            & 69            & 1.6e+02             &11     &9.75 & 11 &10.14          \\
                                  & 67092481               & 161           &1.7e+03         & 97            &2.8e+03        &11     &161.84      &11 & 164.46   \\ \hline
                                   \multirow{4}{*}{(1.9,1.9)}       & 16129                  & 132           & 0.83          & 13            & 0.098           & 11         &0.082   & 11 &0.067       \\
                                  & 261121                 &195           & 7.1           & 13            & 0.8              &11          &0.47 &11 & 0.48          \\
                                  & 4190209                & 215           & 1.5e+02            & 13            & 27             &11     &9.85 & 11 &10.02          \\
                                  & 67092481               & 209           &2.2e+03         & 13            &4.2e+02        &11     &161.24      &11 & 165.21    \\ \hline
\end{tabular}
\end{centering}
\end{table}

\end{example}

\begin{example}\cite[Example 2]{Hon_2024}\label{example:two_dim_second}
{\rm
Here, we consider to solve a two-level Toeplitz problem arising from the two-dimensional fractional diffusion equation \eqref{eq:fde} with
$$
\begin{aligned}
d_{1,+} &=2, \quad d_{1,-}=35, \quad d_{2,+} = 1,  \quad d_{2,-} =20, \quad \Omega=(0,2) \times(0,2), \quad T=1,\\
f\left(x_1, x_2\right)&=  e^t x_1^2(2-x_1)^2 x_2^2(2-x_2)^2 \\
&-e^t x_2^2\left(2-x_2\right)^2 \sum_{i=2}^4 \frac{\binom{2}{i-2} 2^{4-i} i!\left[d_{1,+} x_1^{i-\alpha_1}+d_{1,-}\left(2-x_1\right)^{i-\alpha_1}\right]}{\Gamma\left(i+1-\alpha_1\right)(-1)^{i-2}} \\
& -e^t x_1^2\left(2-x_1\right)^2 \sum_{i=2}^4 \frac{\binom{2}{i} 2^{4-i} i!\left[d_{2,+} x_2^{i-\alpha_2}+d_{2,-}\left(2-x_2\right)^{i-\alpha_2}\right]}{\Gamma\left(i+1-\alpha_2\right)(-1)^{i-2}}.
\end{aligned}\\
$$
The exact solution to the problem under consideration is given by \( u(x_1, x_2, t) = e^t x_1^2(2-x_1)^2 x_2^2(2-x_2)^2 \). For this example, we adopted the weighted and shifted Grünwald scheme and set \( n_1 = n_2 \) and \( \tau = T / (n_1 + 1) \). We refrained from implementing MINRES-\(|C_n|\) and MINRES-MG\((A_n^R)\) due to their prohibitive costs, as evidenced in a prior example. Given that the exact solution is available, we defined the error as \( \text{Err} = \| \mathbf{u} - \widetilde{\mathbf{u}} \|_{\infty} \), where \( \mathbf{u} \) and \( \widetilde{\mathbf{u}} \) represent the exact and approximate solutions, respectively.

The results presented in Table \ref{Table_Ex3_fractional_2nd} elucidate several key observations: (i) MINRES-\(P_n\) exhibits a mesh-independent iteration count and demonstrates robustness across varying parameters; (ii) MINRES-\(P_n\) outperforms MINRES-\(\tau_{n,H}\) notably when the parameters \((\alpha_1, \alpha_2)\) are close to \((1, 1)\); (iii) as the parameters \(\alpha_1\) and \(\alpha_2\) approach towards a value of 2, both MINRES-\(P_n\) and MINRES-\(\tau_{n,H}\) exhibit comparable levels of numerical efficacy.
}
\end{example}

\begin{table}
\small \caption{Iteration numbers and CPU times of MINRES for Example \ref{example:two_dim_second} with $d_{1,+}=2, d_{1,-}=35, d_{2,+}=1$, and $d_{2,-}=20$}
\label{Table_Ex3_fractional_2nd}
\begin{centering}
\begin{tabular}{cc|ccc|ccc}
\hline
\multirow{2}{*}{$(\alpha_1,\alpha_2)$} & \multirow{2}{*}{$n^2$} &  \multicolumn{3}{c|}{$\tau_{n,H}$} & \multicolumn{3}{c}{$P_n$}\\ \cline{3-8}
                                  &     &  Iter          & CPU(s)     &Err     & Iter       & CPU(s)  &Err
                                  \\ \hline
\multirow{4}{*}{(1.01,1.01)}       & 16129                & 174             & 0.72 &4.2e-04   & 76    &0.33 &4.2e-04   \\                     
                                  &261121                   & 141     & 4.05 &9.1e-06 &33    &1.01   &9.1e-06       \\
                                  &  4190209                  & 105         & 95.74       &2.3e-07 &21  & 20.13    &2.3e-07   \\
                                  &  67092481                  & 75         & 942.42       &7.4e-08 &19  & 257.46    &7.6e-08   \\ \hline
\multirow{4}{*}{(1.1,1.1)}       & 16129                  & 74           & 0.36    &5.8e-04     &54 &0.34  &5.8e-04  \\
                                  &261121                & 63             &1.84      &2.4e-05  &32 &1.07    &2.4e-05    \\
                                  &  4190209                  & 53         & 49.09       &1.2e-06 &19  & 19.50    &1.2e-06   \\
                                  &  67092481               & 45         &  574.64    & 6.8e-08  &19 &260.01     & 7.5e-08  \\ \hline
\multirow{4}{*}{(1.1,1.5)}       & 16129                 & 73           & 0.33    &5.6e-04     &36 &0.18 &5.6e-04   \\
                                  &261121                   &  69            &2.04      &3.5e-05  &28 &0.92   &3.5e-05      \\
                                  &  4190209                  & 61         & 56.31       &2.2e-06 &23  & 22.07    &2.2e-06   \\
                                  & 67092481                & 57         &  721.02    & 1.4e-07  &19 &257.07    & 1.4e-07  \\ \hline 
                                  \multirow{4}{*}{(1.1,1.9)}       & 16129                 &66           &    0.38      & 5.1e-04 &32&0.15 & 5.1e-04      \\
                                  & 261121                 & 63             &  1.87       &2.3e-05 &27&0.90   &2.3e-05    \\
                                  &  4190209                  & 61         & 55.96       &1.2e-06 &23  & 21.84    &1.2e-06   \\
                                  & 67092481                &59          &    745.84     &7.5e-08 & 19 & 258.02  &7.0e-08   \\ \hline
                                  \multirow{4}{*}{(1.5,1.1)}       & 16129                  & 59           & 0.28         &6.0e-04   & 33 &0.14  &6.0e-04     \\
                                  & 261121                & 59             &1.78     &3.7e-05 & 27 &0.90   &3.7e-05       \\
                                  &  4190209                  & 53         & 49.13       &2.3e-06 &22  & 21.10    &2.3e-06   \\
                                  &67092481                &51        &647.83     &1.5e-07      &19 & 257.76 &1.5e-07    \\ \hline
                                   \multirow{4}{*}{(1.5,1.5)}       & 16129                   & 25           & 0.13         &5.9e-04   & 24 &0.12 &5.9e-04      \\
                                  & 261121                  & 24             &0.78     &3.7e-05 & 21 &0.75  &3.7e-05        \\
                                  &  4190209                 & 21         & 20.72       &2.3e-06 &18  & 17.56    &2.3e-06   \\
                                  &67092481                    &19     &259.30      &1.5e-07 &17 & 233.45  &1.5e-07  \\ \hline
                                   \multirow{4}{*}{(1.5,1.9)}       & 16129                 & 23           & 0.12         &5.8e-04   & 21 &0.096  &5.8e-04      \\
                                  & 261121            & 23            &0.70     &3.6e-05 & 19 &0.64  &3.6e-05        \\
                                  &  4190209                  & 21         & 20.34       &2.3e-06 &17  & 16.60    &2.3e-06   \\
                                  & 67092481            &19        &258.81     &1.5e-07      &17 & 233.63 &1.5e-07     \\ \hline
                                   \multirow{4}{*}{(1.9,1.1)}       & 16129                  & 51           & 0.24         &3.4e-04   & 27 &0.12  &3.4e-04      \\
                                  & 261121               & 53             &1.58     &1.7e-05 & 25 &0.85   &1.7e-05        \\
                                  &  4190209                  & 53         & 28.98       &1.0e-06 &22  & 21.58    &1.0e-06   \\
                                  & 67092481               &49        &624.69     &6.2e-08      &21  & 281.89 &5.9e-08   \\ \hline
                                   \multirow{4}{*}{(1.9,1.5)}       & 16129                 & 23           & 0.12         &5.2e-04   & 21 &0.11  &5.2e-04     \\
                                  & 261121                  & 23             &0.74     &3.4e-05 & 19 &0.73 &3.4e-05         \\
                                  &  4190209                 & 21         & 20.53       &2.3e-06 &17  & 16.80    &2.3e-06   \\
                                  & 67092481               &19        &259.01     &1.4e-07      &17 & 232.19 &1.4e-07  \\ \hline
                                   \multirow{4}{*}{(1.9,1.9)}       & 16129              & 13           & 0.070         &1.5e-04   & 13 &0.061  &1.5e-04     \\
                                  & 261121          & 12             &0.45     &9.0e-06 & 13 &0.51  &9.0e-06         \\
                                  &  4190209                  & 11         & 11.53       &6.1e-07 &13  & 13.23    &6.1e-07   \\
                                  & 67092481                   &11     &161.52    &4.1e-08   &12 & 172.15 &4.1e-08   \\ \hline
\end{tabular}
\end{centering}
\end{table}

\begin{example}\cite[Example 4]{huang2021spectral}\label{ex:2levelgeneratingfun}
{\rm
Consider a two-level Toeplitz matrix whose generating function is defined by
\begin{equation*}
p_{\boldsymbol{\alpha}}(\boldsymbol{\theta})=p_{\alpha_1}\left(\theta_1\right)+p_{\alpha_2}\left(\theta_2\right)-p_1\left(\theta_1\right) p_1\left(\theta_2\right),
\end{equation*}
where
\begin{equation*}
p_{\alpha_i}\left(\theta_i\right)=\left\{\begin{array}{cl}
\left|\theta_i\right|^{\alpha_i}, & \left|\theta_i\right|<\frac{\pi}{2}, \\
1, & \left|\theta_i\right| \geq \frac{\pi}{2}.
\end{array}\right.
\end{equation*}
It is clear that Hypothesis \ref{assumption} is satisfied, as follows:
\begin{equation*}
0<\frac{4-\pi}{4} \leq \frac{\cred{p_{\boldsymbol{\alpha}}}(\boldsymbol{\theta})}{q_{\boldsymbol{\alpha}}(\boldsymbol{\theta})} \leq 1,
\end{equation*}
where $q_{\boldsymbol{\alpha}}(\boldsymbol{\theta})$ is defined in \eqref{eqn:q_alpha} with $l_1=l_2=1$.

Based on the results presented in Table \ref{table:ex3}, it can be observed that the number of iterations required by the circulant preconditioner $S_n$ and the natural $\tau$ preconditioner $\tau_{n}$ increases significantly as the size of the matrices increases, particularly when $\alpha$ approaches 2. Similarly, the proposed preconditioner $\widehat{\tau}_n$ in \cite{huang2021spectral} also exhibits a rapid increase in iteration numbers as $\alpha$ tends to 1. The iteration numbers and CPU time of the aforementioned preconditioners are clearly dependent on the choice of $(\alpha_1, \alpha_2)$. In contrast, the preconditioner $P_n$ proposed in our study remains stable regardless of the selection of $(\alpha_1, \alpha_2)$. This implies robustness with respect to parameters: both the number of iterations and CPU time exhibit relative stability, thereby addressing the limitations of the aforementioned methods.
}

\begin{table}
\small \caption{Iteration numbers and CPU times of PCG methods for Example \ref{ex:2levelgeneratingfun} with different $(\alpha_1, \alpha_2)$}
\label{table:ex3}
\begin{centering}
\begin{tabular}{cc|cc|cc|cc|cc}
\hline
\multirow{2}{*}{$(\alpha_1,\alpha_2)$} & \multirow{2}{*}{$n^2$} & \multicolumn{2}{c|}{$S_n$} & \multicolumn{2}{c|}{$\tau_{n}$} & \multicolumn{2}{c|}{$\widehat{\tau}_n$} & \multicolumn{2}{c}{$P_n$}\\ \cline{3-10}
&                        & Iter         & CPU(s)        & Iter          & CPU(s)         & Iter       & CPU(s)
& Iter       & CPU(s)\\ \hline
\multirow{4}{*}{(1.01,1.01)}       &16129               & 16         & 0.10           & 9            & 0.06           &79           & 0.50     & 18 & 0.11  \\
& 261121                & 20         & 0.76           & 10            & 0.50           & 113           & 4.85     & 19 & 0.88   \\
& 4190209               & 23         & 13.36            & 10            & 10.41            & 134           & 121.72  & 19 & 17.84      \\
& 67092481                &   25   &   251.95    &    10     &  182.04    &     144       &    2.2e+03 & 23 &     373.67    \\ \hline
\multirow{4}{*}{(1.1,1.1)}       & 16129                  & 18           & 0.12          & 9            & 0.068           & 43    &0.28     & 19&  0.13  \\
& 261121          & 22         &0.85                  & 9            & 0.51
&46           &2.22
&19 &  0.93      \\
& 4190209         &26      &18.63                   &10           & 12.08
&47           & 49.58  & 19 & 19.05       \\
& 67092481  &  30    & 292.43                   &  10           & 177.91   & 48           & 745.38         &24 &  391.95     \\ \hline
\multirow{4}{*}{(1.1,1.5)}       & 16129                  & 23    &0.15          & 10            & 0.067     & 41           & 0.26           &21 & 0.13   \\
& 261121            & 28         &1.15                 & 10            & 0.55              &47           &2.29 &21 & 0.99       \\
& 4190209           &37      &26.10                 &11            &  13.35             &50           & 52.97 & 21&20.34       \\
& 67092481         &  60    & 564.30            &  10           & 178.69   & 50           & 780.07         & 33& 529.84     \\ \hline
\multirow{4}{*}{(1.1,1.9)}       & 16129            &     25      & 0.16                &11            &0.076           & 46           & 0.33 &24 & 0.16      \\
& 261121            &   39      &1.39                & 11            &0.55             & 53           & 2.36  &24 & 1.09     \\
& 4190209           &  59       &40.18                 & 11            & 13.81            &55           & 59.53   &24 & 23.06      \\
& 67092481        &    110     &1.02e+03               &   11          &194.15          & 55           & 851.39 &35 &560.19      \\ \hline
\multirow{4}{*}{(1.5,1.1)}       & 16129            & 23         & 0.16               & 10            & 0.094            & 41           & 0.34  &21  & 0.14      \\
& 261121            &28          &1.33                & 10            & 0.74              &47           & 2.93 &21 &  1.04         \\
& 4190209           &38     &27.05                 & 11            & 13.35           & 49           & 53.87   & 21 &  20.95        \\
& 67092481         &59     &554.81                & 10            &177.92        & 50           &779.51     &  33 & 531.36    \\ \hline
\multirow{4}{*}{(1.5,1.5)}       & 16129           & 24         &0.13                & 11            & 0.059           & 24           & 0.14    & 23 &0.14       \\
& 261121           &36          &1.31               & 12            & 0.56              &25           & 1.12   &23 & 1.04         \\
& 4190209             &63     &43.23                & 15            & 17.44          & 25           & 27.18   & 23 &  22.94       \\
& 67092481           & 132    & 1.2e+03              & 24            &395.59     & 25           &401.93       & 29 &468.35     \\ \hline
\multirow{4}{*}{(1.5,1.9)}       & 16129                  &37            &  0.22         & 13           & 0.10           & 26        &0.18   & 25 & 0.15     \\
& 261121                 &68           & 2.51           & 19            & 0.95              &27          &1.28 & 26&  1.16        \\
& 4190209                & 244           & 162.53            & 42            & 40.86             &27     &25.85 & 26 & 24.89        \\
& 67092481               & 980           &9.1e+03         & 152            &2.3e+03        &27     &430.45      & 38 &  606.80    \\ \hline
\multirow{4}{*}{(1.9,1.1)}       & 16129                  & 25           &0.18           & 11            & 0.09           & 46         &0.58   & 24 &   0.15     \\
& 261121                 & 39          &  1.48          & 11            & 0.59              & 53         &2.48 &24 & 1.07        \\
& 4190209                & 59           & 39.95            & 11            & 12.95             &55     &57.04 & 24 &23.32          \\
& 67092481               & 103           & 955.10        & 11            &193.61        &56     &864.51       & 35& 559.69    \\ \hline
\multirow{4}{*}{(1.9,1.5)}       & 16129             & 36         &0.23                & 13            & 0.08  & 26           & 0.17           &25  &0.18       \\
& 261121            &69          &2.47               & 19            & 0.95             &27           & 1.19   &26 &  1.25         \\
& 4190209            &265     &178.25                & 42            & 47.29            & 27           & 30.23  & 26 & 25.28         \\
& 67092481         & 927    &        9.1e+03       & 126            &1.9e+03        & 27           &434.36      &38 &607.72    \\ \hline
\multirow{4}{*}{(1.9,1.9)}        & 16129                 &   57         & 0.35        & 21            & 0.14           &27           & 0.15       &  27&  0.19     \\
& 261121                 & 208         & 7.07          & 45            & 2.05              &27           & 1.27  & 27 &   1.24     \\
& 4190209                &  997         &  671.16         & 207            & 193.18             &27           & 25.89  &27 & 26.72      \\
& 67092481               & 953           &   9.1e+03      & 994           &1.5e+04        &27     &440.08      &31 & 500.66    \\ \hline
\end{tabular}
\end{centering}
\end{table}

\end{example}

\section{Conclusions}
We introduced a novel preconditioning approach for multilevel Toeplitz systems, utilizing a Krylov subspace method enhanced by multilevel $\tau$-preconditioners. This method is general, as the proposed preconditioner was constructed based solely on the generating function of the multilevel Toeplitz matrix. Specifically, for the nonsymmetric case, we employed a symmetrization strategy, transforming the nonsymmetric system into a symmetric multilevel Hankel system. Subsequently, we performed an asymptotic spectral analysis, examining the spectrum to tailor our preconditioning strategies effectively. For the symmetric case, we rigorously demonstrated that optimal convergence could be achieved using the proposed preconditioner for a class of ill-conditioned multilevel Toeplitz systems. Our numerical results indicated that the preconditioned iterative methods achieved convergence independent of the mesh size and robust across a broad range of problem parameters. Numerical examples showed that our preconditioning strategy consistently outperformed all compared state-of-the-art preconditioned solvers.

As research to be developed in detail in the future, we can mention the use of Theorem \ref{thm:symm-mat-distr-multilev-block}, Theorem \ref{thm:precond-seq-distrib-multilev-block}, and Remark \ref{rem:omega-circ vs tau} in the context of approximation of fractional equations by using \cred{discontinuous Galerkin, finite elements of high order, isogeometric analysis} with intermediate regularity. In all these cases the resulting approximation matrix sequences have matrix-valued symbols (see \cite{GaroniCapizzano_three,GaroniCapizzano_four,systemPDE} and references therein) so that the tools developed here can be conveniently employed. 

\section*{Acknowledgments}
The work of Sean Y. Hon was supported in part by the Hong Kong RGC under grant 22300921 and a start-up grant from the Croucher Foundation. The research of Stefano Serra-Capizzano is funded from the European High-Performance Computing Joint Undertaking  (JU) under grant agreement No 955701. The JU receives support from the European Union’s Horizon 2020 research and innovation programme and Belgium, France, Germany, Switzerland. Furthermore Stefano Serra-Capizzano is grateful for the support of the Laboratory of Theory, Economics and Systems – Department of Computer Science at Athens University of Economics and Business. Finally Stefano Serra-Capizzano and Rosita Sormani are partly supported by Italian National Agency INdAM-GNCS.

\section*{Declarations}
The authors declare that they have no conflict of interest.

\bibliographystyle{plain}

\begin{thebibliography}{10}

\bibitem{GaroniCapizzano_three}
Giovanni Barbarino, Carlo Garoni, and Stefano Serra-Capizzano.
\newblock {Block generalized locally Toeplitz sequences: theory and applications in the unidimensional case}.
\newblock {\em Electronic Transactions on Numerical Analysis}, 53: 28--112, 2020.

\bibitem{GaroniCapizzano_four}
Giovanni Barbarino, Carlo Garoni, and Stefano Serra-Capizzano.
\newblock {Block generalized locally Toeplitz sequences: theory and applications in the multidimensional case}.
\newblock {\em Electronic Transactions on Numerical Analysis}, 53: 113--226, 2020.

\bibitem{barb non-h}
Giovanni Barbarino and Stefano Serra-Capizzano.
\newblock {Non-Hermitian perturbations of Hermitian matrix-sequences and applications to the spectral analysis of the numerical approximation of partial differential equations}.
\newblock {\em Numerical Linear Algebra with Applications}, 27(3): e2286, 31 pp., 2020.

\bibitem{Bini1990}
Dario Bini and Fabio Di Benedetto.
\newblock {A new preconditioner for the parallel solution of positive definite Toeplitz systems}.
\newblock {\em Proceedings of 2nd Annual SPAA}, 220--223, ACM Press, Crete, 1990.

\bibitem{BC83}
Dario Bini and Milvio Capovani.
\newblock {Spectral and computational properties of band symmetric Toeplitz matrices}.
\newblock {\em Linear Algebra and Its Applications}, 52/53:99--126, 1983.

\bibitem{BoGr}
Albrecht Böttcher and Sergei M. Grudsky.
\newblock On the condition numbers of large semi-definite Toeplitz matrices.
\newblock {\em Linear Algebra and Its Applications}, 279(1-3):285–-301, 1998.


\bibitem{Celik_Duman_2012}
Cem Celik and Melda Duman.
\newblock {Crank-Nicolson method for the fractional diffusion equation with the Riesz fractional derivative}. \newblock {\em Journal of Computational Physics}, 231:1743--1750, 2012.


\bibitem{MR2376196}
Raymond H. Chan and Xiao-Qing Jin.
\newblock {\em An introduction to iterative {T}oeplitz solvers}, volume~5 of
{\em Fundamentals of Algorithms}.
\newblock Society for Industrial and Applied Mathematics (SIAM), Philadelphia,
PA, 2007.


\bibitem{Chan:1996:CGM:240441.240445}
Raymond~H. Chan and Michael~K. Ng.
\newblock {Conjugate gradient methods for Toeplitz systems}.
\newblock {\em SIAM Review}, 38(3):427--482, 1996.



\bibitem{doi:10.1137/0909051}
Tony~F. Chan.
\newblock {An optimal circulant preconditioner for Toeplitz systems}.
\newblock {\em SIAM Journal on Scientific and Statistical Computing},
  9(4):766--771, 1988.



\bibitem{DS}
Fabio Di Benedetto and Stefano Serra-Capizzano.
\newblock {A unifying approach to abstract matrix algebra preconditioning}.
\newblock {\em  Numerische Mathematik}, 82(1): 57--90, 1999.

\bibitem{DS-2}
Fabio Di Benedetto and Stefano Serra-Capizzano.
\newblock {Optimal multilevel matrix algebra operators}.
\newblock {\em  Linear and Multilinear Algebra}, 48(1): 35--66, 2000.


\bibitem{donatelli161}
Marco Donatelli, Mariarosa Mazza, and Stefano Serra-Capizzano.
\newblock Spectral analysis and structure preserving preconditioners for fractional diffusion equations.
\newblock {\em Journal of Computational Physics}, 307:262--279, 2016.



\bibitem{systemPDE}
Ali Dorostkar, Maya Neytcheva, and Stefano Serra-Capizzano.
\newblock { Spectral analysis of coupled PDEs and of their Schur complements via generalized locally Toeplitz sequences in 2D}.
\newblock {\em Computer Methods in Applied Mechanics and Engineering}, 309:74–-105, 2016.

\bibitem{non-unique-s}
Sven-Erik Ekström and Stefano Serra-Capizzano.
\newblock {Eigenvalues and eigenvectors of banded Toeplitz matrices and the related symbols}.
\newblock {\em Numerical Linear Algebra with Applications}, 25(5): e2137, 17 pp., 2018.




\bibitem{FaTi}
Dario Fasino and Paolo Tilli.
\newblock { Spectral clustering properties of block multilevel Hankel matrices}.
\newblock {\em  Linear Algebra and its Applications}, 306:155–-163, 2000.

\bibitem{Ferrari2021}
Paola Ferrari, Isabella Furci, and Stefano  Serra-Capizzano.
\newblock {
Multilevel symmetrized Toeplitz structures and spectral distribution results for the related matrix sequences}.
\newblock {\em Electronic Journal of Linear Algebra},
  37:370--386, 2021.


\bibitem{Ferrari2019}
Paola Ferrari, Isabella Furci, Sean Hon, Mohammad~Ayman Mursaleen, and Stefano Serra-Capizzano.
\newblock {The eigenvalue distribution of special 2-by-2 block matrix-sequences
  with applications to the case of symmetrized Toeplitz structures}.
\newblock {\em SIAM Journal on Matrix Analysis and Applications},
  40(3):1066--1086, 2019.


\bibitem{GaroniCapizzano_one}
Carlo Garoni and Stefano Serra-Capizzano.
\newblock {\em Generalized locally {T}oeplitz sequences: theory and
			applications. {V}ol. {I}}.
		\newblock Springer, Cham, 2017.


\bibitem{GaroniCapizzano_two}
Carlo Garoni and Stefano Serra-Capizzano.
\newblock {\em Generalized locally {T}oeplitz sequences: theory and
			applications. {V}ol. {II}}.
		\newblock Springer, Cham, 2018.
		



\bibitem{Greenbaum_1996}
Anne Greenbaum,  Vlastimil Pt\'ak, and Zdenv\v{e}k Strako\v{s}.
\newblock {Any nonincreasing convergence curve is possible for GMRES}.
\newblock {\em SIAM Journal on Matrix Analysis and Applications}, 17(3):465--469, 1996.



\bibitem{centrosy3}
Nicholas J. Higham and Lijing Lin.
\newblock On pth roots of stochastic matrices, 
\newblock {\em Linear Algebra and its Applications}, 435(3):448–-463, 2011.



\bibitem{Hon_SC_Wathen}
Sean Hon, Stefano Serra-Capizzano, and Andy Wathen.
\newblock {Band-Toeplitz preconditioners for ill-conditioned Toeplitz systems}.
\newblock {\em BIT Numerical Mathematics}, 62(2):465--491, 2022.


\bibitem{huang2022preconditioners}
Xin Huang, Dongfang Li, Hai-Wei Sun, and Fan Zhang.
\newblock {Preconditioners with symmetrized techniques for space fractional Cahn-Hilliard equations}.
\newblock {\em Journal of Scientific Computing}, 92(41), 2022.


\bibitem{huang2021spectral}
Xin Huang, Xue-lei Lin, Michael K. Ng, and Hai-Wei Sun.
\newblock{Spectral analysis for preconditioning of multi-dimensional Riesz fractional diffusion equations}.
\newblock{\em Numerical Mathematics: Theory, Methods \& Applications}, 15(3):565--591, 2022.

\bibitem{Huckle_SC_TP_2005}
Thomas Huckle, Stefano Serra-Capizzano, and Cristina Tablino-Possio.
\newblock {Preconditioning strategies for non-Hermitian Toeplitz linear systems}.
\newblock {\em Numerical Linear Algebra with Applications}, 12:211--220, 2005.



\bibitem{kuij}
Arno B.J. Kuijlaars and Stefano Serra-Capizzano.
\newblock {Asymptotic zero distribution of orthogonal polynomials with discontinuously varying recurrence coefficients}.
\newblock {\em Journal of Approximation Theory}, 113(1):142-–155, 2001.


\bibitem{Hon_2024}
Congcong Li and Sean Hon.
\newblock {Multilevel Tau preconditioners for symmetrized multilevel Toeplitz systems with applications to solving space fractional diffusion equations}.
\newblock {\em arXiv preprint arXiv:2407.19386}, 2024.


\bibitem{centrosy2}
Zhongyun Liu, Hui-Jiong Chen, and Han-Dong Cao. 
\newblock The computation of the principal square roots of centrosymmetric $H$-matrices, 
\newblock {\em Applied Mathematics and Computation}, 175(1):319-–329, 2006. 

\bibitem{centrosy1}
Zhongyun Liu, Yulin Zhang, and Rui Ralha.
\newblock Computing the square roots of matrices with central symmetry, 
\newblock {\em Applied Mathematics and Computation}, 186(1):715--726, 2007.





\bibitem{Meerschaert2004}
Mark M. Meerschaert and Charles Tadjeran.
\newblock {Finite difference approximations for fractional advection–dispersion flow equations}.
    \newblock {\em Journal of Computational and Applied  Mathematics}, 172(1):65--77, 2004.



\bibitem{MR2108963}
Michael~K. Ng.
\newblock {\em {Iterative methods for {T}oeplitz systems}}.
\newblock Numerical Mathematics and Scientific Computation. Oxford University
  Press, New York, 2004.



\bibitem{ngondiep}
Eric Ngondiep, Stefano Serra-Capizzano, and Debora Sesana.
\newblock {Spectral features and asymptotic properties for $g$-circulants and $g$-Toeplitz sequences}.
\newblock {\em SIAM Journal on Matrix Analysis and Applications},  31(4):1663–-1687, 2010.

\bibitem{NT_BIT}
Dimitrios Noutsos and Grigorios Tachyridis.
\newblock{Band-times-circulant preconditioners for non-symmetric
{T}oeplitz systems}.
\newblock{\em BIT Numerical Mathematics}, 62: 561--590, 2022.

\bibitem{NT_JCAM}
Dimitrios Noutsos and Grigorios Tachyridis.
\newblock{Band {T}oeplitz preconditioners for non-symmetric real {T}oeplitz systems by preconditioned GMRES method}.
\newblock{\em Journal of Computational and Applied Mathematics}, 373: 112250, 2020.



\bibitem{nega-spectr-eq}
Dimitrios Noutsos, Stefano Serra-Capizzano, and Paris Vassalos.
\newblock{Matrix algebra preconditioners for multilevel Toeplitz systems do not insure optimal convergence rate}.
\newblock{\em Theoretical Computer Science}, 315 (2/3):557–579, 2004.


\bibitem{Pestana2019}
Jennifer Pestana.
\newblock {Preconditioners for symmetrized Toeplitz and multilevel Toeplitz matrices}.
\newblock {\em SIAM Journal on Matrix Analysis and Applications}, 40(3):870-887, 2019.


\bibitem{doi:10.1137/140974213}
Jennifer Pestana and Andy J. Wathen.
\newblock {A preconditioned MINRES method for nonsymmetric Toeplitz matrices}.
\newblock {\em SIAM Journal on Matrix Analysis and Applications}, 36(1):273-288, 2015.

\bibitem{S-BIT94}
Stefano Serra-Capizzano.
\newblock Preconditioning strategies for asymptotically ill-conditioned block Toeplitz systems.
\newblock {\em BIT Numerical Mathematics}, 34(4):579–-594, 1994.


\bibitem{extr1}
Stefano Serra-Capizzano.
\newblock On the extreme spectral properties of Toeplitz matrices generated by $L^1$ functions with several minima/maxima.
\newblock {\em BIT Numerical Mathematics}, 36(1): 135–-142, 1996.


\bibitem{serra_indef}
Stefano Serra.
\newblock {Preconditioning strategies for Hermitian Toeplitz systems with nondefinite generating functions}.
\newblock {\em SIAM Journal on Matrix Analysis and Applications}, 17(4): 1007--1019, 1996.

\bibitem{extr2}
Stefano Serra-Capizzano.
\newblock On the extreme eigenvalues of Hermitian (block) Toeplitz matrices.
\newblock {\em Linear Algebra and Its Applications}, 270:109–-129, 1998.



\bibitem{serra_block1}
Stefano Serra.
\newblock {Spectral and computational analysis of block Toeplitz matrices having nonnegative definite matrix-valued generating functions}.
\newblock {\em BIT Numerical Mathematics}, 39(1): 152--175, 1999.



\bibitem{serra_block2}
Stefano Serra.
\newblock {Asymptotic results on the spectra of block Toeplitz preconditioned matrices}.
\newblock {\em SIAM Journal on Matrix Analysis and Applications}, 20(1): 31--44, 1999.



\bibitem{algebra_Serra-Capizzano_2001}
Stefano Serra-Capizzano.
\newblock {Distribution results on the algebra generated by Toeplitz sequences: a finite-dimensional approach}.
\newblock {\em  Linear Algebra and its Applications}, 328(1-3):121-–130, 2001.


\bibitem{S-nega-gen}
Stefano Serra-Capizzano.
\newblock Matrix algebra preconditioners for multilevel Toeplitz matrices are not superlinear.
\newblock {\em Linear Algebra and its Applications}, 343/344: 303-–319, 2002.


\bibitem{SeTy-nega1}
\newblock Stefano Serra-Capizzano and Eugene Tyrtyshnikov. Any circulant-like preconditioner for multilevel matrices is not superlinear.
\newblock {\em SIAM Journal on Matrix Analysis and Applications}, 21: 431--439, 1999.

\bibitem{SeTy-nega2}
Stefano Serra-Capizzano and Eugene Tyrtyshnikov.
\newblock How to prove that a preconditioner cannot be superlinear.
\newblock {\em Mathematics of Computation}, 72: 1305–-1316, 2003.

\bibitem{Strang1986}
Gilbert Strang.
\newblock {A proposal for Toeplitz matrix calculations}.
\newblock {\em Studies in Applied Mathematics}, 74(2):171--176, 1986.

\cred{
\bibitem{Tian2015}
Wenyi Tian, Han Zhou, and W. Deng.
\newblock {A class of second order difference approximations for solving space fractional diffusion equations}.
\newblock {\em Mathematics of Computation}, 84(294):1703--1727, 2015.
}

\bibitem{Tilli_Loc}
Paolo Tilli.
\newblock {Locally Toeplitz sequences: spectral properties and applications}.
\newblock {\em Linear Algebra and its Applications}, 278(1-3):91--120, 1998.


\bibitem{ANU:9672992}
Andrew~J. Wathen.
\newblock {Preconditioning}.
\newblock {\em Acta Numerica}, 24:329--376, 2015.


\end{thebibliography}

\end{document}